\numberwithin{equation}{section}
\newtheorem{theorem}{Theorem}[section] 
\newtheorem{proposition}[theorem]{Proposition} 
\newtheorem{lemma}[theorem]{Lemma} 
\newtheorem{corollary}[theorem]{Corollary} 
\theoremstyle{definition} 
\newtheorem{definition}[theorem]{Definition} 
\newtheorem{remark}[theorem]{Remark} 
\newtheorem{problem}[theorem]{Problem} 
\newtheorem{example}[theorem]{Example} 
\newcommand\Tcal{\mathcal{T}} 
\newcommand\Vcal{\mathcal{V}} 
\newcommand\Cscr{\mathscr{C}} 
\newcommand\Dscr{\mathscr{D}} 
\newcommand\Escr{\mathscr{E}}
\newcommand\Jscr{\mathscr{J}}
\newcommand\Oscr{\mathscr{O}}
\newcommand\C{\mathbb{C}}
\newcommand\N{\mathbb{N}} 
\newcommand\R{\mathbb{R}}
\newcommand\U{\mathbb{U}} 
\newcommand\Z{\mathbb{Z}}
\newcommand\igot{\mathfrak{i}}
\renewcommand\igot{\mathfrak{i}}
\renewcommand\imath{\igot}
\newcommand\hra{\hookrightarrow} 
\newcommand\lra{\longrightarrow} 
\newcommand\longhookrightarrow{\ensuremath{\lhook\joinrel\relbar\joinrel\rightarrow}} 
\newcommand\wt{\widetilde} 
\newcommand\wh{\widehat} 
\newcommand\di{\partial} 
\newcommand\dibar{\overline\partial}
\newcommand\dist{\mathrm{dist}}
\newcommand\Id{\mathrm{Id}}
\newcommand\supp{\mathrm{supp}}
\newcommand\Pic{\mathrm{Pic}}
\def\dist{\mathrm{dist}}
\def\Ell1{\mathrm{Ell_1}} 
\def\CEll1{\mathrm{CEll_1}}
\def\Jst{J_{\mathrm{st}}} 
\begin{document} 

\title{The Oka principle for tame families of Stein manifolds} 

%\author{Franc Forstneri\v c and Álfheiður Edda Sigurðardóttir}
\author{Franc Forstneri\v c and {\'A}lfhei{\dh}ur Edda Sigur{\dh}ard{\'o}ttir}

\address{Franc Forstneri\v c, Faculty of Mathematics and Physics, University of Ljubljana, Jadranska 19, SI--1000 Ljubljana, Slovenia}

\address{Franc Forstneri\v c, Institute of Mathematics, Physics and Mechanics, Jadranska 19, SI--1000 Ljubljana, Slovenia}

\email{franc.forstneric@fmf.uni-lj.si}

\address{Álfheiður Edda Sigurðardóttir, 
Institute of Mathematics, Physics and Mechanics, Jadranska 19, SI--1000 Ljubljana, Slovenia} 

%\email{franc.forstneric@fmf.uni-lj.si; edda.sigurdardottir@imfm.si}

\email{edda.sigurdardottir@imfm.si}

\subjclass[2020]{Primary 32Q56; Secondary 32E10, 32H02}

\date{29 October 2025}

\keywords{Stein manifold, Oka principle, Oka manifold, vector bundle} 

\begin{abstract}
Let $X$ be a smooth open manifold of even dimension, 
$T$ be a topological space, and $\Jscr=\{J_t\}_{t\in T}$ be a
continuous family of smooth integrable Stein structures on $X$.
Under suitable additional assumptions on $T$ and $\Jscr$, we prove 
an Oka principle for continuous families of maps from the family of 
Stein manifolds $(X,J_t)$, $t\in T$, to any Oka manifold,
showing that every family of continuous maps is homotopic
to a family of $J_t$-holomorphic maps depending continuously on $t$. 
We also prove the Oka--Weil theorem for sections of
$\Jscr$-holomorphic vector bundles on $Z=T\times X$ and
the Oka principle for isomorphism classes of such bundles.  
The assumption on the family $\Jscr$ is that the $J_t$-convex hulls 
of any compact set in $X$ are upper semicontinuous with respect 
to $t\in T$; such a family is said to be tame. 
For suitable parameter spaces $T$, we characterise 
tameness by the existence of a continuous 
family $\rho_t:X\to \R_+=[0,+\infty)$, $t\in T$, of strongly 
$J_t$-plurisubharmonic exhaustion functions on $X$. 
Every family of complex structures on an open orientable surface is tame.
We give an example of a nontame smooth family 
of Stein structures $J_t$ on $\R^{2n}$ $(t\in \R,\ n>1)$ such that 
$(\R^{2n},J_t)$ is biholomorphic 
to $\C^n$ for every $t\in\R$. We show that the 
Oka principle fails on any nontame family.
\end{abstract}

\maketitle 

\setcounter{tocdepth}{1}
\tableofcontents

%
%  32B15: (1973-now) Analytic subsets of affine space
%  32E10: Stein manifolds
%  32E30: (1973-now) Holomorphic and polynomial approximation, Runge pairs, interpolation
%  32H02: (1991-now) Holomorphic mappings, (holomorphic) embeddings and related questions
%  32M17: Automorphism groups of Cn and affine manifolds 
%  32Q56: Oka principle and Oka manifolds 
%
%  52A20 (1973-now) Convex sets in n dimensions (including convex hypersurfaces) [See also 53A07, 53C45] 
%  52A27 (1991-now) Approximation by convex sets 
%

% 
% INTRODUCTION 
% 
\section{Introduction}\label{sec:intro} 

Let $X$ be a smooth manifold of dimension $2n\ge 2$.
An almost complex structure $J$ on $X$ is an endomorphism 
$J:TX\to TX$ of its tangent bundle satisfying $J^2=-\Id$. 
When $n=1$, i.e., $X$ is a smooth surface, every such $J$ 
of local H\"older class $\Cscr^\alpha$, $0<\alpha<1$, 
determines on $X$ the structure of a Riemann surface
\cite[Theorem 5.3.4]{AstalaIwaniecMartin2009}; 
if $X$ is an open surface then $(X,J)$ is a Stein manifold 
according to Behnke and Stein \cite{BehnkeStein1949}.
In \cite{Forstneric2024Runge} the first named author
showed that, under suitable regularity assumptions on 
the parameter space $T$ and on a family $\Jscr=\{J_t\}_{t\in T}$
of complex structures on a smooth open surface $X$, 
the Oka principle holds for families
of $J_t$-holomorphic maps from $X$ to any 
Oka manifold $Y$, with continuous or smooth 
dependence on $t\in T$ and with 
approximation on suitable families of Runge subsets of $X$. 
%In particular, taking $Y=\C$, the Runge and Mergelyan approximation theorems hold for such families.
The notion of an Oka manifold 
(see \cite{Forstneric2009CR}, \cite[Sect.\ 5.4]{Forstneric2017E}, 
and \cite{Forstneric2023Indag}) developed from the classical 
Oka--Grauert--Gromov principle 
\cite{Oka1939,Grauert1958MA,Gromov1989}.

In this paper we study the mapping problem for families 
of Stein structures on smooth manifolds of dimension 
$2n\ge 4$. Integrability is then a nontrivial condition; see 
Section \ref{sec:almostcomplex}. 
However, this is not the only new issue.
The construction in \cite{Forstneric2024Runge}
strongly uses the fact that the holomorphic hull
of a compact set in a smooth surface $X$ is independent
of the choice of the complex structure on $X$.
This is no longer the case on higher dimensional manifolds.
In Theorem \ref{th:nontame} we give an example
of a smooth family of integrable Stein structures 
$\{J_t\}_{t\in\R}$ on $\R^{2n}$ for any $n>1$ such that 
$(\R^{2n},J_t)$ is biholomorphic to 
$\C^n$ for every $t\in\R$ but the $J_t$-convex hulls 
of the closed ball explode when $t\in \R\setminus\{0\}$ approaches $0$.
This phenomenon excludes the possibility of any 
reasonable analysis of global analytic problems. 
Motivated by this example, we introduce a tameness condition 
on a family of Stein structures $\{J_t\}_{t\in T}$ 
on a smooth manifold $X$ which excludes this type of pathology. 
Such a family is said to be tame if the $J_t$-convex hulls of any 
compact set in $X$ are upper semicontinuous with respect to $t\in T$;
see Definition \ref{def:tame}. 
Tameness is characterised by the existence of a continuous 
family of strongly $J_t$-plurisubharmonic exhaustion functions 
$\rho_t:X\to\R_+$; see Theorem \ref{th:tame}.
Every family of Riemann surface structures is tame. 
We give several examples of tame families 
of Stein structures on higher dimensional manifolds.
%
%Our main result, Theorem \ref{th:Oka}, is an Oka principle for maps from tame families of smooth Stein structures on a smooth manifold $X$ to an Oka manifold $Y$. The main new point is that not only the maps but also the Stein structures depend on a parameter.

The following Oka principle is a special case of our main result, 
Theorem \ref{th:Oka}. 

%
%  MAIN THEOREM (SIMPLIFIED)
%
\begin{theorem}
Assume that $T$ is a finite CW complex, $X$ is a smooth manifold, 
$\Jscr=\{J_t\}_{t\in T}$ is a tame family of smooth Stein structures on $X$
depending continuously on $t$, and $Y$ is an Oka manifold.
Then, every continuous map $f:Z=T\times X\to Y$ is homotopic
to a $\Jscr$-holomorphic map $F:Z\to Y$, i.e. such that 
$F(t,\cdotp):X\to Y$ is $J_t$-holomorphic for every $t\in T$
and continuous in $t$.
If $f$ is $\Jscr$-holomorphic on a neighbourhood
of a closed subset $K\subset Z$ with proper projection $K\to T$
and $J_t$-convex fibres $K_t$ $(t\in T)$, 
then $F$ can be chosen to approximate $f$ in
the fine topology on $K$.
\end{theorem}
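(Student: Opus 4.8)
The plan is to reduce the parametric Oka problem for the tame family $\Jscr$ to the classical parametric Oka principle for maps from a single Stein manifold into the Oka manifold $Y$, applied in a suitable ``total space'' formulation. The starting point is Theorem \ref{th:tame}: tameness of $\Jscr$ together with the assumption that $T$ is a finite CW complex gives us a continuous family $\rho_t:X\to\R_+$ of strongly $J_t$-plurisubharmonic exhaustion functions. Using this family one obtains, for each $c>0$, the sublevel sets $X_{t,c}=\{x\in X:\rho_t(x)<c\}$, which are $J_t$-convex (in fact Stein) domains exhausting $X$, and whose closures depend upper semicontinuously on $t$. The whole argument is then carried out by an induction on a handle/CW exhaustion of $X$ adapted to these functions, combined with an induction over the skeleta of $T$, exactly as in the proof of the one-parameter case in \cite{Forstneric2024Runge}, but now with the extra integrability bookkeeping forced by $\dim X\ge 4$.

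The key steps I would carry out, in order, are the following. First, \emph{normalise the data}: replace $f$ by a homotopic map that is smooth, and — after shrinking — $\Jscr$-holomorphic on an open neighbourhood of the given compact-over-$T$ set $K$ with $J_t$-convex fibres $K_t$; this uses a parametric Mergelyan-type approximation on $K$, which is available because each $K_t$ is $J_t$-convex and the structures vary tamely. Second, \emph{build a good exhaustion}: choose an increasing sequence of constants $c_1<c_2<\cdots\to\infty$ and set $L_j=\{(t,x):\rho_t(x)\le c_j\}\subset Z$, arranging that $K\subset \mathrm{int}(L_1)$ and that passing from $L_j$ to $L_{j+1}$ involves only finitely many ``elementary'' modifications — either a parametric no-change (noncritical) step, where $L_{j+1}$ deformation retracts onto $L_j$ fibrewise and one extends a $\Jscr$-holomorphic map by a parametric Oka--Weil/Cartan argument on the sublevel sets of $\rho_t$, or a parametric critical (handle-attachment) step, where one first extends the map continuously over the new handle and then applies the parametric Oka principle on a Stein neighbourhood to make it $\Jscr$-holomorphic, keeping it fixed near $L_j$. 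Third, \emph{run the convergent induction}: at stage $j$ one has $F_j$ defined and $\Jscr$-holomorphic near $L_j$, agreeing with $f$ up to a small homotopy on $K$; one produces $F_{j+1}$ near $L_{j+1}$ that approximates $F_j$ on $L_j$ in the fine topology and is homotopic to it rel (a neighbourhood of) $K$; choosing the approximations to decay fast enough, $F_j\to F$ converges to a $\Jscr$-holomorphic map $F:Z\to Y$ with the required properties, and concatenating the homotopies (again with fast decay) yields a homotopy from $f$ to $F$. The approximation on $K$ is preserved because nothing near $K$ is moved after the first stage, except by controllably small amounts.

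The technical engine at each elementary step is the parametric Oka property of $Y$ (valid since $Y$ is Oka) applied to maps from the Stein manifold $(X_{t,c_{j+1}},J_t)$, or rather from a Stein neighbourhood in the total space: one needs the statement that for a Stein manifold $S$, a compact $\Oscr(S)$-convex subset $A$, a finite CW pair of parameter spaces $(P,P_0)$, and a continuous family of maps $S\to Y$ that are holomorphic near $A$ for $p\in P$ and holomorphic on all of $S$ for $p\in P_0$, one can deform the family (rel $P_0$, with approximation near $A$) to a family of holomorphic maps. Here the parameter is $t\in T$ (possibly together with an auxiliary interval from the homotopy), and $S$ is a fixed sublevel domain equipped with the varying complex structure; the point is that for fixed $c$ and $t$ near a given $t_0$, the domains $X_{t,c}$ and complex structures $J_t$ can be simultaneously trivialised by a continuous family of diffeomorphisms onto a fixed Stein model, reducing to the standard parametric Oka principle with a continuously varying integrable structure on a fixed Stein manifold — and the latter, for a \emph{single} varying Stein structure on a fixed domain, is precisely the kind of statement proved by the Cartan/Oka--Weil methods underlying Theorem \ref{th:Oka}.

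The main obstacle I expect is the \textbf{critical (handle) step in the parametric setting}: when $\rho_t$ has a (possibly degenerate, and $t$-dependent) critical point and $X_{t,c}$ changes topology, one must simultaneously (i) extend the $\Jscr$-holomorphic family continuously over the attached handle for all $t\in T$ compatibly with the CW structure on $T$, (ii) ensure the enlarged fibres remain $J_t$-convex so that the holomorphic approximation applies, and (iii) keep everything rigid near $K$. Tameness is exactly what makes (ii) possible — it prevents the $J_t$-hulls from jumping as $t$ varies through the critical value — but organising the handle attachments so that the combinatorial data (which handles, in which order, over which part of $T$) is locally constant in $t$, and gluing the local trivialisations over a cover of $T$ subordinate to its CW structure, is where the real care is needed. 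This is also the step where the hypothesis that $T$ is a finite CW complex (rather than an arbitrary topological space) is essential, both for the handle bookkeeping and for the validity of the parametric Oka principle for $Y$.
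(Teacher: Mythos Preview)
Your broad inductive shape (exhaust $Z$ by $\Jscr$-convex sublevel sets of a family $\rho_t$, approximate on each, pass to the limit) matches the paper, but you have inverted the difficulty. The paper does \emph{not} use any Morse theory on $\rho_t$ and has no ``critical step'': since the initial map $f$ is already defined continuously on all of $Z$, the passage from $K^{j-1}$ to $K^j$ is a pure approximation problem on a $\Jscr$-convex set, with no handle to extend over. The induction in the paper runs over a cover of $T$ by small sets $P_0\subset P_1\subset P$; on each such piece one applies Lemma~\ref{lem:main} (the fixed-structure parametric Oka principle from \cite{Forstneric2024Runge}) and glues the resulting local homotopies by cut-off in the homotopy parameter $s$. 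Your ``main obstacle'' --- $t$-dependent critical points, orderings of handles, local constancy of the combinatorics over the skeleta of $T$ --- is a self-imposed complication that the paper avoids entirely, and which would be genuinely delicate to carry out as you describe.

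Conversely, the step you treat as routine is the one that carries all the weight. Your sentence ``the domains $X_{t,c}$ and complex structures $J_t$ can be simultaneously trivialised by a continuous family of diffeomorphisms onto a fixed Stein model'' is exactly the parametric Hamilton theorem (Theorem~\ref{th:Hamilton}), which in turn rests on the Greene--Krantz stability of Kohn's $\dibar$-Neumann solution (Theorem~\ref{th:dibar}). This is the technical heart of the argument in dimension $\ge 4$; without it there is no mechanism to pass from the varying $J_t$ to a fixed $J_{t_0}$ so that the classical parametric Oka principle applies. Your appeal to ``Cartan/Oka--Weil methods underlying Theorem~\ref{th:Oka}'' at this point is circular, since the statement you are proving is that theorem's special case. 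In short: drop the handle bookkeeping, and instead make explicit the Hamilton reduction $\Phi_t:(\Omega,J_t)\to(\Phi_t(\Omega),J_{t_0})$ followed by Lemma~\ref{lem:main} on the resulting fixed-structure problem --- that is the paper's proof.
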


The special case when $Y$ is the complex number field
$\C$ is the Oka--Weil theorem for such families; see
Theorem \ref{th:OkaWeil}. We show 
in Corollary \ref{cor:extension} that the Oka principle 
fails on any nontame family, so tameness is a necessary and
sufficient condition for the Oka principle. 
The Oka--Weil theorem is also proved for sections of fibrewise holomorphic 
vector bundles on tame families of Stein structures; see
Theorem \ref{th:OkaWeilB}. This is used to obtain
global solutions of the $\dibar$-equation for fibrewise 
smooth $(p,q)$-forms in all bidigrees, 
see Theorem \ref{th:dibarGlobal}. We also prove the Oka principle 
for the classification of complex vector bundles on such families,
extending the classical results of Oka \cite{Oka1939} and Grauert
\cite{Grauert1958MA}; see Theorems \ref{th:OPLB} and 
\ref{th:OPvectorbundles} . 
Our results open a new direction in modern Oka theory. 

An important ingredient in the proofs %of our main results  
is a theorem of Hamilton \cite{Hamilton1977},  
also called the global Newlander--Nirenberg theorem, 
on representing small integrable deformations of the
complex structure on the closure of a smoothly 
bounded, relatively compact, strongly pseudoconvex domain 
$\Omega$ in a Stein manifold $X$ 
by small smooth deformations of $\Omega$ in $X$.
We need a version of this result with continuous dependence on
parameters; see Theorem \ref{th:Hamilton}, which 
is obtained from the proof of Hamilton's theorem 
by Greene and Krantz \cite[Theorem 1.13]{GreeneKrantz1982}.
%(Their result pertains to more general domains, see \cite[Theorem 1.13']{GreeneKrantz1982}.) 
Unlike the original proof and its improvements 
%by Gan and Gong \cite{GanGong2024} and Gong and Shi \cite{GongShi2024},
\cite{GanGong2024,GongShi2024}, which use the Nash--Moser technique, 
the proof in \cite{GreeneKrantz1982} 
is based on stability of the canonical (Kohn) solution of the $\dibar$-equation 
with respect to perturbations of the complex structure, obtained in 
\cite[Theorem 3.10]{GreeneKrantz1982} by following the pioneering work 
of Kohn \cite{Kohn1963,Kohn1964} on the $\dibar$-Neumann problem. 
A special case of Hamilton's theorem with parameters for smoothly 
bounded domains in Riemann surfaces, and under considerably lower regularity assumptions on the family of complex structures, 
was obtained by the first named author in 
\cite[Theorem 4.3]{Forstneric2024Runge} using the Beltrami equation.

%
%
%  ALMOST COMPLEX STRUCTURES
%
%
\section{Almost complex structures and integrability}
\label{sec:almostcomplex}

In this section we recall the relevant background concerning 
almost complex structures. 

Let $X$ be a smooth manifold of real dimension $2n$. 
An almost complex structure $J$ on $X$ is an endomorphism 
$J:TX\to TX$ of its tangent bundle satisfying $J^2=-\Id$. 
Every point $x_0\in X$ has an open coordinate 
neighbourhood $U\subset X$ such that $TX|_U\cong U\times \R^{2n}$
and $J:TX|_U\to TX|_U$ is given by $(x,\xi)\mapsto (x,A(x)\xi)$, 
where the matrix $A(x)\in GL_{2n}(\R)$ satisfies $A(x)^2=-I$ 
with $I\in  GL_{2n}(\R)$ the identity matrix. 
We say that $J$ is of class $\Cscr^k$ if its matrix $A(x)$ 
in any smooth local coordinate on $U\subset X$ is a $\Cscr^k$ map
$U\to GL_{2n}(\R)$. Similarly one defines (local) H\"older classes
$\Cscr^{(k,\alpha)}$ with $k\in \Z_+$ and $0<\alpha<1$;
see \cite[Sect. 4.1]{GilbargTrudinger1983}.
An almost complex structure $J$ extends to an endomorphism of the 
complexified tangent bundle $\C TX=TX \otimes_\R \C$. 
Since $J_x^2=-\Id$ holds for every $x\in X$,
$J$ induces a decomposition $\C TX=H\oplus \overline H$ 
into a direct sum of complex subbundles of rank $n$ 
whose fibres $H_x$ and $\overline H_x$ over $x\in X$ are, 
respectively, the $+\imath=\sqrt{-1}$ and $-\imath$ eigenspaces of 
$J_x$ on $\C T_x X$. This gives complex vector bundle projections
$\pi_{1,0}:\C TX\to H$ and $\pi_{0,1}:\C TX\to \overline H$ satisfying 
\begin{equation}\label{eq:projections}
	\pi_{1,0}=\bar \pi_{0,1},
	\quad
	\pi_{1,0}+\pi_{0,1}=\Id,
	\quad
	\pi_{1,0}\circ \pi_{0,1}=0=\pi_{0,1}\circ \pi_{1,0}.
\end{equation}
Conversely, a pair of such projections determines an almost complex 
structure $J$ on $X$. Note that $\pi_{1,0}$ and $\pi_{0,1}$ are as smooth 
as $J$. An almost complex structure $J$ of class $\Cscr^1$ on $X$ is 
said to be (formally) {\em integrable} if the subbundle $H=\pi_{1,0}(\C TX)$
satisfies the commutator condition $[H,H]\subset H$ for its sections, 
which are called vector fields of type $(1,0)$. Every such vector field
is of the form $v-\imath Jv$ where $v$ is a real vector field on $X$.
If $n=1$ then the commutator condition is void, but 
integrability is a nontrivial condition when $n\ge 2$. 
For later reference, we state the following precise version of 
the Newlander--Nirenberg integrability theorem.

%
% INTEGRABLE COMPLEX STRUCTURES
%
\begin{theorem}\label{th:integrable}
If $X$ is a smooth manifold of dimension $2n$ and $J$ is an integrable 
almost complex structure on $X$ of local H\"older class $\Cscr^{(k,\alpha)}$, 
with $k\ge 1$ an integer (or $k\ge 0$ when $X$ is a surface) 
and $0<\alpha<1$,  then every point $x_0\in X$ has a neighbourhood $U\subset X$ with a $J$-holomorphic coordinate map $z:U\to\C^n$ 
of class $\Cscr^{(k+1,\alpha)}$. Thus, $(X,J)$ is a complex manifold,
and the smooth structure on $X$ determined by $J$ is 
$\Cscr^{(k+1,\alpha)}$ compatible with the given smooth structure.
\end{theorem}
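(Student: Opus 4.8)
The plan is the classical strategy for the Newlander--Nirenberg theorem: deform $J$ to the standard structure on a small ball, reformulate the existence of $J$-holomorphic coordinates as a nonlinear $\dibar$-equation, and solve it by iteration; the delicate point will be the \emph{sharp} H\"older regularity. First I fix $x_0\in X$, choose a smooth chart sending a neighbourhood of $x_0$ to a ball in $\R^{2n}\cong\C^n$ with $x_0\mapsto 0$, and compose it with a real-linear map so that $J$ agrees with the standard structure $\Jst$ at $0$ (possible since any two complex structures on $\R^{2n}$ are $GL_{2n}(\R)$-conjugate). Rescaling $w\mapsto\lambda w$ and shrinking, one arranges that $\|J-\Jst\|_{\Cscr^{(k,\alpha)}(\overline B)}<\epsilon$ on a ball $B=B(0,r)$, with $\epsilon$ and $r$ at our disposal. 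On $B$, a $\Cscr^1$ function $f$ is $J$-holomorphic exactly when $\dibar_J f:=(df)^{0,1}=0$, the $(0,1)$-component of $df$ with respect to $J$; writing $\dibar$ for $\dibar_{\Jst}$ we have $\dibar_J=\dibar+R$, where $R$ is a first-order operator with coefficients of size $O(\|J-\Jst\|)$.

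Next I look for coordinates of the form $z_j=w_j+u_j$ ($j=1,\dots,n$) with $u=(u_1,\dots,u_n)$ small; since $dz_j(0)=dw_j(0)$, any such $u$ automatically yields a genuine coordinate system near $0$. The system $\dibar_J z_j=0$ becomes the fixed-point problem
\[
\dibar u=\Ncal(u):=-\dibar_J w-R\,u ,
\]
in which the inhomogeneous term $-\dibar_J w$ and the coefficients of $R$ are $O(\|J-\Jst\|)$, and $\Ncal$ depends on $u$ only through $du$. The essential structural input is that formal integrability of $J$ --- the hypothesis $[H,H]\subset H$, equivalently $\dibar_J\circ\dibar_J=0$ on forms --- is precisely what makes the right-hand side produced at each stage of the iteration satisfy the compatibility ($\dibar$-closedness) condition needed to solve the inhomogeneous $\dibar$-equation on the ball.

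To solve the fixed-point problem I use a homotopy operator $T$ for $\dibar$ on the ball (of Bochner--Martinelli--Koppelman / Henkin type): on a ball of small radius $T$ has small operator norm in the relevant function spaces and $\Ncal$ is $O(\epsilon)$-Lipschitz, so $\Ncal\circ T$ is a contraction, producing a fixed point $u$ and a $J$-holomorphic chart $z\colon U\to\C^n$. The main obstacle is the \emph{sharp} regularity $\Cscr^{(k,\alpha)}\to\Cscr^{(k+1,\alpha)}$: the elementary $\dibar$-solution operator on a strictly convex domain gains only half a derivative in the isotropic H\"older scale, so a naive contraction does not close at the right regularity. One remedies this either by running the correction as a rapidly convergent Newton/Nash--Moser scheme, absorbing the half-derivative loss into the quadratic smallness of the error at each step (Webster's integral-formula proof), or by using a homotopy operator tailored to this problem; in either case one first obtains a $\Cscr^{(1,\beta)}$ chart and then bootstraps in the now-linear equation $\dibar_J z=0$ by interior Schauder estimates to reach $\Cscr^{(k+1,\alpha)}$. (Mere existence, with $\Cscr^\infty$ regularity, also follows from Kohn-type $L^2$/$\dibar$-Neumann estimates in the spirit of \cite{GreeneKrantz1982}; see \cite{GanGong2024,GongShi2024} for modern sharp-regularity techniques for such problems.) When $n=1$ the commutator condition is vacuous and $\dibar_J$ is, up to a nonvanishing factor, the Beltrami operator $\di_{\bar w}-\mu\,\di_w$ with $\mu\in\Cscr^{(k,\alpha)}(\overline B)$ and $\sup_{\overline B}|\mu|<1$ --- which is why $k\ge0$ is admissible here --- so one invokes instead the measurable Riemann mapping theorem and H\"older regularity for the Beltrami equation (see \cite{AstalaIwaniecMartin2009}), normalised so that $z_w(0)\ne0$.

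Finally, the charts obtained at the various points of $X$ form a holomorphic atlas, since a transition map between two $J$-holomorphic charts is a biholomorphism between open subsets of $\C^n$, in particular $\Cscr^\infty$. Hence $(X,J)$ is a complex manifold, and since each chart is a $\Cscr^{(k+1,\alpha)}$ diffeomorphism with respect to the original smooth structure on $X$, the holomorphic atlas is $\Cscr^{(k+1,\alpha)}$-compatible with it. In sum, the conceptual content --- deform, solve a nonlinear $\dibar$-equation, assemble an atlas --- is routine; the genuine difficulty is extracting the sharp one-derivative H\"older gain in dimension $2n\ge4$, where the model $\dibar$-operator fails to gain a full derivative.
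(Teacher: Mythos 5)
The paper does not actually prove Theorem \ref{th:integrable}: it is stated ``for later reference'' as a precise form of the Newlander--Nirenberg theorem, with the result for $n>1$ attributed to Newlander--Nirenberg and its sharp-regularity refinements to Nijenhuis--Woolf, Kohn, Malgrange, Webster and Treves, the surface case to Korn, Lichtenstein, Chern and \cite[Theorem 5.3.4]{AstalaIwaniecMartin2009}, and the final compatibility assertion to the fact (Norton, Bojarski et al.) that the inverse of a $\Cscr^{(k,\alpha)}$ diffeomorphism, $k\ge 1$, is again of class $\Cscr^{(k,\alpha)}$. Your sketch is a correct and well-organised outline of the standard strategy behind those references (normalise $J$ at a point, rescale to make $J-\Jst$ small, recast $J$-holomorphic coordinates as a perturbed $\dibar$-system, iterate), and you correctly identify where the real difficulty lies. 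But note that at exactly that point --- producing the fixed point with the sharp gain $\Cscr^{(k,\alpha)}\to\Cscr^{(k+1,\alpha)}$, or even just a $\Cscr^{(1,\beta)}$ solution to start the bootstrap --- you also fall back on citing Webster's rapidly convergent scheme and the Nash--Moser circle of ideas without carrying out any estimates. So your text is not materially more of a proof than the paper's citation; it is a proof \emph{plan} whose hardest step is outsourced. Two smaller points: the way integrability enters is not quite ``the right-hand side is $\dibar$-closed at each stage''; in the actual arguments one uses the homotopy formula and shows that the error term $T\dibar g$ is quadratically small because the Nijenhuis tensor vanishes, or one works directly with the $\dibar_J$-complex. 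This should be stated more carefully if the sketch is to be turned into a proof.

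The one step you assert without justification that genuinely needs an input is the last one. Knowing that the chart $z:U\to\C^n$ is of class $\Cscr^{(k+1,\alpha)}$ with invertible differential does not by itself give $\Cscr^{(k+1,\alpha)}$ compatibility of the two smooth structures: for that you need the \emph{inverse} $z^{-1}$ to be of class $\Cscr^{(k+1,\alpha)}$ as well, and stability of H\"older classes under inversion of diffeomorphisms is not formal (it is precisely the reason the paper invokes Norton \cite{Norton1986} and \cite[Theorem 2.1]{BojarskiAll2005}, and it is where the hypothesis $k\ge 1$ is used). Your sentence ``since each chart is a $\Cscr^{(k+1,\alpha)}$ diffeomorphism \dots the holomorphic atlas is $\Cscr^{(k+1,\alpha)}$-compatible'' silently assumes this.
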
 

This result has a complex genesis. For surfaces ($n=1$),
see Korn \cite{Korn1914}, Lichtenstein \cite{Lichtenstein1916},
Chern \cite{Chern1955}, and Astala et al. %, Iwaniec, and Martin 
\cite[Theorem 5.3.4]{AstalaIwaniecMartin2009}.
For $n>1$ the result is due to Newlander and Nirenberg 
\cite{NewlanderNirenberg1957} under stronger regularity
assumptions. Improvements were given 
by Nijenhuis and Woolf \cite{NijenhuisWoolf1963},
Kohn \cite[Theorem 12.1]{Kohn1963}, 
Malgrange \cite{Malgrange1969}, 
Webster \cite[Theorem 3.1]{Webster1989}, 
Treves \cite{Treves1992}, and possibly others.
(See also Nirenberg \cite{Nirenberg1973} and 
H\"ormander \cite[Sect.\ 5.7]{Hormander1990}.) 
The last statement concerning the compatibility
of smooth structures follows from the fact 
that the inverse of a diffeomorphism of local H\"older class 
$\Cscr^{(k,\alpha)}$ with $k\ge 1$ is of the same class; 
see Norton \cite{Norton1986} and 
Bojarski et al.\ \cite[Theorem 2.1]{BojarskiAll2005}.
A 1-parametric version of the Newlander--Nirenberg 
theorem was proved by Gong \cite{Gong2020}.

Denote by $\Lambda^l(\C T^*X)$ the $l$-th exterior power of 
the complexified cotangent bundle $\C T^*X$.
Its sections are complex differential $l$-forms on $X$. 
The projections $\pi_{1,0}$ and $\pi_{0,1}$ in \eqref{eq:projections}
give rise to projections 
$\pi_{p,q}:\Lambda^l(\C T^*X)\to \Lambda^{p,q}(\C T^*X)$
onto complex vector subbundles of $\Lambda^l(\C T^*X)$
for $0\le p,q\le n$, with $p+q=l \in \{1,\ldots,2n\}$, such that 
$\oplus_{p+q=l} \Lambda^{p,q}(\C T^*X) = \Lambda^l(\C T^*X)$.
Sections of $\Lambda^{p,q}(\C T^*X)$ are 
differential forms of bidegree $(p,q)$ with respect to the complex 
structure $J$ on $X$. Assuming that $J$ is of class 
$j\in\{0,1,\ldots,\infty\}$, these subbundles are also of class $\Cscr^j$. 
Let $\Dscr^{p,q}_j(X)$ denote the space of $(p,q)$-forms of class 
$\Cscr^j$ on $X$, with $\Dscr^{p,q}(X)=\Dscr^{p,q}_\infty(X)$, 
and let $d$ be the exterior differential on $X$.
We have the operators $\di=\di_J$ and $\dibar=\dibar_J$ defined by 
\[
	\di=\pi_{p+1,q}\circ d: \Dscr^{p,q}_j(X)\to \Dscr^{p+1,q}_{j-1}(X),
	\quad
	\dibar=\pi_{p,q+1}\circ d:\Dscr^{p,q}_j(X)\to \Dscr^{p,q+1}_{j-1}(X).
\]
Integrability of $J$ is equivalent to each of the conditions 
$\di^2=0$, $\dibar^2=0$, and $d=\di+\dibar$ 
(see \cite[Proposition 1.2.1]{FollandKohn1972}).
If $J$ is integrable then the kernel of the operator $\dibar$ (resp.\ $\di$) 
on functions are precisely the $J$-holomorphic 
(resp.\ the $J$-antiholomorphic) functions.
We also have the conjugate differential $d^c=d^c_J=\imath(\dibar-\di)$ 
and the operator $dd^c=2\imath \di\dibar$.
% whose $n$-th power $(dd^c)^n$ is the complex Monge--Amp\`ere operator
%associated to $J$. 
For a $\Cscr^2$ function $\rho:X\to\R$, 
$dd^c\rho$ is a $(1,1)$-form called the {\em Levi form} of $\rho$.
A function $\rho$ is said to be (strongly) {\em $J$-plurisubharmonic} if 
$dd^c\rho\ge 0$ (resp.\ $dd^c\rho > 0$), in the sense that 
for any $x\in X$ and $0\ne v\in T_x X$ we have 
$\langle dd^c\rho(x),v\wedge Jv\rangle \ge 0$ (resp.\ $>0$);
see \cite[Eq. (1.39), p.\ 30]{Forstneric2017E}.
A complex manifold $(X,J)$ is a Stein manifold if and only if it admits 
a strongly $J$-plurisubharmonic exhaustion function $\rho:X\to\R_+$
(see Grauert \cite{Grauert1958AM}).
A necessary and sufficient topological condition for the existence of 
an integrable Stein structure on a smooth manifold 
of dimension $2n\ge 6$ was given by Eliashberg
\cite{Eliashberg1990,CieliebakEliashberg2012}. The situation 
is more complicated on manifolds of dimension $4$;
see Gompf \cite{Gompf1998,Gompf2005} and 
\cite[Chap.\ 10]{Forstneric2017E}.

A domain $D\Subset X$ with $\Cscr^2$ boundary is said to be 
{\em strongly pseudoconvex}, or {\em strongly $J$-pseudoconvex} 
if we wish to emphasise the choice of the complex structure $J$,  
if it admits a defining function $\rho:U\to\R$ on a neighbourhood $U$ 
of $\bar D$ such that $D=\{\rho<0\}$, $d\rho\ne 0$ 
on $bD=\{\rho=0\}$, and $dd^c\rho(x)>0$ for every $x\in bD$. 
See Krantz \cite{Krantz2001} for the basic theory of such domains.

Fix a smooth Riemannian metric $g$ on $X$.
Such $g$ extends to a field of $\C$-bilinear forms on the complexified
tangent spaces $\C T_xX$, $x\in X$. Given an almost complex 
structure $J$ on $X$ determined by the projections \eqref{eq:projections}, 
write a vector $u\in \C TX$ in the form
$u=u_{1,0}+u_{0,1}$ where $u_{1,0}=\pi_{1,0}(u)$ and $u_{0,1}=\pi_{0,1}(u)$.
Then, $g$ and $J$ determine a field of inner products on 
the fibres of $\C TX$ by
\begin{equation}\label{eq:inner}
	\langle u,v\rangle_{\! J}
	= g(u_{1,0},\overline{v_{1,0}}) + g(u_{0,1},\overline{v_{0,1}}),	
	\quad u,v\in \C T_xX,\ x\in X
\end{equation}
(cf.\ \cite[p.\ 8]{FollandKohn1972}). 
This inner product is $J$-hermitian on the subbundle $H\subset \C TX$ 
on which $J=\imath$, $J$-antihermitian on 
the conjugate subbundle $\overline H\subset \C TX$ 
on which $J=-\imath$, the subbundles $H$ and $\overline H$ 
are $\langle \cdotp,\cdotp\rangle_J$-orthogonal, and 
for every $u\in \C TX$ we have
\[
	\|u\|^2=\langle u,u\rangle_{\! J} =
	\|\Re u_{1,0}\|_g^2 + \|\Im u_{1,0}\|_g^2 
	+ \|\Re u_{0,1}\|_g^2 + \|\Im u_{0,1}\|_g^2. 
\] 
Here, $\Re$ and $\Im$ denote the real and imaginary part.
By duality and multilinear algebra, the field of inner products
$\langle \cdotp,\cdotp\rangle_{\! J}$  in 
\eqref{eq:inner} extends to the bundles $\Lambda^{p,q}(\C T^*X)$.
If $J$ is of class $\Cscr^j$ then so is 
$\langle \cdotp,\cdotp\rangle_{\! J}$,
and if an almost complex structure $J'$ on $X$ is $\Cscr^j$ close to $J$ 
then $\langle \cdotp,\cdotp\rangle_{\! J'}$ is $\Cscr^j$ close to
$\langle \cdotp,\cdotp\rangle_{\! J}$. 
Given a domain $D\Subset X$ with $\Cscr^1$ boundary, 
we have an inner product of forms $\phi,\psi\in \Dscr^{p,q}_j(\bar D)$ 
given by
\begin{equation}\label{eq:Jproduct}
	(\phi,\psi)_J = \int_D \langle \phi,\psi \rangle_{\! J}\, dV
\end{equation}
where $dV$ is the volume form on $X$ determined by $g$. 
If $\{J_t\}_{t\in T}$ is a continuous family of almost complex 
structures on $X$ then the inner products $(\cdotp,\cdotp)_{J_t}$ also vary  
continuously, and the $L^2$ norms $\|\phi\|_{J_t}^2 = (\phi,\phi)_{J_t}$ 
are comparable for $t$ in any compact subset of $T$.
%For an integrable $J$, one can use the inner products \eqref{eq:Jproduct} to introduce the $\dibar$-complex and the $\dibar$-Neumann operator on $\bar D$; see Kohn \cite{Kohn1963,Kohn1964} and Folland and Kohn \cite{FollandKohn1972}. 

We shall be dealing with families $\Jscr=\{J_t\}_{t\in T}$ of integrable 
complex structures on given smooth manifold $X$, where $T$ is 
a topological space whose precise properties will be specified in 
the individual results. A continuous map $f:Z=T\times X\to Y$ to a complex
manifold $Y$ is said to be {\em $\Jscr$-holomorphic} if the map 
$f(t,\cdotp):X\to Y$ is $J_t$-holomorphic for every $t\in T$.
Such a family $\Jscr$ is said to be of class $\Cscr^{0,k}$, 
where $k\in \{0,1,\ldots,\infty\}$, if $J_t$ admits partial
derivatives of order up to $k$ in the space variable $x\in X$ 
and these derivative depend continuously on $t\in T$.
If $0<\alpha<1$, we say that $J_t$ is of local 
H\"older class $\Cscr^{(k,\alpha)}$ on $X$ 
and depends continuously on $t$ if for every 
smoothly bounded relatively compact domain $\Omega\Subset X$, 
$J_t|_{T\Omega} \in \mathrm{Hom}^{(k,\alpha)}(T\Omega,T\Omega)$
depends continuously on $t\in T$ as an element of this space. 
The analogous definition applies to functions or maps 
on $T\times X$. If $T$ is a $\Cscr^l$ manifold then a function 
is of class $\Cscr^{l,k}(T\times X)$ if it has
$l$ derivatives in $t\in T$ followed by $k$ derivatives in $x\in X$,
and these derivatives are continuous.  Similarly one defines
the H\"older classes $\Cscr^{l,(k,\alpha)}$.

%
%
%  HAMILTON
%
%
\section{A theorem of Hamilton for families of complex structures}
\label{sec:Hamilton}

The main result of this section, Theorem \ref{th:Hamilton},
is a version of Hamilton's theorem \cite{Hamilton1977}
(also called the global Newlander--Nirenberg theorem) 
for a family of smooth integrable complex structures
on a compact strongly pseudoconvex domain in a Stein manifold.
It is used in the proof of all main results in the paper.
Its proof uses stability of Kohn's solution of the
$\dibar$-equation on such domains, obtained by 
Greene and Krantz \cite{GreeneKrantz1982} and based
on the work of Kohn \cite{Kohn1963,Kohn1964}; 
see Theorem \ref{th:dibar}.

Assume that $(X,J)$ is a Stein manifold and $D\Subset X$ is 
a relatively compact, smoothly bounded, strongly $J$-pseudoconvex 
domain. A theorem of Hamilton \cite{Hamilton1977} %,Hamilton1979}
says that for every sufficiently small smooth integrable 
deformation $J'$ of the complex structure $J$ on 
$\bar D$ there is a smooth diffeomorphism $F:\bar D\to F(\bar D)\subset X$,  
close to the identity map on $\bar D$, such that $J'=F^*J$ is the pullback 
of $J$ by $F$. Equivalently, the map $F:D\to F(D)$ is biholomorphic
from $(D,J')$ onto $(F(D),J)$. 
(Hamilton's result applies to a wider class of domains
but we shall restrict the attention to this case.)  
The proof in \cite{Hamilton1977} 
is nonlinear in nature and uses the Nash--Moser technique.
Improvements in terms of the required regularity 
of the almost complex structure and of the boundary of the domain 
were obtained by Gan and Gong \cite{GanGong2024}, Shi \cite{Shi2025} 
(for strongly pseudoconvex domains in $\C^n$),  
and Gong and Shi \cite{GongShi2024}.
It was shown by Hill \cite{Hill1989} that the result 
fails in general on domains with Levi degenerate boundaries.

A simpler proof of Hamilton's theorem on strongly pseudoconvex
domains was given 
by Greene and Krantz \cite[Theorem 1.13]{GreeneKrantz1982} by  
using the $\dibar$--Neumann method of Kohn 
\cite{Kohn1963,Kohn1964,FollandKohn1972}
for solving the $\dibar$-equation.
Their approach, together with stability results 
for Kohn's solutions of the $\dibar$-equation
with respect to a family of complex structures 
(see \cite[Sect.\ 3]{GreeneKrantz1982} and Theorem \ref{th:dibar}), 
will be used to give the following parametric version
of Hamilton's theorem. 

%
%  Hamilton's theorem with parameters
%
\begin{theorem}\label{th:Hamilton}
Let $(X,J)$ be a Stein manifold of complex dimension $n$.
Let $k\ge 1$ and $r\ge 2k+2n+9$ be integers, 
and let $D\Subset X$ be a relatively compact, 
strongly pseudoconvex domain with boundary $bD$ of class
$\Cscr^r$. Assume that $T$ is a topological space and 
$\Jscr=\{J_t\}_{t\in T}$ is a family of integrable 
complex structures on $\bar D$ of class $\Cscr^{0,r}(\bar D)$ 
such that for some $t_0\in T$, $J_{t_0}$ is the restriction of $J$ to $\bar D$. 
Then there exist a neighbourhood $T_0\subset T$ of $t_0$ and a 
family of diffeomorphisms $F_t:D\to D_t=F_t(D)\subset X$ 
in $\Cscr^k(D,X)$, depending continuously on $t\in T_0$,  
such that $F_t$ is a biholomorphic map from $(D,J_t)$
onto $(D_t,J_{t_0})$ for every $t\in T_0$ and 
$F_{t_0}$ is the identity on $D$. If $bD\in \Cscr^\infty$ and 
$\Jscr$ is of class $\Cscr^{0,\infty}(T\times \bar D)$
then the family $\{F_t\}_{t\in T_0}$ can be chosen 
to be of class $\Cscr^{0,\infty}$ on $T_0\times \bar D$.
\end{theorem}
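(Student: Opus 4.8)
The plan is to deduce Theorem~\ref{th:Hamilton} from the parametric stability of Kohn's canonical solution of the $\dibar$-equation (Theorem~\ref{th:dibar}) by following the Greene--Krantz argument \cite{GreeneKrantz1982} and keeping track of continuous, and in the smooth case $\Cscr^{0,\infty}$, dependence on the parameter $t$. First I would set up coordinates: since $(X,J)$ is Stein, embed a neighbourhood of $\bar D$ holomorphically into some $\C^N$ and fix the coordinate functions $z_1,\dots,z_N$, which are $J$-holomorphic. For each $t$, the structure $J_t$ differs from $J=J_{t_0}$ by a $(0,1)$-form valued endomorphism of the $(1,0)$-tangent bundle; since $\Jscr\in\Cscr^{0,r}(\bar D)$ and $J_{t_0}=J$, there is a neighbourhood $T_0$ of $t_0$ on which this deformation tensor, call it $\mu_t$, is small in $\Cscr^r$ and depends continuously on $t\in T_0$, with $\mu_{t_0}=0$.

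Next I would construct the map $F_t$ componentwise: seek $F_t=(f_t^1,\dots,f_t^N):\bar D\to\C^N$ with each $f_t^j$ a $J_t$-holomorphic function close to $z_j$. Writing $f_t^j=z_j+u_t^j$, the condition $\dibar_{J_t}f_t^j=0$ becomes $\dibar_J u_t^j = -\dibar_{J_t}z_j + (\text{lower-order terms linear in }\mu_t\text{ and }du_t^j)$, a $(0,1)$-form on $\bar D$ with respect to $J$ whose $\dibar_J$-closedness follows from integrability of $J_t$; see the discussion after Theorem~\ref{th:integrable}. Applying the bounded linear Kohn solution operator $S_t$ (really $S_{t_0}$ for the fixed structure $J$, but one must iterate because the right-hand side depends on $J_t$ and on $u_t$) gives a fixed-point equation $u_t = \Phi_t(u_t)$ in a ball of a suitable $\Cscr^{k}$-type Banach space. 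Smallness of $\mu_t$ makes $\Phi_t$ a contraction, uniformly for $t\in T_0$ after shrinking $T_0$, so the unique fixed point $u_t$ exists, satisfies $u_{t_0}=0$, and depends continuously on $t$ because $\Phi_t$ does — here the continuous (resp.\ $\Cscr^{0,\infty}$) parameter dependence of Kohn's solution from Theorem~\ref{th:dibar} is exactly what propagates to $u_t$, and the regularity loss in the hypothesis $r\ge 2k+2n+9$ is dictated by the Sobolev estimates in that theorem together with the Sobolev embedding needed to land in $\Cscr^k$. One then checks, shrinking $T_0$ once more, that $F_t=z+u_t$ is an embedding of $\bar D$ into $X$ (identifying the neighbourhood with its image in $\C^N$): it is $\Cscr^1$-close to $F_{t_0}=\mathrm{id}$, hence an immersion, and injective on the compact set $\bar D$ for the same reason. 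By construction $F_t$ is $J_t$-holomorphic and its inverse is $J_{t_0}$-holomorphic on $D_t=F_t(D)$, so $F_t:(D,J_t)\to(D_t,J_{t_0})$ is biholomorphic.

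The main obstacle is the parametric fixed-point/iteration step: one must run the Kohn solution operator for the \emph{fixed} structure $J_{t_0}$ while the data (the inhomogeneous term and the first-order correction) involve the \emph{varying} $J_t$, and show that the Newton- or Picard-type iteration converges in a $t$-uniform way with the output continuous in $t$. This requires that Theorem~\ref{th:dibar} give not merely existence of a solution operator for each $t$ but a \emph{single} operator (for $J_{t_0}$) with quantitative Sobolev bounds, plus continuity in $t$ of the solution of the full nonlinear equation; the bookkeeping of which Sobolev index feeds into which, and the resulting arithmetic constraint $r\ge 2k+2n+9$, is where the care lies. In the $\Cscr^\infty$ case one upgrades by running the same argument in every $\Cscr^k$ simultaneously — the fixed point is independent of $k$ by uniqueness — and invoking the smooth parametric version of Theorem~\ref{th:dibar}, together with elliptic regularity for $\dibar_{J_{t_0}}$ away from and up to the boundary, to conclude $F_t\in\Cscr^{0,\infty}(T_0\times\bar D)$.
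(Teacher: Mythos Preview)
Your overall strategy is sound, but it is considerably more complicated than what the paper does, and the complication is avoidable. You convert the equation $\dibar_{J_t}(z+u_t)=0$ into an equation of the form $\dibar_J u_t = (\text{terms in }\mu_t,\,du_t)$ for the \emph{fixed} operator $\dibar_J=\dibar_{J_{t_0}}$, and then run a contraction-mapping iteration. The paper instead keeps the equation in the $J_t$-world and solves it \emph{linearly}: for a fixed proper $J$-holomorphic embedding $f:X\hookrightarrow\C^{2n+1}$, set $\alpha_t=\dibar_{J_t}(f|_{\bar D})$, a $\C^{2n+1}$-valued $J_t$-$(0,1)$-form which is automatically $\dibar_{J_t}$-closed, and take $\phi_t=K_t\alpha_t$, the Kohn solution for the \emph{varying} structure $J_t$. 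Theorem~\ref{th:dibar} (parts (c)--(e)) gives precisely that $\phi_t\in\Cscr^k(D)$ depends continuously on $t$, with $\phi_{t_0}=0$ since $\alpha_{t_0}=0$. Then $f_t=f-\phi_t$ is $J_t$-holomorphic and $\Cscr^k$-close to $f$. No iteration, no fixed point, no contraction estimate; the ``main obstacle'' you flag simply does not arise, and the arithmetic $r=2s+5$, $s=k+n+2$ falls out directly from Theorem~\ref{th:dibar}(d) and Sobolev embedding.

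Two further remarks. First, your claim that the right-hand side of your $\dibar_J$-equation is $\dibar_J$-closed ``by integrability of $J_t$'' is not obvious: integrability gives $\dibar_{J_t}^2=0$, not $\dibar_J$-closedness of a form built from $J_t$-data and then projected to $J$-types, so your iteration would need more care here (or one works with the Neumann operator on all forms). Second, you gloss over the fact that $F_t=z+u_t$ takes values in $\C^N$, not in $X$. The paper handles this by composing with a holomorphic retraction $\tau:U\to f(X)$ of a tubular neighbourhood (Docquier--Grauert), setting $F_t=f^{-1}\circ\tau\circ f_t$; you would need the same device.
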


Theorem \ref{th:Hamilton} is likely not optimal in terms of regularity. 
For relatively compact domains in open 
Riemann surfaces, a more precise result 
\cite[Theorem 4.3]{Forstneric2024Runge} 
was obtained via the Beltrami equation.

We begin with preliminaries. Choose a smooth 
Riemannian metric $g$ on $X$ and let $dV$ be the associated volume
measure. Fix a relatively compact domain $D\Subset X$ with $\Cscr^1$ 
boundary. Let $L^2(D)$ denote the space of measurable functions $f$ 
on $D$ with $\|f\|^2_{L^2(D)} = \int_D |f|^2 dV<+\infty$.
For $s\in \Z_+$ we denote by $H_s(D)=W^{s,2}(D)$ 
the Sobolev (Hilbert) space of functions on $D$ whose derivatives 
of order up to $s$ belong to $L^2(D)$. 
In particular, $H_0(D)=L^2(D)$.
(For a discussion of Sobolev spaces, see Adams \cite{AdamsRA1975} or 
Folland and Kohn \cite[Appendix]{FollandKohn1972}.) 
When $X=\R^{N}$ with the Euclidean metric, 
the norm on $H_s(D)$ is given by 
$\|f\|_{H_s(D)}^2=\sum_{|\alpha|\le s} \|D^\alpha f\|_{L^2(D)}^2$, where 
$D^\alpha$ for a multiindex $\alpha\in\Z_+^{N}$ denotes a partial derivative 
of order $|\alpha|$ on $\R^N$.
On a smooth manifold $X$ we introduce these and 
other norms mentioned in the sequel by using a finite covering 
of $\bar D$ by smooth charts; see 
\cite[Appendix, p. 122]{FollandKohn1972}.
By $\Cscr^s(D)$ we denote the Banach space of functions 
having continuous bounded partial derivatives of order $\le s$, with 
$\|f\|_{\Cscr^s(D)}=\sum_{|\alpha|\le s} \sup_{x\in D}|D^\alpha f(x)|$
when $D\subset\R^N$. 
Given an integrable almost complex structure $J$ on $X$, 
we have the induced metrics on the vector bundles 
$\Lambda^{p,q}(\C T^*X)$ of differential $(p,q)$-forms
(see Section \ref{sec:almostcomplex}).
We denote by $H_{s}^{p,q}(D,J)$ the Sobolev space $W^{s,2}(D)$ 
of $(p,q)$-forms on $D$ with respect to $J$, endowed
with the inner product \eqref{eq:Jproduct}.
%The norms on these space are introduced by a system of local charts covering $\bar D$.

The following result is \cite[Theorem 3.10]{GreeneKrantz1982} 
by Greene and Krantz. The regularity statements for a single
complex structure are due to Kohn \cite{Kohn1963};
see also \cite[Proposition 3.1.15, p.\ 52]{FollandKohn1972}.
%This is one of the main ingredients in the proof of Theorem \ref{th:Hamilton}.

%
%    Theorem 3.10 of Greene and Krantz
%
\begin{theorem} 
\label{th:dibar}
Assume that $X$ is a smooth Riemannian manifold of real dimension 
$2n\ge 2$, $s\ge 1$ is an integer, $D\Subset X$ is
a relatively compact domain with boundary of class 
$\Cscr^{2s+5}$, $T$ is a topological space, 
and $\Jscr=\{J_t\}_{t\in T}$ is a continuous 
family of integrable Stein structures 
of class $\Cscr^{2s+5}$ on $\bar D$ (i.e., $\Jscr$ is of class 
$\Cscr^{0,2s+5}$ on $T\times \bar D$)
such that $D$ is strongly $J_t$-pseudoconvex with 
Stein interior for every $t\in T$. 
Then the following assertions hold.
\begin{enumerate}[\rm (a)]
\item % (a) 
For every $\alpha\in H^{p,q}_{0}(D,J_t)$ $(p\ge 0,\ q\ge 1,\ t\in T)$ with 
$\dibar_{J_t} \alpha=0$ there is a unique (Kohn) solution 
$K_{t}\alpha \in H_{0}^{p,q-1}(D,J_t)$ of the equation 
$\dibar_{J_t} (K_{t}\alpha) =\alpha$ 
satisfying $K_{t}\alpha \perp \ker(\dibar_{J_t})$ with respect to
the inner product $(\cdotp,\cdotp)_{J_t}$ given by \eqref{eq:Jproduct}.
\item  % (b) 
If $\alpha\in H^{p,q}_{s}(D,J_t)$ then 
$K_{t}\alpha\in H_{s}^{p,q-1}(D,J_t)$, 
and $\|K_{t}\alpha\|_s \le C \|\alpha\|_s$ for some $C>0$ which 
can be chosen independent of $t$ in any compact subset of $T$.
\item % (c)
If the forms $\alpha_t\in H^{p,q}_{s}(D,J_t)$ depend continuously 
on $t\in T$, then $K_{t}\alpha_t\in H_{s-1}^{p,q-1}(D,J_t)$ also 
depend continuously on $t\in T$.
\item % (d) 
If $k\ge 0$, $s > k + n +1$, and the forms 
$\alpha_t\in H^{0,1}_{s}(D,J_t)$ depend continuously on $t\in T$, 
then the functions $K_t \alpha_t \in \Cscr^k(D)$ 
depend continuously on $t\in T$.
\item % (e) 
If $bD$ is $\Cscr^\infty$ smooth,  $\Jscr$ is of class $\Cscr^{0,\infty}$, 
and $\alpha_t\in \Dscr^{p,q}(\bar D,J_t)$ are smooth and continuous in $t$, 
then $K_{t}\alpha_t\in \Dscr^{p,q-1}(\bar D,J_t)$ are 
also smooth and continuous in $t\in T$.
\end{enumerate}
\end{theorem}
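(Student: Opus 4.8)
The plan is to follow Kohn's $\dibar$-Neumann theory for strongly pseudoconvex domains and then to track carefully how the relevant operators depend on the parameter $t$. For each fixed $t\in T$, equip $H^{p,q}_0(D,J_t)$ with the inner product \eqref{eq:Jproduct}, let $\dibar_t=\dibar_{J_t}$ denote the maximal closed extension of the $\dibar$-operator, let $\dibar_t^*$ be its Hilbert-space adjoint (which depends on $t$ through the metric $\langle\cdot,\cdot\rangle_{J_t}$), and form the complex Laplacian $\square_t=\dibar_t\dibar_t^*+\dibar_t^*\dibar_t$. Since $bD$ is strongly $J_t$-pseudoconvex, Kohn's basic estimate \cite{Kohn1963} gives a subelliptic $\tfrac12$-estimate $\|\phi\|_{1/2}^2\le C(\|\dibar_t\phi\|^2+\|\dibar_t^*\phi\|^2)$ for $\phi$ in the domain of $\dibar_t^*$ in bidegree $(p,q)$ with $q\ge 1$; because $D$ has Stein interior the harmonic space vanishes in these bidegrees, so $\square_t$ is invertible with bounded Neumann operator $N_t=\square_t^{-1}$. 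Part (a) follows by setting $K_t\alpha=\dibar_t^*N_t\alpha$: this solves $\dibar_t(K_t\alpha)=\alpha$ whenever $\dibar_t\alpha=0$, and it lies in $\overline{\mathrm{Ran}\,\dibar_t^*}=(\ker\dibar_t)^\perp$, which characterises it uniquely as the minimal-$L^2$ solution.

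For part (b) I would invoke Kohn's global regularity theory \cite{Kohn1963,FollandKohn1972}: the subelliptic $\tfrac12$-estimate bootstraps to Sobolev estimates for $N_t$ and hence to $\|K_t\alpha\|_s=\|\dibar_t^*N_t\alpha\|_s\le C\|\alpha\|_s$, valid once $bD\in\Cscr^{2s+5}$ and $J_t\in\Cscr^{2s+5}(\bar D)$, which is exactly the regularity hypothesis. The new point is uniformity: the constant $C$ in Kohn's estimate is controlled by the lower bound of the Levi form, by the $\Cscr^{2s+5}$-norms of $J_t$ and of the defining function, and by the second fundamental form of $bD$ measured with respect to $J_t$. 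All of these vary continuously in $t$ by the hypothesis $\Jscr\in\Cscr^{0,2s+5}$ and the openness of strong pseudoconvexity, so over any compact $K\subset T$ they are uniformly bounded and a single $C$ works.

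The heart of the theorem is the continuous dependence asserted in part (c), which will be the main obstacle. Here one must compare forms and operators attached to \emph{different} complex structures; the preliminary step is to identify the bundles $\Lambda^{p,q}(\C T^*X)$ for varying $J_t$ via the continuously varying projections $\pi_{p,q}^{J_t}$ built from \eqref{eq:projections}, so that the hypothesis that $\alpha_t$ depends continuously on $t$ becomes an honest statement in a fixed space. Fixing $t_0$ and splitting $K_t\alpha_t-K_{t_0}\alpha_{t_0}=(K_t-K_{t_0})\alpha_{t_0}+K_t(\alpha_t-\alpha_{t_0})$, the second term is controlled by the uniform bound of part (b), since $\|K_t(\alpha_t-\alpha_{t_0})\|_{s-1}\le C\|\alpha_t-\alpha_{t_0}\|_s\to 0$. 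For the first term I would use the resolvent identity $N_t-N_{t_0}=N_t(\square_{t_0}-\square_t)N_{t_0}$ and the analogous comparison for $\dibar_t^*$: the operator $\square_{t_0}-\square_t$ has coefficients built from the differences $J_t-J_{t_0}$ and their first derivatives and maps $H_s\to H_{s-2}$ with operator norm tending to $0$ as $t\to t_0$. Combined with the uniform regularity of $N_t$ and $N_{t_0}$ from (b), this gives continuity of the family $K_t$ at the cost of one derivative, since one controls $K_t\alpha_t$ in $H_{s-1}$ using $H_s$-data, which is precisely the loss recorded in the statement (following Greene and Krantz \cite{GreeneKrantz1982}).

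Parts (d) and (e) are then consequences. Since $\dim_{\R}D=2n$, the Sobolev embedding $H_{s-1}(D)\hookrightarrow\Cscr^k(D)$ holds whenever $s-1>k+n$, i.e.\ $s>k+n+1$, so the $H_{s-1}$-continuity of part (c) upgrades to $\Cscr^k$-continuity, giving (d). For (e) one runs (c)--(d) for every $s$ simultaneously: smooth data lie in all $H_s$, Kohn's regularity makes the solutions smooth up to $\bar D$, and continuity in each $\Cscr^k$ norm is continuity in the $\Cscr^\infty$ topology. I expect the quantitative continuity of the Neumann operator $N_t$ in $t$ --- specifically the derivative-loss bookkeeping in the resolvent comparison, made uniform over compact parameter sets --- to be the decisive and most delicate step.
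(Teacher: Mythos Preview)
Your proposal is correct and aligns with the paper's treatment: the paper does not give an independent proof but cites this result as \cite[Theorem 3.10]{GreeneKrantz1982}, adding only that $K_t=\vartheta_t N_t$ with $N_t$ the $\dibar$--Neumann operator, that (d) follows from (c) via the Sobolev embedding $H_{s-1}\hookrightarrow \Cscr^k$ for $s-1>k+n$ (Proposition \ref{prop:Sobolev}), and that (e) holds because the Kohn solution is independent of the smoothness class. Your sketch of the resolvent comparison $N_t-N_{t_0}=N_t(\square_{t_0}-\square_t)N_{t_0}$ with uniform Kohn estimates is precisely the Greene--Krantz mechanism underlying the cited reference, so you have reconstructed the intended argument.
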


The Kohn solution $\phi=K_t\alpha$ of  
the equation $\dibar_{J_t} \phi=\alpha$, subject to 
$\dibar_{J_t} \alpha=0$, 
is given by $\phi=\vartheta_t N_t\alpha$, where $N_t$ is the 
$\dibar$--Neumann operator associated to $J_t$ and $\vartheta_t$ 
is the Hilbert space adjoint of $\dibar_{J_t}$ on $D$;
see \cite[Theorem 3.1.14]{FollandKohn1972}. 
The same result holds if $\bar D$ is a compact smooth manifold 
with boundary that is not necessarily embedded in an ambient manifold. 
Part (d) follows from (c) and the following Sobolev embedding theorem;
see Adams \cite[p.\ 97ff]{AdamsRA1975} or
Folland and Kohn \cite[Proposition A.1.2, p.\ 115]{FollandKohn1972} 
for $X=\R^N$ and note that the general case follows by using charts 
(see \cite[p.\ 122]{FollandKohn1972}).
Part (e) holds because the forms $K_{t}\alpha_t$ in (a)
are independent of the smoothness class
(see \cite[p.\ 55]{GreeneKrantz1982}). 

%
% SOBOLEV EMBEDDING
%
\begin{proposition}[Sobolev embedding theorem]
\label{prop:Sobolev}
Let $D$ be a relatively compact domain with $\Cscr^1$ boundary
in a smooth manifold $X$ of dimension $N$. Then, 
$H_s(D)\subset \Cscr^k(D)$ and 
$\|\cdotp\|_{\Cscr^k(D)} \le C \|\cdotp\|_{H_s(D)}$ for some $C>0$ 
if and only if $s > k + N/2$. If this holds then the weak 
derivatives of $u\in H_s(D)$ up to order $k$ are, after correction on a set 
of measure zero, classical derivatives. 
\end{proposition}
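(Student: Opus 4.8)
\emph{Proof strategy.}
The plan is to reduce the assertion to the model case $X=\R^N$ with the Euclidean metric, where it follows from the Fourier transform, and then transport it first to a bounded domain and then to a manifold by a Stein-type extension operator together with a partition of unity. The sufficiency of the condition $s>k+N/2$ is the substance of the model case, while its necessity is a short duality argument valid for any domain with nonempty interior.

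For the model case, let $u\in H_s(\R^N)=W^{s,2}(\R^N)$, so that, by Plancherel and comparability of norms, $(1+|\xi|^2)^{s/2}\,\widehat u\in L^2(\R^N)$. For a multi-index $\alpha$ with $|\alpha|\le k$ the Cauchy--Schwarz inequality gives
\[
 \int_{\R^N}\bigl|\xi^\alpha\,\widehat u(\xi)\bigr|\,d\xi
 \;\le\; \|u\|_{H_s(\R^N)}\,\Bigl(\int_{\R^N}|\xi|^{2|\alpha|}\,(1+|\xi|^2)^{-s}\,d\xi\Bigr)^{1/2},
\]
and the last integral is finite precisely because $2(s-|\alpha|)\ge 2(s-k)>N$. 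Hence $\xi^\alpha\widehat u\in L^1(\R^N)$, so $D^\alpha u$ (whose Fourier transform is a constant multiple of $\xi^\alpha\widehat u$) coincides almost everywhere with a bounded continuous function satisfying $\sup_{\R^N}|D^\alpha u|\le C\|u\|_{H_s(\R^N)}$; summing over $|\alpha|\le k$ shows $u\in\Cscr^k(\R^N)$ with the asserted estimate. That the weak derivatives $D^\alpha u$ with $|\alpha|\le k$ are genuine classical derivatives then follows by mollification: $u*\rho_\varepsilon\to u$ in $\Cscr^k_{\mathrm{loc}}$ as $\varepsilon\to0$, each $u*\rho_\varepsilon$ is smooth, and so the limit has continuous classical derivatives up to order $k$ which coincide with the continuous representatives of the weak ones.

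To pass to a bounded domain I note that, since $bD$ is of class $\Cscr^1$ and $D$ is bounded, $D$ is a bounded Lipschitz domain; hence by Stein's extension theorem (see e.g.\ \cite{AdamsRA1975}) there is a bounded linear operator $E\colon H_s(D)\to H_s(\R^N)$ with $Eu|_D=u$, and composing with the model embedding yields $\|u\|_{\Cscr^k(D)}=\|Eu\|_{\Cscr^k(D)}\le\|Eu\|_{\Cscr^k(\R^N)}\le C\|Eu\|_{H_s(\R^N)}\le C'\|u\|_{H_s(D)}$. For a domain $D\Subset X$ in a manifold I would first extend $u\in H_s(D)$ to a compactly supported element of $H_s$ on $X$ by the manifold analogue of Stein's operator, built by localising to finitely many boundary charts in which $bD$ is a $\Cscr^1$ graph, applying the half-space extension there, and gluing by a partition of unity; a further partition of unity subordinate to finitely many coordinate charts covering $\overline D$ then reduces each summand to a compactly supported element of $H_s(\R^N)$, to which the model estimate applies, and the summands are added back.

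Finally, for the necessity of $s>k+N/2$, suppose the embedding held for $D$ and fix a ball $B=B(x_0,r)\Subset D$. Then for every $\varphi\in\Cscr^{\infty}_{c}(B)$ and $|\alpha|\le k$ we would have $|D^\alpha\varphi(x_0)|\le\|\varphi\|_{\Cscr^k(D)}\le C\|\varphi\|_{H_s(D)}=C\|\varphi\|_{H_s(\R^N)}$; a routine cut-off argument (multiplying an arbitrary test function by a fixed bump supported in $B$) promotes this to a bound over all of $\Cscr^{\infty}_{c}(\R^N)$, so that $D^\alpha\delta_{x_0}\in H_{-s}(\R^N)=\bigl(H_s(\R^N)\bigr)^*$. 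On the Fourier side this says $\xi^\alpha\in L^2\bigl((1+|\xi|^2)^{-s}\,d\xi\bigr)$, i.e.\ $2(s-|\alpha|)>N$; taking $|\alpha|=k$ gives $s>k+N/2$, which closes the equivalence. The one genuinely delicate point in this scheme is the extension operator under the mere hypothesis that $bD$ is $\Cscr^1$ (Lipschitz): the naive device of flattening the boundary and reflecting fails once $s\ge 2$, since a $\Cscr^1$ change of coordinates need not preserve the higher Sobolev spaces, and this is exactly why one must invoke Stein's extension operator, whose construction bypasses boundary flattening. Everything else — the Fourier estimate, the mollification, the partition-of-unity bookkeeping, and the duality argument — is routine.
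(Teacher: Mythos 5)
Your proof is correct and follows the standard route: the paper itself offers no argument for this proposition beyond citing Adams and Folland--Kohn for the case $X=\R^N$ and remarking that the manifold case follows by charts, which is precisely the reduction you carry out (Fourier sufficiency, Stein extension for the $\Cscr^1$ (Lipschitz) boundary, chart/partition-of-unity localisation, and the duality argument for necessity). You are also right to single out the extension operator as the delicate point -- Stein's operator is the correct tool since boundary flattening by a merely $\Cscr^1$ map does not preserve $H_s$ for $s\ge 2$ -- so there is nothing to add.
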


\begin{proof}[Proof of Theorem \ref{th:Hamilton}]
%We follow \cite[proof of Theorem 1.13]{GreeneKrantz1982}.
Choose a proper $J$-holomorphic embedding $f:X\hra \C^{2n+1}$
where $n=\dim X$
(see \cite{Bishop1961AJM} and \cite[Theorem 2.4.1]{Forstneric2017E}). 
By Docquier and Grauert \cite{DocquierGrauert1960} 
(see also \cite[Theorem 8, p. 257]{GunningRossi1965} 
or \cite[Theorem 3.3.3, p.\ 74]{Forstneric2017E}) 
there are an open neighbourhood $U\subset \C^{2n+1}$ of $f(X)$
and a holomorphic retraction $\tau:U\to f(X)$. 
Set $s = k + n +2$, so $2s+5=2k+2n+9=r$. 
(If $k=\infty$, we take $s=r=\infty$.) 
Recall that the family $\{J_t\}_{t\in T}$ 
is of class $\Cscr^{0,r}$ and $J_{t_0}$ is the restriction of $J$ to $\bar D$.
Note that $\alpha_t:=\dibar_{J_t}(f|_{\bar D})$ for $t\in T$ 
is a $\C^{2n+1}$-valued $(0,1)$-form with respect to $J_t$, 
of class $\Cscr^{r}(\bar D)$ and hence in $H^{0,1}_s(\bar D,J_t)$,
depending continuously on $t\in T$. 
By Theorem \ref{th:dibar} (d), for every $t\in T$ close to $t_0$ 
there is a unique solution 
$\phi_t$ of $\dibar_{J_t}\phi_t=\alpha_t$ and 
$\phi_t\perp\ker(\dibar_{J_t})$, with $\phi_t\in \Cscr^k(D)$
depending continuously on $t$. 
The map $f_t=f-\phi_t: D\to\C^{2n+1}$ is then $J_t$-holomorphic
and continuous $t$ as an element of the space $\Cscr^k(D)^{2n+1}$.
For $t=t_0$ we have $J_{t_0}=J$ and hence $\alpha_{t_0}=\dibar_J f=0$, 
$\phi_{t_0}=0$, and $f_{t_0} = f|_{D}$. It follows that 
for $t$ close enough to $t_0$ the map $f_t$ is so close to $f|_{D}$ 
in $\Cscr^k(D)^{2n+1}$ that its image belongs to $U$. 
For such $t$, the map
$
	F_t=f^{-1}\circ \tau\circ f_t: D\to X
$
is well-defined, $(J_t,J)$-holomorphic, 
it depends continuously on $t$ as an element of $\Cscr^k(D,X)$, 
and $F_{t_0}=\Id_{D}$. It follows that $F_t$ is $(J_t,J)$-biholomorphic 
on $D$ for $t$ close enough to $t_0$.
\end{proof}

%
%
%  A WILD FAMILY OF COMPLEX STRUCTURES
%
%
\section{A wild family of complex structures on $\R^{4}$}
\label{sec:wild}

In this section, we construct a smooth family $\{J_t\}_{t\in \R}$ of integrable 
complex structures on $\R^{2n}$ for any $n>1$ with wild behaviour of
holomorphic hulls near $t=0$. It is built by using a 
Fatou--Bieberbach map $\C^2\to\C^2$ with non-Runge 
image, constructed by Wold \cite{Wold2008}.
This example motivates the definition
of a tame family of complex structures; see Definition \ref{def:tame}.

A compact set $K$ in a complex manifold $(X,J)$ is said to be 
{\em holomorphically convex} or {\em $J$-convex} if 
$K$ equals its holomorphically convex hull (also called $J$-convex hull),
defined by 
\[ 
	\wh K_J = 
	\big\{p\in X: |f(p)|\le \max_{x\in K} |f(x)|\ \ 
	\text{for all}\ f\in\Oscr_J(X)\big\}.
\] 
Here, $\Oscr_J(X)$ denotes the algebra of $J$-holomorphic functions on $X$.
When $J$ is the standard complex structure on $X=\C^n$ 
then $\wh K_J$ is the polynomial hull of $K$. 
See H\"ormander \cite{Hormander1990,Hormander1994} and 
Stout \cite{Stout2007} for further information on holomorphic convexity.

If $X$ is an open Riemann surface then a compact subset $K\subset X$ 
is holomorphically convex if and only if $X\setminus K$ has no relatively 
compact connected components. This is a topological condition 
independent of the choice of the complex structure.
This fact plays an important role in the proof of the 
Oka principle in \cite[Theorem 1.6]{Forstneric2024Runge}
for maps from families of complex structures on a smooth 
open surface to an Oka manifold.  
When attempting to obtain analogous results for 
families of integrable Stein structures $\{J_t\}_{t\in T}$
on a smooth open manifold $X$ of dimension $2n\ge 4$, 
one of the problems 
concerns the behaviour of $J_t$-convex hulls $\wh K_{\! J_t}$ 
of a compact set $K\subset X$ with respect to $t$. 
The following result shows that when $X=\R^{2n}$, $n>1$, 
the hulls can explode when $t\in T$ approaches a limit value $t_0\in T$.
As shown in Lemma \ref{lem:bounded} and Corollary \ref{cor:extension},
this phenomenon implies major restrictions on families of global 
$J_t$-holomorphic functions on $X$. 

%
%  A NONTAME FAMILY OF STEIN STRUCTURES
%
\begin{theorem}\label{th:nontame}
Given a compact set $K\subset \R^{2n}$ $(n>1)$ with nonempty interior,
there is a family of integrable smooth complex structures 
$\{J_t\}_{t\in\R}$ on $\R^{2n}$, depending smoothly on $t\in \R$, 
such that $J_0$ is the standard complex structure on $\C^n$,
$(\R^{2n},J_t)$ is biholomorphic to $(\R^{2n},J_0)\cong\C^n$ 
for every $t\in\R$, and for any neighbourhood $U\subset \R$ of $0\in\R$ 
the set $\bigcup_{t\in U} \wh{K}_{J_t}\subset \R^{2n}$ 
is unbounded. 
\end{theorem}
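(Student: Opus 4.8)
The plan is to realise the family as pullbacks $J_t=\gamma_t^{*}\Jst$ of the standard structure on $\C^n$ under a family of diffeomorphisms $\gamma_t\colon\R^{2n}\to\C^n$ which, as $t\to 0$, all agree on larger and larger balls with a single Fatou--Bieberbach map $\Phi$ having non-Runge image; the non-Rungeness is what forces the hulls of $K$ to escape to infinity. First I would normalise the data. After an affine change of coordinates assume $0$ is an interior point of $K$, so $\overline{B(0,\rho)}\subseteq K$ for some $\rho>0$. By Wold \cite{Wold2008} there is an injective holomorphic self-map of $\C^2$ with non-Runge image; taking a product with the identity on $\C^{n-2}$ (and using that the polynomial hull of a product of compacts contains the product of their polynomial hulls) I obtain, for every $n\ge2$, an injective holomorphic $\Phi\colon\C^n\to\C^n$ with $\Phi(0)=0$, $D\Phi(0)=\Id$, whose image $\Omega=\Phi(\C^n)$ is not Runge: some compact $L\subseteq\Omega$ has $\wh L\not\subseteq\Omega$, where $\wh A$ denotes the polynomial hull (equivalently, the $\Oscr(\C^n)$-hull) of $A\subseteq\C^n$. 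Since $\Omega$ is exhausted by the increasing balls $\Phi(\overline{B(0,R)})$, one has $L\subseteq\Phi(\overline{B(0,R_0)})$ for some $R_0$, hence $\wh{\Phi(\overline{B(0,R_0)})}\not\subseteq\Omega$ by monotonicity; replacing $\Phi$ by $z\mapsto\lambda^{-1}\Phi(\lambda z)$ with $\lambda=R_0/\rho$ (which preserves the normalisation at $0$ and rescales $\Omega$ and the hulls consistently) I may take $R_0=\rho$, so that $\wh{\Phi(K)}\supseteq\wh{\Phi(\overline{B(0,\rho)})}\not\subseteq\Omega$. Fix a point $q_0\in\wh{\Phi(K)}\setminus\Omega$.

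Next I would build the family. For $t\in(-\varepsilon,\varepsilon)\setminus\{0\}$ set $R(t)=1/|t|$ and $A_t=\overline{B(0,R(t))}$, so $A_t$ increases to $\C^n$ as $t\to0$. I would construct a diffeomorphism $\gamma_t\colon\R^{2n}\to\C^n$, depending smoothly on $t$, with $\gamma_t=\Phi$ on $A_t$: the remaining part of $\gamma_t$ must be a diffeomorphism $\R^{2n}\setminus A_t\to\C^n\setminus\Phi(A_t)$ extending $\Phi$ along the boundary sphere, and it exists because both complements are standard — by the smooth Schoenflies theorem when $2n\ge6$, and when $2n=4$ by exhausting $\C^n$ instead by images of a small ball under automorphisms of $\C^n$, furnished by the basin-of-attraction description of $\Phi$, whose complements are manifestly standard; the parameter dependence is obtained by standard isotopy arguments. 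Now set $J_t=\gamma_t^{*}\Jst$ for $t\ne0$ and $J_0=\Jst$. Each $J_t$ is integrable, being a pullback of an integrable structure, and $\gamma_t$ is a biholomorphism $(\R^{2n},J_t)\to\C^n$, so $(\R^{2n},J_t)\cong\C^n$ for every $t$; at $t=0$ this is clear. Since $\Phi$ is holomorphic, $\Phi^{*}\Jst=\Jst$, hence $J_t\equiv\Jst$ on $A_t$; because $\{(t,x):|t|\,|x|<1\}$ is an open neighbourhood of $\{0\}\times\R^{2n}$ on which $J_t\equiv\Jst$, the joint smoothness of $(t,x)\mapsto J_t(x)$ — in particular at $t=0$ — reduces to the smoothness of $\gamma_t$ off this set, together with a routine $C^\infty$ matching along $\{|t|\,|x|=1\}$. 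Finally, precomposing the parameter with a diffeomorphism $\R\to(-\varepsilon,\varepsilon)$ fixing $0$ yields a family defined for all $t\in\R$ with these properties and with $J_0$ the standard structure.

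Granting this, the explosion of the hulls is immediate. For small $t\ne0$ we have $K\subseteq A_t$, so $\gamma_t(K)=\Phi(K)$, and since $\gamma_t$ is a biholomorphism (pulling $\Oscr(\C^n)$ back to $\Oscr_{J_t}(\R^{2n})$),
\[
  \wh K_{J_t}=\gamma_t^{-1}\!\bigl(\wh{\gamma_t(K)}\bigr)=\gamma_t^{-1}\!\bigl(\wh{\Phi(K)}\bigr)\ni\gamma_t^{-1}(q_0).
\]
But $q_0\notin\Omega\supseteq\Phi(A_t)=\gamma_t(A_t)$, so $\gamma_t^{-1}(q_0)\notin A_t$, i.e.\ $|\gamma_t^{-1}(q_0)|>R(t)=1/|t|\to\infty$ as $t\to0$. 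Hence $\wh K_{J_t}$ meets the complement of $B(0,1/|t|)$, and $\bigcup_{t\in U}\wh K_{J_t}$ is unbounded for every neighbourhood $U$ of $0$. The complex-analytic ingredient here is quoted from Wold, so I expect the main obstacle to be the packaging in the second step: producing $\gamma_t$ as a family that is genuinely $C^\infty$ in $(t,x)$ through $t=0$ while each $\gamma_t$ is exactly holomorphic on a ball exhausting $\C^n$. That $J_t$ is literally $\Jst$ near every point for small $t$ makes the value $t=0$ harmless; the real work is the parameter-dependent extension of $\Phi$ past the ball to a diffeomorphism of $\R^{2n}$, with the differential-topological care peculiar to real dimension $4$.
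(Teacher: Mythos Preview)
Your overall strategy coincides with the paper's: pull back $\Jst$ by a smooth family of diffeomorphisms that agree with Wold's non-Runge Fatou--Bieberbach map $\Phi$ on sets exhausting $\C^n$, and read off the hull explosion from $\wh K_{J_t}=\gamma_t^{-1}(\widehat{\Phi(K)})$. Your endgame argument (tracking a point $q_0\in\widehat{\Phi(K)}\setminus\Omega$) is a cleaner variant of the paper's contradiction showing $\wh K_{J_t}\not\subset t^{-1}K$; both are fine.

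The substantive difference is exactly where you flagged the difficulty: constructing the diffeomorphisms. You propose to extend $\Phi|_{A_t}$ to a global diffeomorphism via Schoenflies (with a separate workaround in real dimension $4$) and then argue parameter-smoothness by isotopy extension. This can be made to work, but the $2n=4$ case remains delicate---your basin-of-attraction exhaustion no longer uses round balls $A_t$, so the ``$|\gamma_t^{-1}(q_0)|>1/|t|$'' step needs rephrasing---and smooth dependence through the Schoenflies/isotopy machinery needs care. The paper bypasses all of this with a single device that works uniformly in every dimension and yields smoothness for free: take a smooth isotopy $\Phi_s$ of injective holomorphic maps from $\Id$ to $\Phi$ (e.g.\ $\Phi_s(z)=s\Phi(0)+s^{-1}A_s(\Phi(sz)-\Phi(0))$), let $V$ be its time-dependent generating vector field on the trace of the isotopy, multiply $V$ by a compactly supported cut-off equal to $1$ on the trajectories through $t^{-1}K$, and flow. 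Compact support forces the time-$1$ map $\Psi_t$ to be a diffeomorphism of $\C^n$ onto itself; agreement with $V$ on the relevant trajectories forces $\Psi_t=\Phi$ on a neighbourhood of $t^{-1}K$; and ODE theory gives smooth dependence on $t$ automatically. No Schoenflies, no dimension split. This is precisely the ``parameter-dependent extension past the ball'' you identified as the real work, solved by the isotopy-extension-via-flows trick rather than by topological surgery.
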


\begin{proof} 
It suffices to consider the case when $K$ is the closed unit ball 
in $\R^4\cong \C^2$. 

Let $\C^*=\C\setminus\{0\}$.
By Wold \cite{Wold2008}, there is an injective 
holomorphic map $\Phi:\C^2\hra \C^2$ such that 
$\Phi(\C^2)\subset \C^*\times \C$ but the polynomial hull
$\widehat{\Phi(K)}$ of $\Phi(K)$ contains the origin 
$0\in \C^2$. In particular, $\widehat{\Phi(K)}\not\subset \Phi(\C^2)$ 
and hence $\Phi(\C^2)$ is not Runge in $\C^2$. 
We shall construct a smooth family of diffeomorphisms
$\Psi_t:\C^2 \to \Psi_t(\C^2) \subset \C^2$, $t\in\R$, such that 
for every $t\ne 0$ we have $\Psi_t(\C^2)=\C^2$ and 
$\Psi_t=\Phi$ holds on a neighbourhood of $t^{-1}K$,
while $\Psi_0=\Phi$ on $\C^2$. 

Assume for a moment that such a family $\Psi_t$ exists.
Let $J_t$ denote the complex structure on $\R^4\cong\C^2$ 
obtained by pulling back by $\Psi_t$ the standard complex structure
$\Jst$ on $\C^2$. In other words, the map 
$\Psi_t:\C^2\to \C^2$ is a biholomorphism from 
$(\C^2,J_t)$ onto $(\C^2,\Jst)$ when $t\ne 0$, and $\Psi_0=\Phi$ is a 
biholomorphism onto the non-Runge domain 
$\Phi(\C^2)\subset \C^2$ when $t=0$. Note that $J_t$
depends smoothly on $t$ since $\Psi_t$ does, and it agrees with $\Jst$ 
on a neighbourhood of $t^{-1}K\supset K$ since on this set we have that 
$\Psi_t=\Phi$, which is $\Jst$-holomorphic. 
For $t\ne 0$, the $J_t$-convex hull of $K$ equals
\begin{equation}\label{eq:hullJt}
	\wh{K}_{J_t} = \Psi_t^{-1} (\widehat{\Psi_t(K)}) = 
	\Psi_t^{-1} (\widehat{\Phi(K)})
\end{equation}
where the second equality follows from the fact that
$\Psi_t =\Phi$ on $t^{-1}K \supset K$. 
We claim that the set $\wh{K}_{J_t} \setminus t^{-1}K$ 
is nonempty for every $t\ne 0$. Indeed, if $\wh{K}_{J_t}\subset t^{-1}K$
then, since $\Phi=\Psi_t$ on $t^{-1}K$, it follows from \eqref{eq:hullJt} that 
$\Phi(\wh{K}_{J_t})=\Psi_t(\wh{K}_{J_t})=\widehat{\Phi(K)}$,
a contradiction to $\widehat{\Phi(K)}\not\subset \Phi(\C^2)$.
As $t\to 0$, the sets $t^{-1}K$ increase to $\C^2$,
and hence the hulls $\wh{K}_{J_t}$ are not contained in 
any bounded subset of $\C^2$ for $t$ in a neighbourhood of $0$. 

It remains to explain the construction of the family of diffeomorphisms 
$\Psi_t:\C^2 \to\C^2$ with the stated properties. It suffices to consider
the parameter values $t\in (0,1)$. 
Choose a smooth isotopy of injective holomorphic maps 
$\Phi_s:\C^2 \to \C^2$ for $s\in [0,1]$ such that $\Phi_0$ is the identity 
map on $\C^2$ and $\Phi_1=\Phi$. Explicitly, we can take $\Phi_0(z)=z$ and
\[
	\Phi_s(z) = s\, \Phi(0) + 
	s^{-1} A_s \big(\Phi(sz)-\Phi(0)\bigr), \quad s\in (0,1], 
\]
where $s\mapsto A_s\in GL_2(\C)$ is a smooth path with 
$A_0=\Phi'(0)^{-1}$ and $A_1=I$.
Note that $\{\Phi_s\}_{s\in [0,1]}$ is the flow of the holomorphic 
time-dependent vector field $V$ on $\C^2$  
defined on the open set
\[
	\Sigma=\bigl\{\bigl(s,\Phi_{s}(z)\bigr) : s\in [0,1],\ z\in \C^2 \bigr\} 
	\subset [0,1]\times \C^2
\]
(the trace of the isotopy $\{\Phi_s\}_{s\in [0,1]}$) by 
\[
	V(s,\Phi_s(z)) = \frac{\di}{\di u} \Big|_{u=s} \Phi_u(z).  
\]
For a fixed $t\in (0,1)$ consider the compact set 
\[ 
	\Sigma_t=\bigl\{\bigl(s,\Phi_{s}(z)\bigr) : s\in [0,1],\ z\in t^{-1}K \bigr\} 
	\subset [0,1]\times \C^2.
\] 
Pick a smooth function $\chi:(0,1) \times [0,1]\times \C^2\to [0,1]$ 
such that for every $t\in (0,1)$ the function 
$\chi(t,\cdotp,\cdotp):[0,1]\times \C^2\to [0,1]$ equals
$1$ on a neighbourhood of $\Sigma_t$ and has compact support.
For $(t,s)\in (0,1)\times [0,1]$ we define a vector field $W_{t,s}$ on $\C^2$ by 
\[
	W_{t,s}(z) =\chi(t,s,z)V(s,z), \quad z\in\C^2. 
\]
Note that $W_{t,s}$ is smooth in all variables, it agrees with 
$V(s,\cdotp)$ on a neighbourhood of $\Sigma_t$, and has compact 
support in $[0,1]\times \C^2$ for every fixed $t\in (0,1)$. 
It follows that the flow $\Psi_{t,s}$ of $W_{t,s}$ with 
respect to the variable $s\in[0,1]$, with $t\in (0,1)$ as a parameter, 
solving the initial value problem 
\[
	\frac{\di}{\di u}\Big|_{u=s} \Psi_{t,u}(z) = W_{t,s}(\Psi_{t,s}(z)),
	\quad \Psi_{t,0}(z)=z,  
\]
exists for all $s\in [0,1]$ and $z\in\C^2$, it agrees
with the flow of $V$ for $z\in t^{-1}K$ (which is $\Phi_s(z)$),
and is fixed near infinity in the $z$ variable since $W_{t,s}$ has
compact support. It follows that every map $\Psi_{t,s}:\C^2\to\C^2$
for $t\in (0,1),\ s\in [0,1]$ is a diffeomorphism onto $\C^2$. 
Setting $s=1$ gives a family of diffeomorphisms 
$\Psi_t=\Psi_{t,1}:\C^2\to\C^2$, $t\in (0,1)$, 
with the stated properties. Clearly the family extends smoothly
to all $t\in \R$.
\end{proof}

The following implies that one cannot do any serious analysis
for families of $J_t$-holomorphic functions
for $\Jscr=\{J_t\}_{t\in \R}$ in Theorem \ref{th:nontame}.
See also Corollary \ref{cor:extension}.

\begin{lemma}\label{lem:bounded}
(Notation as above.) If $f$ is a holomorphic function on 
$\Omega=\Phi(\C^2)\subset\C^2$ such that $f\circ \Phi \in\Oscr(\C^2)$
extends to a continuous family $f_t\in \Oscr_{J_t}(\C^2)$ for
$t$ near $0$, then $f$ is bounded on the set
$\widehat{\Phi(K)}\cap \Omega$, which is not relatively 
compact in $\Omega$.
\end{lemma}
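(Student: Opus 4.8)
The plan is to exploit the explicit structure from the construction in Theorem~\ref{th:nontame}, namely that $\Psi_t = \Phi$ on $t^{-1}K \supset K$ for every $t \ne 0$, together with the hull identity \eqref{eq:hullJt}. First I would recall that for $t\ne 0$ the map $\Psi_t : (\C^2, J_t) \to (\C^2, \Jst)$ is a biholomorphism, so pulling back along $\Psi_t$ identifies $\Oscr_{J_t}(\C^2)$ with $\Oscr(\C^2)$; in particular the given family $f_t \in \Oscr_{J_t}(\C^2)$ corresponds to a family of entire functions $g_t = f_t \circ \Psi_t^{-1} \in \Oscr(\C^2)$. At $t = 0$ we have $\Psi_0 = \Phi$, so $g_0$ is the entire function $f \circ \Phi \circ \Phi^{-1} = f$ read on $\Omega = \Phi(\C^2)$; but more to the point, $f \circ \Phi \in \Oscr(\C^2)$ is exactly $f_0$ in the $J_0$-picture, and the hypothesis is precisely that $t \mapsto f_t$ is continuous near $0$ as a family of $J_t$-holomorphic functions (so, say, uniformly on compacts of $\C^2$).

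Next I would fix the ball $K$ (the closed unit ball of $\C^2$) and estimate: since $\widehat{\Phi(K)}$ is compact in $\C^2$, choose $R>0$ with $\widehat{\Phi(K)} \subset R\B$. For $t$ small, $t^{-1}K \supset R\B \supset \widehat{\Phi(K)}$, and on $t^{-1}K$ we have $\Psi_t = \Phi$, hence $f_t = g_t\circ\Psi_t = f\circ\Psi_t = f\circ\Phi$ on $t^{-1}K$ — wait, more carefully: on $t^{-1}K$, $\Psi_t=\Phi$ so $f_t\circ\Psi_t^{-1}$ and $f$ agree where both are defined, giving $f_t = f\circ\Phi$ on $t^{-1}K\cap\Phi^{-1}(\Omega) = t^{-1}K$ (since $\Phi$ maps into $\Omega$). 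Thus for all sufficiently small $t\ne 0$, $\sup_{\widehat{\Phi(K)}}|f| = \sup_{\Phi^{-1}(\widehat{\Phi(K)})}|f\circ\Phi| = \sup_{\Phi^{-1}(\widehat{\Phi(K)})}|f_t|$. Now $f_t$ is $J_t$-holomorphic on all of $\C^2$, so $|f_t|$ on the $J_t$-convex hull $\wh{K}_{J_t} = \Psi_t^{-1}(\widehat{\Phi(K)})$ is bounded by $\sup_K |f_t| = \sup_K |f\circ\Phi| = \sup_{\Phi(K)}|f|$, a single finite constant $M$ independent of $t$. Pushing forward by $\Psi_t=\Phi$ on this set, $\sup_{\widehat{\Phi(K)}\cap\Omega}|f| = \sup_{\Psi_t(\wh K_{J_t})\cap\Omega}|f| \le M$; the only subtlety is that $\wh K_{J_t}$ may poke outside $t^{-1}K$ where $\Psi_t\ne\Phi$, but on $\widehat{\Phi(K)}\cap\Phi(t^{-1}K) = \widehat{\Phi(K)}$ itself (for $t$ small, since $\widehat{\Phi(K)}\subset\Phi(\C^2)$ need not hold — indeed it does not, that is the whole point) — so I must instead restrict to $\widehat{\Phi(K)}\cap\Omega$ and use that $\Phi^{-1}$ is defined there, $\Phi^{-1}(\widehat{\Phi(K)}\cap\Omega)\subset\C^2$, and by the maximum principle on $(\C^2,J_t)$ applied to $f_t$ with $\Psi_t^{-1}=\Phi^{-1}$ on the relevant part, $|f|\le M$ there. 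Finally, $\widehat{\Phi(K)}\cap\Omega$ is not relatively compact in $\Omega$: if it were, its closure in $\Omega$ would be compact, but $\widehat{\Phi(K)} \ni 0 \notin \Omega = \Phi(\C^2)\subset\C^*\times\C$, and one checks $0$ is a limit in $\C^2$ of points of $\widehat{\Phi(K)}\cap\Omega$ — equivalently, $\widehat{\Phi(K)}\not\subset\Omega$ forces $\widehat{\Phi(K)}\cap\Omega$ to have a boundary point in $\widehat{\Phi(K)}\setminus\Omega$, so no compact subset of $\Omega$ contains it.

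The main obstacle, and the step requiring real care, is the bookkeeping of \emph{where} $\Psi_t$ equals $\Phi$ versus where it does not: the hull $\wh K_{J_t}$ lives partly outside $t^{-1}K$, so the identity $\Psi_t(\wh K_{J_t}) = \widehat{\Phi(K)}$ must be applied only after confirming (from \eqref{eq:hullJt}) that it is literally an equality of sets in $\C^2$, and then the bound $|f_t|\le M$ transported along $\Psi_t^{-1}$ gives $|f| \le M$ on $\widehat{\Phi(K)}$ wherever $f$ is defined, i.e. on $\widehat{\Phi(K)}\cap\Omega$. Once the equality $\Psi_t(\wh K_{J_t})=\widehat{\Phi(K)}$ is in hand, everything else is the maximum principle plus elementary point-set topology, so I would present it crisply: boundedness of $f$ on $\widehat{\Phi(K)}\cap\Omega$ is immediate from $\sup_{\wh K_{J_t}}|f_t|\le\sup_K|f_t|=\sup_{\Phi(K)}|f|$, and non-relative-compactness is immediate from $0\in\widehat{\Phi(K)}\setminus\Omega$ being adherent to $\widehat{\Phi(K)}\cap\Omega$.
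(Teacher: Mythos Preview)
There is a genuine gap. The identity $\sup_K|f_t|=\sup_{\Phi(K)}|f|$ (equivalently, your assertion that ``$f_t=f\circ\Phi$ on $t^{-1}K$'') is unjustified. The hypothesis only says that $t\mapsto f_t$ is a continuous family with $f_0=f\circ\Phi$; it gives no formula for $f_t$ when $t\ne 0$. Knowing that $\Psi_t=\Phi$ on $t^{-1}K$ tells you only that $J_t=J_0$ there, hence that $f_t$ is standard-holomorphic on $t^{-1}K$; it does \emph{not} force $f_t$ to equal $f\circ\Phi$ there. Consequently $\sup_K|f_t|$ is not a constant in $t$, and the chain
\[
\sup_{\widehat{\Phi(K)}\cap\Omega}|f|\le\sup_{\wh K_{J_t}}|f_t|\le\sup_K|f_t|=\sup_{\Phi(K)}|f|
\]
breaks at both the first inequality (you have no relation between $f$ and $f_t$ for $t\ne 0$) and the final equality.

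The fix is to pass to the limit, and this is exactly what the paper does. Fix $p\in\widehat{\Phi(K)}\cap\Omega$ and set $p_0=\Phi^{-1}(p)$. For $t$ small enough one has $p_0\in t^{-1}K$, hence $\Psi_t(p_0)=\Phi(p_0)=p\in\widehat{\Phi(K)}$, so $p_0\in\Psi_t^{-1}(\widehat{\Phi(K)})=\wh K_{J_t}$ by \eqref{eq:hullJt}. Therefore $|f_t(p_0)|\le\max_K|f_t|$. Now let $t\to 0$: continuity gives $|f_0(p_0)|\le\max_K|f_0|$, i.e.\ $|f(p)|\le\max_{\Phi(K)}|f|$. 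This is the missing step; once you insert it, your outline becomes the paper's proof. For non-relative-compactness, your sketch is essentially right but you should say explicitly that $\widehat{\Phi(K)}$ is connected (the polynomial hull of the connected set $\Phi(K)$), so that $\widehat{\Phi(K)}\cap\Omega$ cannot be simultaneously open and closed in $\widehat{\Phi(K)}$ without being all of it.
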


\begin{proof}
From $\wh{K}_{J_t} =\Psi_t^{-1} (\widehat{\Phi(K)})$ (see \eqref{eq:hullJt}) 
we get $\wh{K}_{J_t} \cap t^{-1}K = \Psi_t^{-1} 
(\widehat{\Phi(K)}) \cap t^{-1}K $ for $t\ne0$. 
Since $\Psi_t=\Phi$ on $t^{-1}K$, it follows that 
\[
	\Psi_t(\wh{K}_{J_t} \cap t^{-1}K) = 
	\widehat{\Phi(K)} \cap \Phi(t^{-1}K).
\]
When $t\to 0$, the set on the right hand side increases
to $\widehat{\Phi(K)}\cap \Omega$. 
Choose a point $p\in \widehat{\Phi(K)}\cap \Omega$; hence
$p \in \widehat{\Phi(K)} \cap \Phi(t^{-1}K)$ for all small enough $t\ne 0$.
Note that $p_t:=\Psi_t^{-1}(p)\in \wh{K}_{J_t} \cap t^{-1}K$
converges to $p_0=\Phi^{-1}(p)$ as $t\to 0$.
Let $f\in \Oscr(\Omega)$. Suppose that there is a continuous 
family of holomorphic functions $f_t\in \Oscr_{J_t}(\C^2)$ for 
$t$ near $0$ such that $f_0=f\circ \Phi$.
Since $p_t\in \wh{K}_{J_t}$, we have 
$|f_t(p_t)|\le \max_K |f_t|$. Letting $t\to 0$ gives
$|f_0(p_0)|\le \max_K |f_0|$, which is equivalent to
$|f(p)| \le \max_{\Phi(K)} |f|$. This shows that 
$f$ is bounded on the set $\widehat{\Phi(K)}\cap \Omega$
as claimed.  
\end{proof}

\begin{remark}
The construction in the proof of Theorem \ref{th:nontame} works 
on any contractible Stein manifold $X$ which admits an injective holomorphic 
map $\Phi:X\to X$ such that, for some compact subset 
$K\subset X$ with nonempty interior, we have that 
$\widehat {\Phi(K)} \not\subset \Phi(X)$.
Besides $\C^n$, an example is any bounded convex domain 
$X$ in $\C^n$ for $n>1$. Indeed, assume that $X$ is such, and 
let $K\subset X$ be a compact set with nonempty interior.
By translation we may assume that $0\in \mathring K$.
Let $\Phi:\C^n\to\C^n$ be an injective holomorphic map
as in the proof of Theorem \ref{th:nontame}, satisfying 
$\wh{\Phi(K)}\not\subset\Phi(\C^n)$. Set $L=\Phi(K)$.
For any $s>0$ we then have $\wh{sL} = s\, \wh{L} \not\subset s \Phi(\C^n)$.
Replacing $\Phi$ by $s\Phi$ for a suitable $s>0$ 
we ensure that $\Phi(X)\subset X$. It follows 
that $\wh{\Phi(K)}\not\subset\Phi(X)$. 
However, we do not know whether the phenomenon in 
Theorem \ref{th:nontame} can occur on every Stein manifold $X$ 
with $\dim_\C X >1$.
\end{remark}

%
%
%	TAME FAMILIES OF STEIN STRUCTURES
%
%
\section{Tame families of Stein structures}\label{sec:tame}

Assume that $T$ is a topological space, 
$X$ is a smooth open manifold of even dimension, $\pi:T\times X\to T$ 
is the projection $\pi(t,x)=t$, and $\mathscr{J}=\{J_t\}_{t\in T}$ 
a continuous family of integrable complex structures on $X$. 
We introduce a tameness condition on $\mathscr{J}$ 
which excludes the pathology in Theorem \ref{th:nontame}; 
see Definition \ref{def:tame}. If $T$ is locally compact and
Hausdorff and $J_t$ is Stein for every $t\in T$,
tameness is characterised in terms of 
properness over $T$ of the family of $J_t$-convex hulls of any
compact set in $X$; see Proposition \ref{prop:usc}. 
If the Stein structures $J_t$ are sufficiently regular, 
tameness is equivalent to 
local boundedness of the family of $J_t$-convex hulls
of any compact set; see Proposition \ref{prop:boundedhulls}. 
If $T$ is locally compact, paracompact and Hausdorff then tameness
is characterised by the existence of a continuous 
family of strongly $J_t$-plurisubharmonic exhaustion functions 
on $X$; see Theorem \ref{th:tame}.  
We conclude the section with examples and constructions 
of tame families of Stein structures.

%
%  TAME FAMILIES
%
\begin{definition} \label{def:tame}
A family $\mathscr{J}=\{J_t\}_{t\in T}$ of  
complex structures on $X$ is {\em tame} at a point 
$t_0\in T$ if for every compact set $K\subset X$ 
and open set $U\subset X$ containing $\wh K_{\! J_{t_0}}$ 
there is a neighbourhood $T_0\subset T$ of $t_0$ such that
$\wh K_{\! J_{t}} \subset U$ holds for all $t\in T_0$.
The family $\mathscr{J}$ is tame if it is tame at every
point $t_0\in T$.
\end{definition}

\begin{figure}[h]
  \centering
\includegraphics[scale=0.97]{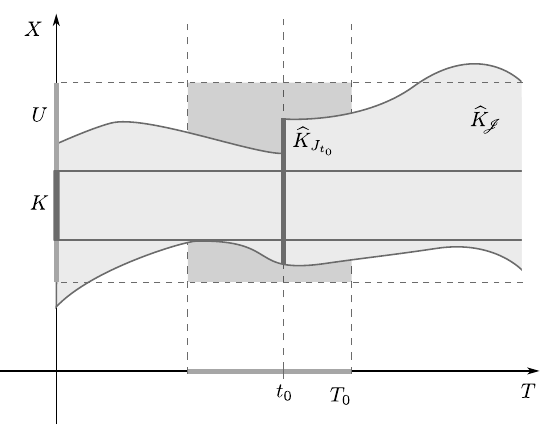}

\caption{An upper semicontinuous family of hulls $\wh{K}_{\! J_t}$.}
\end{figure}

Any family of complex structures on a smooth surface $X$ 
is tame since the hull of a compact set 
does not depend on the choice of a complex structure on $X$. 
The same holds if $X$ is compact and connected.
Theorem \ref{th:nontame} gives smooth nontame families 
of Stein structures on $\R^{2n}$ for any $n>1$.

Recall that a continuous map $S \to T$ of topological spaces is 
said to be proper if the preimage of any compact set
in $T$ is compact.

%
%  TAME = UPPER SEMICONTINUOUS HULLS
%
\begin{proposition}\label{prop:usc}
If $T$ is a locally compact Hausdorff space then 
the following conditions on a continuous 
family $\mathscr{J}=\{J_t\}_{t\in T}$ of Stein structures 
on $X$ are equivalent.
\begin{enumerate}[\rm (a)]
\item The family $\mathscr{J}$ is tame.
\item For every compact subset $K\subset X$, 
its $\mathscr{J}$-convex hull 
\begin{equation}\label{eq:whK}
	\wh K_{\!\!\mathscr{J}} =
	\bigcup_{t\in T} \, \{t\}\times \wh{K}_{\! J_t} \subset T\times X
\end{equation}
is such that the projection $\pi:\wh K_{\!\!\mathscr{J}}\to T$ is proper.
\end{enumerate}
\end{proposition}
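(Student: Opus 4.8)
The plan is to establish the two implications by pure point-set topology, the only input from complex analysis being that on a Stein manifold $(X,J_t)$ the hull $\wh K_{\! J_t}$ of a compact set $K$ is itself compact.

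For the implication (a) $\Rightarrow$ (b) I would first observe that tameness makes $\wh K_{\!\!\mathscr{J}}$ a \emph{closed} subset of $T\times X$: given $(t_*,x_*)$ with $x_*\notin \wh K_{\! J_{t_*}}$, the hull $\wh K_{\! J_{t_*}}$ is a compact subset of the Hausdorff space $X$ not containing $x_*$, so it can be separated from $x_*$ by disjoint open sets $U\supset \wh K_{\! J_{t_*}}$ and $W\ni x_*$; tameness at $t_*$ then yields a neighbourhood $T_*$ of $t_*$ with $\wh K_{\! J_t}\subset U$ for all $t\in T_*$, and $T_*\times W$ is a neighbourhood of $(t_*,x_*)$ disjoint from $\wh K_{\!\!\mathscr{J}}$. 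To obtain properness of $\pi$, I would fix a compact $L\subset T$; for each $t_0\in L$ choose (using local compactness of the manifold $X$) a relatively compact open $V_{t_0}\supset \wh K_{\! J_{t_0}}$, and apply tameness to get a neighbourhood $W_{t_0}$ of $t_0$ with $\wh K_{\! J_t}\subset V_{t_0}$ for $t\in W_{t_0}$. A finite subcover $W_{t_1},\ldots,W_{t_m}$ of $L$ gives $\pi^{-1}(L)=\wh K_{\!\!\mathscr{J}}\cap(L\times X)\subset L\times K'$ with $K'=\overline{V_{t_1}}\cup\cdots\cup\overline{V_{t_m}}$ compact, and since $\pi^{-1}(L)$ is closed in the compact set $L\times K'$ it is compact.

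For (b) $\Rightarrow$ (a), I would fix $t_0\in T$, a compact $K\subset X$, and an open set $U\supset \wh K_{\! J_{t_0}}$. Since $T$ is locally compact and Hausdorff, pick a compact neighbourhood $L$ of $t_0$. By (b) the set $\pi^{-1}(L)=\wh K_{\!\!\mathscr{J}}\cap(L\times X)$ is compact, hence so is its closed subset $C:=\pi^{-1}(L)\cap\bigl(L\times(X\setminus U)\bigr)$. Because $\{t_0\}\times\wh K_{\! J_{t_0}}\subset\{t_0\}\times U$, the point $t_0$ does not lie in $\pi(C)$; as $\pi(C)$ is compact in the Hausdorff space $T$, it is closed, so $T_0:=\mathrm{int}(L)\setminus\pi(C)$ is an open neighbourhood of $t_0$. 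For every $t\in T_0$ and every $x\in\wh K_{\! J_t}$ we have $(t,x)\in\pi^{-1}(L)$ while $t\notin\pi(C)$, which forces $x\in U$; thus $\wh K_{\! J_t}\subset U$ for all $t\in T_0$, i.e.\ $\mathscr{J}$ is tame at $t_0$. Since $t_0$ was arbitrary, $\mathscr{J}$ is tame.

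I do not expect a genuine obstacle here; once the statement is reformulated topologically the argument is elementary. The points that need care are the separation of a compact fibre hull from an exterior point of $X$ (which yields closedness of $\wh K_{\!\!\mathscr{J}}$ with no metrizability assumption on $T$ or $X$) and the use of local compactness of $T$ to produce the compact neighbourhood $L$ in the converse; Steinness enters only to guarantee that each $\wh K_{\! J_t}$ is compact.
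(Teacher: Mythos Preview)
Your proof is correct and follows essentially the same approach as the paper: both directions are handled by elementary point-set topology, using compactness of the hulls $\wh K_{\! J_t}$ and local compactness of $T$. The only minor difference is in (b) $\Rightarrow$ (a): the paper argues by contradiction with a net in the compact set $\pi^{-1}(T_0)\cap\wh K_{\!\!\mathscr{J}}$, whereas you give a direct argument via the closed projection $\pi(C)$; your version is arguably cleaner and avoids nets altogether.
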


\begin{proof}
Assume that $\Jscr$ is tame. It is easily seen
that $\wh K_{\!\!\mathscr{J}}$ is then closed in $T\times X$. 
Let $T'\subset T$ be compact. Given $t\in T'$, 
pick a neighbourhood $U_t \subset X$ of $\wh K_{\! J_{t}}$ 
with compact closure $\overline U_t$. 
Tameness gives a compact neighbourhood $T_t\subset T$ 
of $t$ such that the $\pi^{-1}(T_t)\cap \wh K_{\!\!\mathscr{J}}$
is a closed subset of $T_t\times \overline U_t$, hence compact. 
The compact set $T'$ is covered by finitely many sets 
$T_{t_j}$ obtained in this way, and it follows that 
$\pi^{-1}(T') \cap \wh K_{\!\!\mathscr{J}}$ is compact. 
This proves (a) $\Rightarrow$ (b). Conversely, assume that 
$\pi:\wh K_{\!\!\mathscr{J}}\to T$ is proper. 
Let $t_0\in T$ and $U \subset X$ be an open set
containing $\wh K_{\! J_{t_0}}$. Choose a compact neighbourhood
$T_0\subset T$ of $t_0$. Then, $\pi^{-1}(T_0)\cap \wh K_{\!\!\mathscr{J}}$
is compact, and hence closed in $T_0\times X$. 
If the condition in Definition \ref{def:tame} fails at $t_0$, there is a net 
$\{(t_j,x_j)\}_{j\in A}\subset \wh K_{\!\!\mathscr{J}}$ 
converging to $(t_0,x_0)$ with $x_0\in X\setminus U$. 
Since $\pi^{-1}(T_0)\cap \wh K_{\!\!\mathscr{J}}$
is closed, it contains $(t_0,x_0)$, a contradiction.
%(If $T$ is first-countable, we can use sequences in the last argument.) 
Hence, $\Jscr$ is tame.
\end{proof}

%
% Relatively bounded hulls imply tameness
%
The nontame families $\Jscr$ of Stein structures in Theorem 
\ref{th:nontame} are such that the family of hulls $\wh K_{J_t}$
of some compact $K\subset X$ is not locally bounded at some $t_0\in T$,
and the hull $\wh K_{\!\!\mathscr{J}}$ \eqref{eq:whK} 
fails to be closed. We now show that this happens in any nontame 
family of sufficiently regular Stein structures.

%
%  LOCALLY BOUNDED HULLS IMPLIES TAMENESS
%
\begin{proposition}\label{prop:boundedhulls}
Let $X$ be a smooth manifold of dimension $2n$ 
and $\Jscr=\{J_t\}_{t\in T}$ be a continuous 
family of Stein structures of class $\Cscr^{r}$ on $X$, where 
$r\geq 2n+9$ is an integer, such that for every compact set $K\subset X$
the family of hulls $\wh K_{J_t}$ is locally bounded.
Then the family $\Jscr$ is tame. 
\end{proposition}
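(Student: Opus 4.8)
The plan is to prove tameness at an arbitrary point $t_0\in T$ by producing, on a fixed compact strongly pseudoconvex domain, $J_t$-holomorphic functions that approximate given $J_{t_0}$-holomorphic separating functions (using Kohn's $\dibar$-theory with parameters, Theorem~\ref{th:dibar}), and then upgrading these to global functions via the ordinary Oka--Weil theorem on $(X,J_t)$. Fix a compact $K\subset X$ and an open $U\subset X$ with $\wh K_{\!J_{t_0}}\subset U$. By local boundedness there are a neighbourhood $T_1\ni t_0$ and a compact set $L\supset K$ with $\wh K_{\!J_t}\subset L$ for all $t\in T_1$. If $L\subset U$ we are done, so assume $C:=L\setminus U$ is nonempty; it is compact and disjoint from $\wh K_{\!J_{t_0}}$. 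Since $(X,J_{t_0})$ is Stein, I would choose a relatively compact, strongly $J_{t_0}$-pseudoconvex domain $D\Subset X$ with boundary of class $\Cscr^{2n+9}$ and with $L\Subset D$; as strong pseudoconvexity is an open condition on the complex structure over the compact set $\bar D$, after shrinking $T_1$ we may assume that $D$ is strongly $J_t$-pseudoconvex with Stein interior for every $t\in T_1$.

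Next I would fix separating data and make it fibrewise holomorphic. For each $p\in C$ there is $h^{(p)}\in\Oscr_{J_{t_0}}(X)$ with $\max_K|h^{(p)}|<1<|h^{(p)}(p)|$; covering $C$ by the open sets $\{|h^{(p)}|>1\}$ and extracting a finite subcover gives $h_1,\dots,h_m\in\Oscr_{J_{t_0}}(X)$ with $\max_K|h_i|<1$ for all $i$ and $\max_i|h_i(p)|>1$ for every $p\in C$; set $1-\eta:=\max_i\max_K|h_i|$ and $1+\delta:=\min_{p\in C}\max_i|h_i(p)|$, so $\eta,\delta>0$. Each $h_i$ is $J_{t_0}$-holomorphic, hence smooth, so $\beta_i^t:=\dibar_{J_t}(h_i|_{\bar D})$ is a $(0,1)$-form on $\bar D$ with respect to $J_t$, of class $\Cscr^r$ and continuous in $t\in T_1$, with $\dibar_{J_t}\beta_i^t=\dibar_{J_t}^2 h_i=0$ by integrability and $\beta_i^{t_0}=\dibar_{J_{t_0}}h_i=0$. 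Applying Theorem~\ref{th:dibar}(d) with $k=0$ and $s=n+2$ (so $2s+5=2n+9\le r$ and $s>k+n+1$), for every $t\in T_1$ the Kohn solution $\psi_i^t:=K_t\beta_i^t\in\Cscr^0(D)$ of $\dibar_{J_t}\psi_i^t=\beta_i^t$ depends continuously on $t$, and $\psi_i^{t_0}=0$. Hence $g_i^t:=h_i-\psi_i^t$ is $J_t$-holomorphic on $D$, continuous in $t$ in $\Cscr^0(D)$, and $g_i^{t_0}=h_i|_D$, so $g_i^t\to h_i|_D$ uniformly on compact subsets of $D$ as $t\to t_0$. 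It follows that there is a neighbourhood $T_0\subset T_1$ of $t_0$ such that for every $t\in T_0$ one has $|g_i^t|<1-\eta/2$ on $K$ for all $i$ and $\max_i|g_i^t(p)|>1+\delta/2$ for every $p\in C$.

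Finally I would globalise. Fix $t\in T_0$ and suppose, for a contradiction, that some $p\in C$ lies in $A:=\wh K_{\!J_t}$. Then $A$ is compact and $\Oscr_{J_t}(X)$-convex, and $A\subset L\Subset D$, so every $g_i^t$ is $J_t$-holomorphic on the open set $D\supset A$. By the Oka--Weil theorem on the Stein manifold $(X,J_t)$ there is, for each $i$, a function $\wt g_i^t\in\Oscr_{J_t}(X)$ with $|\wt g_i^t-g_i^t|<\min\{\eta,\delta\}/4$ on $A$. Since $K\subset A$ and $p\in A$, the estimates of the previous paragraph give $\max_K|\wt g_i^t|<1$ for all $i$ while $|\wt g_{i_0}^t(p)|>1$ for the index $i_0$ attaining $\max_i|g_i^t(p)|$, contradicting $p\in\wh K_{\!J_t}$. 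Therefore $\wh K_{\!J_t}\cap(L\setminus U)=\emptyset$ for all $t\in T_0$, and since $\wh K_{\!J_t}\subset L$ this yields $\wh K_{\!J_t}\subset U$; as $t_0$ was arbitrary, $\Jscr$ is tame.

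The main obstacle is the passage from local to global holomorphic functions: Kohn's method with parameters delivers $J_t$-holomorphic functions only on the fixed domain $D$, and these control a priori only the hull of $K$ in $(D,J_t)$, which may be strictly smaller than $\wh K_{\!J_t}$. Local boundedness is precisely what forces $\wh K_{\!J_t}$ to remain inside $D$, so that the single-parameter Oka--Weil theorem on $(X,J_t)$ can be applied to bridge the gap; this is exactly the mechanism that breaks down for the nontame families of Theorem~\ref{th:nontame}, whose hulls escape every relatively compact set.
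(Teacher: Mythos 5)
Your proof is correct and follows essentially the same route as the paper's: local boundedness confines the hulls $\wh K_{J_t}$ to a fixed strongly pseudoconvex domain, Theorem~\ref{th:dibar}(d) (with $k=0$, $s=n+2$) converts $J_{t_0}$-holomorphic separating functions into a continuous family of $J_t$-holomorphic ones there, and the classical Oka--Weil theorem on $(X,J_t)$ globalises them to contradict hull membership. The only difference is organisational -- you separate the whole compact set $L\setminus U$ at once with finitely many functions, where the paper argues point by point that $\widehat K_{\!\!\Jscr}$ is closed and then covers -- but the key ingredients and the mechanism are identical.
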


\begin{proof}
Fix a compact set $K\subset X$ and a point 
$(t_0,x_0)\notin\widehat K_{\!\!\Jscr}$. We shall find a neighbourhood
of $(t_0,x_0)$ disjoint from $\widehat K_{\!\!\Jscr}$. 
By the assumption, there are a neighbourhood 
$T_0\subset T$ of $t_0$ and a domain $\Omega\Subset X$ 
such that $\wh K_{J_t} \subset \Omega$
holds for all $t\in T_0$, and $x_0\in \Omega$. 
We may choose $\Omega$ to be smoothly bounded  
and strongly $J_{t_0}$-pseudoconvex. 
Since $x_0\notin\wh K_{J_{t_0}}$, there is a 
$J_{t_0}$-holomorphic function $f$ on $X$ such that 
$|f(x_0)|>1+3\varepsilon$ and  $\max_{x\in K}|f(x)|\leq 1-\varepsilon$ 
for some $\varepsilon>0$. Let $U\subset \Omega$ 
be a compact neighbourhood of $x_0$ such that $|f(x)|> 1+3\varepsilon$ 
for all $x\in U$. Shrinking $T_0$ around $t_0$ if necessary,
the proof of Theorem \ref{th:Hamilton} applied with $k=0$ 
(see also Theorem \ref{th:dibar} (d)) % and Proposition \ref{prop:Sobolev}) 
gives a continuous family of functions 
$\{u_t\}_{t\in T_0}\subset \Cscr^0(\Omega)$ 
such that $\dibar_{J_t}u_t= \dibar_{J_t}f$ and $u_{t_0}=0$. 
Then $f_t=f-u_t$ is $J_t$-holomorphic on $\Omega$ and 
continuous in $t\in T_0$, with $f_{t_0}=f$. 
Hence, there is a neighbourhood $T_1\subset T_0$ 
of $t_0$ such that $\min_{x\in U} |f_t(x)|>1+2\varepsilon$ 
and $\max_{x\in K}|f_t(x)|\leq 1$ hold for all $t\in T_1$. 
We claim that $T_1\times U$ is disjoint from $\widehat K_{\!\!\Jscr}$. 
If not, choose a point 
$(t',x')\in  (T_1\times U) \cap \widehat K_{\!\!\Jscr}$, so 
$x'\in \wh K_{J_{t'}} \subset \Omega$. The Oka--Weil theorem gives a 
$J_{t'}$-holomorphic function $F$ on $X$ such that 
$|F-f_{t'}|<\varepsilon$ on $\wh K_{J_{t'}}$, which implies 
$|F(x')|>\max_{x\in K}|F(x)|$ in contradiction to $x'\in \wh K_{J_{t'}}$.
This proves the claim and shows that $\widehat K_{\!\!\Jscr}$ is closed.
Given an open neighbourhood $V\Subset \Omega$ of 
$\wh K_{J_{t_0}}$, the above argument shows that the compact set 
$\{t_0\} \times (\overline \Omega \setminus V)$ can be covered
by finitely many open sets 
$T_j \times U_j\subset (T\times X)\setminus \widehat K_{\!\!\Jscr}$ 
$(j=1,\ldots,m)$. Taking $T_0=\bigcap_{j=1}^m T_j$ it follows that 
$\pi^{-1}(T_0) \cap \widehat K_{\!\!\Jscr} \subset T_0\times V$,
which shows that $\Jscr$ is tame. 
\end{proof}

In the remainder of the section, 
we assume that the parameter $T$ is locally compact 
Hausdorff. A closed subset $K\subset T\times X$ is  
called {\em proper over $T$}, or simply {\em proper}, 
if the restricted projection $\pi|_K:K\to T$ is proper. 
The proof of Proposition \ref{prop:usc}
shows that $K$ is proper if and only if the fibres
$K_t=\{x\in X: (t,x)\in K\}$, $t\in T$, are compact 
and upper semicontinuous. 
Given a subset $K\subset T\times X$ with compact fibres $K_t$,
we define its $\mathscr{J}$-convex hull by 
\begin{equation}\label{eq:whK2}
	\wh K_{\!\!\mathscr{J}}
	=\big\{(t,x)\in T\times X: x\in (\wh{K_t})_{\! J_t}\big\}. 
\end{equation}
(In \eqref{eq:whK} we used the same notation for $K\subset X$ 
to mean $(\wh{T\!\times \!K})_{\!\!\mathscr{J}}$, but this should not cause 
any confusion.) A proper subset $K\subset T\times X$ 
is said to be $\mathscr{J}$-convex 
if $K= \wh K_{\!\!\mathscr{J}}$. 

%
% PROPER SUBSETS
%
\begin{lemma}\label{lem:tame}
If $K\subset T\times X$ is proper and 
$\mathscr{J}$ is a tame family of Stein structures on $X$
then the hull $\wh K_{\!\!\mathscr{J}}$ 
\eqref{eq:whK2} is also proper.
\end{lemma}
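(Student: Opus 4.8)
The plan is to verify the two properties which, by the criterion recalled in the paragraph preceding the lemma (extracted from the proof of Proposition \ref{prop:usc}), are equivalent to properness over $T$ of a subset of $T\times X$: namely that the fibres $(\wh{K_t})_{\! J_t}$ of $\wh K_{\!\!\mathscr{J}}$ are compact, and that they vary upper semicontinuously with $t\in T$. Compactness is immediate: $\mathscr{J}$ is a family of Stein structures, so $(X,J_t)$ is a Stein manifold for every $t$, and $K_t$ is compact because $K$ is proper; hence its $J_t$-convex hull $(\wh{K_t})_{\! J_t}$ is compact. So the whole content is the upper semicontinuity statement.

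To prove upper semicontinuity at a point $t_0\in T$, fix an open set $U\subset X$ with $(\wh{K_{t_0}})_{\! J_{t_0}}\subset U$. The point at which naive monotonicity of hulls fails is that enclosing $K_t$ in a crude compact set $L'$ only gives $(\wh{K_t})_{\! J_t}\subset (\wh{L'})_{\! J_t}$, and $(\wh{L'})_{\! J_{t_0}}$ need not lie in $U$, so tameness cannot be applied to the pair $(L',U)$. The remedy is to squeeze $K_{t_0}$ into an already $J_{t_0}$-convex set: since $L_0:=(\wh{K_{t_0}})_{\! J_{t_0}}$ is a compact $J_{t_0}$-convex subset of the Stein manifold $(X,J_{t_0})$, it admits a neighbourhood basis of compact $J_{t_0}$-convex sets (standard Stein theory; see, e.g., H\"ormander \cite{Hormander1990} or \cite[Chap.\ 3]{Forstneric2017E}). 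Choose $L$ in this basis with $L_0$ contained in the interior of $L$ and $L\subset U$. Because $K_{t_0}\subset L_0\subset L$ and the fibres of the proper set $K$ are upper semicontinuous, there is a neighbourhood $T_1\subset T$ of $t_0$ with $K_t\subset L$ for all $t\in T_1$. Because $L$ is $J_{t_0}$-convex, $(\wh L)_{\! J_{t_0}}=L\subset U$, so tameness of $\mathscr{J}$ at $t_0$ (Definition \ref{def:tame}, applied to the compact set $L$ and the open set $U$) yields a neighbourhood $T_2\subset T$ of $t_0$ with $(\wh L)_{\! J_t}\subset U$ for all $t\in T_2$.

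Then, for $t$ in the neighbourhood $T_0:=T_1\cap T_2$ of $t_0$, monotonicity of the holomorphic hull gives $(\wh{K_t})_{\! J_t}\subset (\wh L)_{\! J_t}\subset U$, which is exactly upper semicontinuity of $t\mapsto (\wh{K_t})_{\! J_t}$ at $t_0$. Since $t_0$ was arbitrary, combining this with the compactness of the fibres established above and the criterion from the proof of Proposition \ref{prop:usc} shows that $\wh K_{\!\!\mathscr{J}}$ is proper over $T$. The only nonroutine ingredient is the existence of the $J_{t_0}$-convex neighbourhood $L$ of $L_0$ inside $U$ (a standard fact about holomorphically convex compacts in Stein manifolds); the rest is monotonicity of hulls together with the definitions of tameness, properness, and upper semicontinuity, so I expect no real obstacle beyond getting that sandwich step stated cleanly.
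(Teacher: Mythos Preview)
Your proof is correct and follows essentially the same route as the paper's: both arguments produce a compact $J_{t_0}$-convex set $L$ with $K_{t_0}\subset\mathring L\subset L\subset U$, use properness of $K$ to get $K_t\subset L$ for $t$ near $t_0$, and then apply tameness to $L$ together with monotonicity of hulls. The only cosmetic difference is that the paper constructs $L$ explicitly as the sublevel set $\{\rho\le 0\}$ of a strongly $J_{t_0}$-plurisubharmonic exhaustion function provided by \cite[Theorem 5.1.6]{Hormander1990}, whereas you invoke the equivalent standard fact that $J_{t_0}$-convex compacts admit a neighbourhood basis of $J_{t_0}$-convex compacts.
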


\begin{proof}
Fix $t_0\in T$ and an open set $U\subset X$ with 
$(\wh K_{t_0}\!)_{J_{t_0}} \subset U$. 
By \cite[Theorem 5.1.6]{Hormander1990} 
there is a strongly $J_{t_0}$-plurisubharmonic exhaustion function 
$\rho:X\to \R$ such that $\rho<0$ on $(\wh K_{t_0})_{\! J_{t_0}}$ and 
$\rho>0$ on $X\setminus U$. The compact set 
$L=\{\rho\le 0\}$ is then $J_{t_0}$-convex and satisfies
$K_{t_0} \subset \mathring L\subset L\subset U$.
Since $K$ is proper, there is a neighbourhood $T_0\subset T$ 
of $t_0$ such that $K_t\subset L$ for all $t_0\in T$. 
Since $\Jscr$ is tame, we have that
$(\wh K_t)_{J_t} \subset {\wh L}_{J_t} \subset U$ 
for all $t$ near $t_0$, so $\wh K_{\!\!\mathscr{J}}$ is proper.
\end{proof}

We have the following characterisation 
of tameness in terms of families of strongly $J_t$-plurisubharmonic exhaustion functions on $X$ for $t\in T$.

%
%  CHARACTERISATIONS OF TAMENESS
%
\begin{theorem}\label{th:tame}
Assume that $X$ is a smooth manifold, $T$ is a locally
compact Hausdorff space, and 
$\mathscr{J}=\{J_t\}_{t\in T}$ is a continuous family of integrable Stein 
structures on $X$ of local H\"older class $\Cscr^{0,(k,\alpha)}(T\times X)$
for some $k\in\N$ and $0<\alpha<1$. The following 
conditions are equivalent.
\begin{enumerate}[\rm (a)]
\item The family $\mathscr{J}$ is tame. % (see Definition \ref{def:tame}).
\item For every $t_0\in T$ there are a neighbourhood $T_0\subset T$
of $t_0$ and a function $\rho:T_0\times X\to \R$ of class
$\Cscr^{0,k+1}$ such that $\rho(t,\cdotp)$ is a 
strongly $J_t$-plurisubharmonic exhaustion on $X$
for every $t\in T_0$. 
\end{enumerate}
If $T$ is paracompact then (b) is equivalent to the following:
\begin{enumerate}[\rm (c)]
\item There is a function $\rho:T\times X\to \R$ of class
$\Cscr^{0,k+1}$ such that $\rho_t=\rho(t,\cdotp)$ is a 
strongly $J_t$-plurisubharmonic exhaustion function on $X$
for every $t\in T$.  
\end{enumerate}
If $T$ is a $\Cscr^{l}$ manifold and $\Jscr$ is of local class 
$\Cscr^{l,(k,\alpha)}$, then $\rho$ can be chosen 
to be of class $\Cscr^{l,k+1}$. 
\end{theorem}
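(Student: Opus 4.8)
The plan is to prove the chain of implications (c) $\Rightarrow$ (b) $\Rightarrow$ (a) $\Rightarrow$ (b), with (b) $\Rightarrow$ (c) obtained by a partition-of-unity patching under the paracompactness hypothesis, and to track the extra regularity in $t$ (the $\Cscr^l$ case) at every step. The implication (c) $\Rightarrow$ (b) is trivial (restrict $\rho$ to $T_0\times X$ for any neighbourhood $T_0$). For (b) $\Rightarrow$ (a): fix $t_0\in T$, a compact set $K\subset X$ and an open set $U\supset \wh K_{\!J_{t_0}}$; using the local $\rho$ from (b) and the fact that $\{\rho(t_0,\cdotp)\le c\}$ is a $J_{t_0}$-convex compact exhaustion sublevel, choose $c$ large enough that $L:=\{\rho(t_0,\cdotp)\le c\}$ contains $K$ in its interior, then shrink $c$ near the boundary as in the proof of Lemma \ref{lem:tame} (via \cite[Theorem 5.1.6]{Hormander1990}) to sandwich $\wh K_{\!J_{t_0}}\subset \mathring L\subset L\subset U$. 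For $t$ near $t_0$ the function $\rho(t,\cdotp)$ remains strongly $J_t$-plurisubharmonic (open condition on a $\Cscr^2$ function, using $\Cscr^{0,k+1}$-continuity in $t$ with $k\ge 1$), so $\{\rho(t,\cdotp)\le c\}$ is $J_t$-convex; since $K\subset \{\rho(t,\cdotp)<c\}$ for $t$ close to $t_0$ (continuity and compactness), we get $\wh K_{\!J_t}\subset \{\rho(t,\cdotp)\le c\}\subset U$, which is exactly tameness at $t_0$.

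The heart of the matter is (a) $\Rightarrow$ (b): constructing, near a given $t_0$, a continuous-in-$t$, $\Cscr^{k+1}$-in-$x$ family of strongly $J_t$-plurisubharmonic exhaustions. Start from a strongly $J_{t_0}$-plurisubharmonic exhaustion $\rho_0$ on $X$ (Grauert) and a normal exhaustion of $X$ by smoothly bounded strongly $J_{t_0}$-pseudoconvex domains $\Omega_1\Subset\Omega_2\Subset\cdots$, chosen as sublevel sets $\{\rho_0<c_j\}$ with $c_j\to+\infty$, so that each $\overline{\Omega_j}$ is $J_{t_0}$-convex. By tameness at $t_0$, for each $j$ there is a neighbourhood $T_j\subset T$ of $t_0$ on which $\widehat{\overline{\Omega_j}}_{\!J_t}\subset \Omega_{j+1}$, and (shrinking $T_j$) on which $\Omega_{j+1}$ is still strongly $J_t$-pseudoconvex. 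The idea is to run a Grauert/Docquier--Grauert-type construction with parameters on each shell $\overline{\Omega_{j+1}}\setminus\Omega_j$: on $\overline{\Omega_{j+1}}$ one can produce, for $t$ near $t_0$, a $\Cscr^{k+1}$ strongly $J_t$-plurisubharmonic function $\sigma_t^{(j)}$ that exhausts $\Omega_{j+1}$ relative to $\Omega_j$ — here one invokes that $\overline{\Omega_{j+1}}$ is $J_t$-pseudoconvex and applies the stability machinery already developed (Theorem \ref{th:Hamilton}, Theorem \ref{th:dibar}) to transplant a fixed plurisubharmonic bump from the model structure $J_{t_0}$, or alternatively solves a parametrised $\dibar$-problem to correct $\rho_0$ into a $J_t$-plurisubharmonic function with continuous $t$-dependence. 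One then patches the $\sigma_t^{(j)}$ over $j$ by a fixed (in $t$) convex-combination/regularised-maximum procedure with rapidly growing weights, exactly as in the classical proof that a Stein manifold has a plurisubharmonic exhaustion, but keeping all cutoffs independent of $t$ so that continuity in $t$ is preserved; the weights are chosen so that on each compact set only finitely many terms contribute, and strong plurisubharmonicity is retained because a finite max of strongly plurisubharmonic functions, suitably smoothed, is again such. Finally, shrinking the parameter neighbourhood to a single $T_0\subset\bigcap_j T_j$ (using local compactness to get a nested countable basis at $t_0$, or a diagonal argument) yields the desired $\rho$ on $T_0\times X$. When $T$ is a $\Cscr^l$ manifold and $\Jscr$ is $\Cscr^{l,(k,\alpha)}$, the stability results quoted (and their proofs) deliver $\Cscr^{l}$-dependence in $t$, so each $\sigma_t^{(j)}$ and hence $\rho$ is $\Cscr^{l,k+1}$.

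For (b) $\Rightarrow$ (c) under paracompactness: cover $T$ by open sets $\{T_\lambda\}$ each carrying a function $\rho_\lambda:T_\lambda\times X\to\R$ as in (b), take a locally finite refinement and a subordinate partition of unity $\{\chi_\lambda\}$ on $T$, and set $\rho(t,x)=\sum_\lambda \chi_\lambda(t)\,\rho_\lambda(t,x)$. A convex combination (with $t$-dependent but $x$-independent coefficients) of strongly $J_t$-plurisubharmonic functions is strongly $J_t$-plurisubharmonic, and it is an exhaustion of $X$ because locally finitely many $\rho_\lambda(t,\cdotp)$ occur and each is an exhaustion; smoothness is $\Cscr^{0,k+1}$ (or $\Cscr^{l,k+1}$ if the $\chi_\lambda$ are $\Cscr^l$, which is possible since $T$ is a $\Cscr^l$ manifold). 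The main obstacle is the parametrised Grauert construction in (a) $\Rightarrow$ (b): one must ensure, simultaneously for all shells $\Omega_{j+1}\setminus\Omega_j$, that the local plurisubharmonic pieces depend continuously (resp.\ $\Cscr^l$) on $t$ and that the patching — necessarily with $t$-independent cutoffs to preserve continuity — still produces a global exhaustion; the natural device is to exploit that tameness gives, for each $j$, a fixed compact $J_t$-convex "core" $L_j$ with $\overline{\Omega_j}\subset\mathring L_j\subset L_j\subset\Omega_{j+1}$ for all $t\in T_j$, reducing the plurisubharmonic-bump construction to a finite, uniform-in-$t$ problem on each shell.
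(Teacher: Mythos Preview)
Your argument for (b)$\Rightarrow$(a) conflates two functions: the given $\rho$ from (b), whose sublevel sets $\{\rho(t_0,\cdotp)\le c\}$ containing $K$ need not lie in $U$, and the auxiliary function from \cite[Theorem 5.1.6]{Hormander1990}, which does separate $\wh K_{J_{t_0}}$ from $X\setminus U$ but is only $J_{t_0}$-plurisubharmonic. You need both at once. The paper fixes this by taking, for each $t$ near $t_0$, the maximum of a rescaled $\psi$ (from H\"ormander's theorem) with $\rho_t$: Lemma \ref{lem:spsh} makes $\psi$ strongly $J_t$-plurisubharmonic on a large compact set $V$, while the exhaustion property of $\rho_t$ ensures the max equals $\rho_t$ outside $V$, yielding a $J_t$-plurisubharmonic exhaustion that is negative on $K$ and positive on $X\setminus U$.

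The serious gap is in (a)$\Rightarrow$(b). Your scheme fixes $t_0$, builds $J_{t_0}$-pseudoconvex shells $\Omega_j$, and for each $j$ produces $\sigma_t^{(j)}$ on a neighbourhood $T_j$ of $t_0$ via Theorem \ref{th:Hamilton} or a parametric $\dibar$-correction. But the $T_j$'s shrink as $j\to\infty$: both Theorem \ref{th:Hamilton} and Lemma \ref{lem:spsh} give stability only on relatively compact sets, so the admissible parameter neighbourhood depends on $\Omega_j$, and nothing prevents $\bigcap_j T_j=\{t_0\}$. Neither local compactness nor a diagonal argument repairs this. Separately, Theorem \ref{th:Hamilton} needs $\Jscr$ of class $\Cscr^{0,r}$ with $r\ge 2k+2n+9$, far more than the assumed $\Cscr^{0,(k,\alpha)}$ with $k\ge 1$.

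The paper's construction avoids both issues and uses neither Hamilton nor $\dibar$. Since (b) is local in $t$, assume $T$ compact. At each inductive step $i$ and for every $t\in T$, \cite[Theorem 5.1.6]{Hormander1990} supplies a smooth $J_t$-plurisubharmonic $\phi_t\ge 0$ on $X$ that vanishes near the previous hull and makes the running sum exceed $c_i$ on $K^i\setminus K^{i-1}$; by Lemma \ref{lem:spsh} the same $\phi_t$ is strongly $J_s$-plurisubharmonic on the relevant compact for all $s$ in a neighbourhood $T_t$. Cover the compact $T$ by finitely many $T_{t_j}$ and set $\rho^i(t,x)=\xi(x)\sum_j\chi_j(t)\phi_{t_j}(x)$ with $\{\chi_j\}$ a partition of unity on $T$ and $\xi$ a cutoff on $X$. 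Each $\rho^i$ is defined on all of $T\times X$, and the sum $\rho=\sum_i\rho^i$ is locally stationary because $\rho^{i}$ vanishes on earlier hulls. The $\Cscr^{l,k+1}$ upgrade is free: take $\chi_j\in\Cscr^l(T)$, noting that the $\phi_{t_j}$ are $t$-independent.
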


Note that the $\pm \imath$-eigenspaces of $J_t$ depend 
algebraically on the coefficients of $J_t$ in a given smooth
frame on $TX$, and hence the operators $\di_{J_t},\ \dibar_{J_t}$,
and $d^c_{J_t}$ are as regular in 
$t\in T$ as the family $\mathscr{J}=\{J_t\}_{t\in T}$.
In the operator $dd^c_{J_t}$, the coefficients 
of $d^c_{J_t}$ get differentiated, and hence 
$dd^c_{J_t}$ depends continuously on $t\in T$
if the family $\Jscr$ is of class $\Cscr^{0,1}$. 
This implies the following observation.

\begin{lemma}\label{lem:spsh}
Assume that the family of complex structures $\Jscr=\{J_t\}_{t\in T}$
on $X$ is of local class $\Cscr^{0,(1,\alpha)}$, $0<\alpha<1$. 
Let $\phi$ be a $\Cscr^2$ strongly $J_{t_0}$-plurisubharmonic function
on a domain $V \subset X$ for some $t_0\in T$. Given
an open relatively compact subset $U\Subset V$, there is a 
neighbourhood $T_0\subset T$ of $t_0$ such that 
$\phi$ is strongly $J_t$-plurisubharmonic on $U$ for every $t\in T_0$.
\end{lemma}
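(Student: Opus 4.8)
\textbf{Proof plan for Lemma \ref{lem:spsh}.}

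The plan is to reduce the statement to a compactness argument for the Levi form. Fix a finite cover of $\overline U$ by smooth coordinate charts $\{V_i\}$ with $\overline U\subset\bigcup_i V_i\Subset V$, over each of which $TX$ is trivialised. In such a chart, the complex structure $J_t$ is given by a matrix-valued function $A_t(x)$, and the Levi form $dd^c_{J_t}\phi$ is obtained from $\phi$ and the first-order partial derivatives of $A_t$ by a fixed algebraic formula (as noted in the paragraph preceding the lemma). Since $\Jscr$ is of local class $\Cscr^{0,(1,\alpha)}$, the coefficients of $A_t$ and their first derivatives depend continuously on $t\in T$, uniformly on $\overline U$; hence the bilinear form $(x,v)\mapsto\langle dd^c_{J_t}\phi(x),v\wedge J_t v\rangle$, as a function on $\overline U\times S$ where $S$ is the unit sphere bundle of $TX|_{\overline U}$ with respect to a fixed reference metric $g$, depends continuously on $t$ as well, uniformly on the compact set $\overline U\times S$.

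Next I would use the hypothesis that $\phi$ is strongly $J_{t_0}$-plurisubharmonic on $V$. By compactness of $\overline U\times S$, there is $c>0$ with $\langle dd^c_{J_{t_0}}\phi(x),v\wedge J_{t_0}v\rangle\ge c$ for all $(x,v)\in\overline U\times S$. Here one must take a small care that the quantity being bounded below is the Levi form evaluated on vectors of fixed $g$-norm one, and that the condition ``$\langle dd^c\phi,v\wedge Jv\rangle>0$ for all $0\ne v$'' from the paper's definition is equivalent, after rescaling $v$, to positivity of this function on the sphere bundle; this is where the equivalence of the two nearby fibre metrics matters, but since everything is evaluated through the fixed metric $g$ it is harmless. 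By the uniform continuity in $t$ established above, there is a neighbourhood $T_0\subset T$ of $t_0$ such that for all $t\in T_0$ and all $(x,v)\in\overline U\times S$ we have $\langle dd^c_{J_t}\phi(x),v\wedge J_t v\rangle\ge c/2>0$. Rescaling back to arbitrary $0\ne v\in T_xX$ gives strong $J_t$-plurisubharmonicity of $\phi$ on $U$ for every $t\in T_0$, as desired.

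The only genuine point requiring attention — and the place I would expect the referee to look — is the claim that the Levi form $dd^c_{J_t}\phi$, for a \emph{fixed} $\Cscr^2$ function $\phi$, depends continuously on $t$ in the $\Cscr^0(\overline U)$-topology when $\Jscr$ is only of class $\Cscr^{0,(1,\alpha)}$. This is precisely the content of the discussion immediately preceding the lemma: $d^c_{J_t}=\imath(\dibar_{J_t}-\di_{J_t})$ has coefficients depending algebraically (hence continuously in $t$) on the coefficients of $J_t$, and in $dd^c_{J_t}\phi$ those coefficients are differentiated once, so one needs the first $x$-derivatives of $J_t$ to be continuous in $t$ — which is guaranteed by the class $\Cscr^{0,(1,\alpha)}$. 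No derivatives of $\phi$ beyond order two, and no derivatives of $J_t$ beyond order one, enter. Everything else is a routine compactness argument on the sphere bundle over $\overline U$.
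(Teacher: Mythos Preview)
Your proof is correct and follows exactly the approach the paper indicates: the paper presents the lemma as an immediate consequence of the observation in the preceding paragraph that $dd^c_{J_t}$ depends continuously on $t$ when $\Jscr$ is of class $\Cscr^{0,1}$, and you have simply spelled out the standard compactness argument on the unit sphere bundle over $\overline U$ that makes this precise. There is nothing to add.
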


%
% PROOF 
%
\begin{proof}[Proof of Theorem \ref{th:tame}] 
For simplicity of notation we assume that $k=1$ and $l=0$; 
the proof is the same in the general case. 

We first prove that (b)\ $\Rightarrow$\ (a); this implication
holds for arbitrary topological space $T$.
Fix $t_0\in T$, a compact set $K\subset X$, 
and an open relatively compact set $U\Subset X$ 
containing $\wh K_{\! J_{t_0}}$. Choose a neighbourhood
$T_0\subset T$ of $t_0$ and a function $\rho:T_0\times X\to \R$
satisfying condition (b).
By adding a constant to $\rho_{t_0}$ we can ensure that 
$\rho_{t_0}<-1$ on $K$. Since $\rho_{t_0}$ is
an exhaustion function on $X$, there is a relatively 
compact domain $V\Subset X$ containing $\overline U$
such that $\rho_{t_0}>1$ on $X\setminus V$. Choose a strongly 
$J_{t_0}$-plurisubharmonic function $\psi:X\to\R$ such that 
$\psi<0$ on $\wh K_{\! J_{t_0}}$ and $\psi>0$ on $X\setminus U$
(see \cite[Theorem 5.1.6]{Hormander1990}).
Replacing $\psi$ by $c\psi$ for a suitable $c>0$ 
we may assume that $-1<\psi<0$ on $K$,
$\psi>0$ on $X\setminus U$, and $\psi< \rho_{t_0}$ on $bV$. 
Since $\rho_t$ is continuous in $t$, we can shrink $T_0$ around $t_0$
to ensure that for every $t\in T_0$ we have $\rho_t <-1$ on $K$ and 
$\psi< \rho_{t}$ on $bV$. By Lemma \ref{lem:spsh} we can 
further shrink $T_0$ to ensure
that $\psi$ is strongly $J_t$-plurisubharmonic on $V$
for every $t\in T_0$. For $t\in T_0$ we 
define the function $\phi_t:X\to \R$ by 
\[
	\phi_t(x) = 
	\begin{cases} 
		\max\{\psi(x),\rho_{t}(x)\}, &  x\in V; \\
		\rho_t(x), & x\in X\setminus V.
	\end{cases}
\]
Note that $\phi_t$ is a piecewise $\Cscr^2$ strongly 
$J_t$-plurisubharmonic exhaustion function on $X$ satisfying 
\begin{equation}\label{eq:inequalities}
	\text{$\phi_t=\psi<0$ on $K$ and $\phi_t>0$ on $X\setminus U$.} 
\end{equation}
%(To obtain $\Cscr^2$ strongly plurisubharmonic exhaustion functions satisfying \eqref{eq:inequalities} we can use the regularized maximum; see \cite[p.\ 69]{Forstneric2017E}. However, this is inessential.) 
Since the holomorphic hull of $K$ equals its plurisubharmonic 
hull (see \cite[Theorems 4.3.4 and 5.2.10]{Hormander1990}), 
it follows from \eqref{eq:inequalities} that $\wh K_{\!J_t}\subset U$ 
for all $t\in T_0$. This shows that $\Jscr$ is tame. 

Next, we prove that (a)\ $\Rightarrow$\ (b). Since the statement
in (b) is local in $t$, we may assume that $T$ is compact.
Tameness of $\Jscr$ and compactness of $T$
imply that for every compact set $K\subset X$, 
the hull $\wh K_{\!\!\Jscr}$ \eqref{eq:whK} 
is also compact. Hence, we can find an exhaustion 
$	K^0\subset K^1\subset K^2\subset \cdots \subset 
	\bigcup_{i=0}^\infty K^i=X
$
by compact sets such that 
$\wh {K^i}_{\!\!\!\Jscr} \subset T\times \mathring K^{i+1}$
holds for every $i=0,1,2,\ldots$. 
Choose an increasing sequence $0<c_1<c_2<\cdots$ with 
$\lim_{i\to\infty}c_i=+\infty$. We proceed inductively. 

In the initial step, fix a neighbourhood $U^1\Subset X$ of $K^1$
and choose an open subset $\wt U^1\Subset X$
such that $\overline {U^1}\subset \wt U^1$. We shall find a 
function $\rho^1:T\times X\to \R_+$ of class $\Cscr^{0,2}$ 
such that $\rho^1_t$ is strongly plurisubharmonic on $U^1$
for all $t\in T$, $\rho^1$ 
has compact support contained in $T\times \wt U_1$, 
and $\rho^1>c_1$ on $T\times K^1$.
To do this, fix $t\in T$ and pick a strongly $J_t$-plurisubharmonic
function $\phi_t:X\to \R_+$ such that $\phi_t > c_1$ on $K^1$.
Lemma \ref{lem:spsh} gives 
a neighbourhood $T_t\subset T$ of $t$ such that
$\phi_t$ is strongly $J_s$-plurisubharmonic on $\wt U^1$ 
for every $s\in T_t$. By compactness of $T$ we obtain a finite
covering $T=\bigcup_{j=1}^m T_j$ and for each $j=1,\ldots,m$
a $\Cscr^2$ function $\phi_j:\wt U^1\to \R_+$ 
such that $\phi_j>c_1$ on $K^1$ and $\phi_j$ is strongly 
$J_t$-plurisubharmonic for every $t\in T_j$. Let $\{\chi_j\}_{j=1}^m$
be a continuous partition of unity on $T$ with $\supp \chi_j\subset T_j$.
Also, let $\xi:X\to [0,1]$ be a smooth function with compact support
contained in $\wt U^1$ which equals $1$ on $U^1$.
The function $\rho^1(t,x)=\xi(x) \sum_{j=1}^m \chi_j(t) \phi_j(x)$
is then fibrewise strongly plurisubharmonic on $T\times U_1$ and has
compact support contained in $T\times \wt U^1$.

In the second step, we pick a neighbourhood $U^2\Subset X$ of 
$K^2$ and find a function $\rho^2:T\times X \to\R_+$ of class
$\Cscr^{0,2}$ with compact support such that $\rho^2=0$ on 
$\wh {K^0}_{\!\!\!\Jscr}$, $\rho^1+\rho^2$ is fibrewise 
strongly plurisubharmonic on $T\times U^2$,
and $\rho^1+\rho^2>c_2$ on $T\times (K^2\setminus K^1)$.
(We also have $\rho^1+\rho^2>c_1$ on $T\times K^1$.)
To do this, fix $t\in T$ and apply \cite[Theorem 5.1.6]{Hormander1990}
to find a smooth $J_t$-plurisubharmonic 
function $\phi_t:X\to \R_+$ which vanishes on a neighbourhood
of $\wh {K^0}_{\! J_t}$, it is positive strongly $J_t$-plurisubharmonic 
on $X \setminus K^1$ (recall that $\wh {K^0}_{\! J_t}$ is contained
in the interior of $K^1$), and $\rho^1_t+\phi_t$
is strongly $J_t$-plurisubharmonic on $X$ and 
satisfies $\rho^1_t+\phi_t>c_2$ on $K^2\setminus K^1$.
By tameness of $\Jscr$ and Lemma \ref{lem:spsh}
there is a neighbourhood $T_t\subset T$ of $t$ such that 
the function $\rho^1_s+\phi_t:U^2\to\R_+$ 
satisfies the same conditions for all $s\in T_t$,
and $\phi_t$ vanishes on a neighbourhood
of $\wh {K^0}_{\! J_s}$ for all $s\in T_t$. 
As in the first step, this gives a finite open covering 
$T=\bigcup_{j=1}^m T_j$, functions $\phi_j:X\to\R_+$
$(j=1,\ldots,m)$, a partition of unity $\{\chi_j\}_{j=1}^m$ on $T$ 
with $\supp \chi_j\subset T_j$,
and a smooth cut-off function $\xi:X\to[0,1]$ such that the function 
$\rho^2(t,x)=\xi(x) \sum_{j=1}^m \chi_j(t) \phi_j(x)$ enjoys 
the stated properties.

This process can be continued inductively to yield a sequence 
of nonnegative functions $\rho^1,\rho^2,\ldots$ 
of class $\Cscr^{0,2}(T\times X)$ with compact supports 
such that their partial sums $\tilde \rho^i=\rho^1+\cdots+\rho^i$
are of class $\Cscr^{0,2}$ and satisfy the following conditions
for every $i=1,2,\ldots$:
\begin{enumerate}[\rm (i)]
\item $\tilde \rho^i$ is fibrewise 
strongly plurisubharmonic on a neighbourhood
of $T\times K^i$ and has compact support.
\item $\tilde \rho^i>c_1$ on $T\times K^1$ and 
$\tilde \rho^i>c_j$ on $T\times (K^j\setminus K^{j-1})$
for $j=2,\ldots,i$.
\item $\tilde \rho^{i+1}=\tilde\rho^i$ on $\wh {K^i}_{\!\!\!\Jscr}$. 
\end{enumerate}
Condition (iii) implies that the sequence is stationary on 
any compact subset of $T\times X$. It follows that 
$\rho=\sum_{i=1}^\infty \rho^i : T\times X\to \R_+$ is a 
fibrewise strongly plurisubharmonic function of class 
$\Cscr^{0,2}$ satisfying
$\rho>c_i$ on $T\times (K^i\setminus K^{i-1})$ for every 
$i=1,2,\ldots$. In particular, $\rho_t=\rho(t,\cdotp)$ 
is an exhaustion function on $X$ for every $t\in T$. 
It is easy to ensure that the Levi form of $\rho_t$ 
with respect to $J_t$ grows as fast as desired uniformly in $t\in T$. 
This proves the implication (a) $\Rightarrow$ (b).

Assume now that $T$ is also paracompact.
If (b) holds, we obtain a locally finite open cover $\Vcal=\{V_i\}_i$ of $T$
and for every $i$ a fibrewise strongly plurisubharmonic
exhaustion function $\rho_i:T_i\times X\to\R$.  
Pick a partition of unity $\{\chi_i\}_i$ on $T$ 
subordinate to $\Vcal$. Then, the function 
$\rho=\sum_i \chi_i\rho_i :T\times X\to\R$ 
satisfies condition (c). The implication (c) $\Rightarrow$ (b)
is a tautology. 

If $T$ is a $\Cscr^{l}$ manifold, the same proof gives a 
function $\rho$ of class $\Cscr^{l,k+1}(T\times X)$.
\end{proof}

The proof of Theorem \ref{th:tame} gives the following analogue
of the classical result \cite[Theorem 5.1.6]{Hormander1990}
for a tame family of Stein structures. We leave the details 
to the reader.

\begin{theorem}\label{th:tame2}
Assume that $X$, $T$ and $\Jscr$ are as in Theorem \ref{th:tame}.
Given a proper $\Jscr$-convex subset 
$K=\wh K_{\!\!\Jscr}\subset T\times X$
and an open set $U\subset T\times X$ containing $K$, there
is a function $\rho:T\times X\to\R$ as in Theorem \ref{th:tame} (b)
such that $\rho<0$ on $K$ and $\rho>0$ on $(T\times X) \setminus U$.
Conversely, if $\rho$ is a function as in Theorem \ref{th:tame} (b)
then for every $c\in \R$ the sublevel set $\{\rho\le c\}\subset T\times X$
is proper and $\Jscr$-convex.  
\end{theorem}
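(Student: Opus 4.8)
The plan is to deduce Theorem~\ref{th:tame2} by running the inductive construction from the proof of Theorem~\ref{th:tame} with the two additional normalisations $\rho<0$ on $K$ and $\rho>0$ off $U$ built in from the start, exactly as one does in the classical proof of \cite[Theorem 5.1.6]{Hormander1990} for a single Stein manifold. Since $K=\wh K_{\!\!\Jscr}$ is proper over $T$, the fibres $K_t$ are compact and upper semicontinuous in $t$. Working locally in $t$ (a neighbourhood of any $t_0$ may be taken relatively compact, so $\bigcup_{t\in T_0}K_t$ is relatively compact in $X$), I would first apply the classical result in the fibre $X$ over $t_0$, together with Lemma~\ref{lem:spsh}, to produce a $\Cscr^{0,k+1}$ function $\psi$ on $T_0\times X$ that is fibrewise strongly plurisubharmonic on a neighbourhood of $K$, negative on $K$, and positive on a neighbourhood of $(\overline{U_0}\setminus U)$ where $U_0$ is a suitable relatively compact piece of $U$; this is the ``local model'' near $K$.

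Next I would exhaust: choose compact sets $K^0\subset K^1\subset\cdots$ with $\bigcup_i K^i=X$ and, as in the proof of Theorem~\ref{th:tame}, arrange $\wh{K^i}_{\!\!\!\Jscr}\subset T_0\times \mathring K^{i+1}$ (possible by Lemma~\ref{lem:tame}, since $K$ itself being $\Jscr$-convex and proper lets us absorb it into $K^0$ and keep the later hulls nested). Then I would carry out the same inductive patching as in the proof of Theorem~\ref{th:tame}: in step $i$ one adds a nonnegative $\Cscr^{0,k+1}$ function $\rho^i$, supported near $T_0\times \wt U^i$, that vanishes on $\wh{K^{i-1}}_{\!\!\!\Jscr}$, keeps the partial sum fibrewise strongly plurisubharmonic on a neighbourhood of $T_0\times K^i$, and forces rapid growth off $K^{i-1}$; the key point is that the added terms vanish near $K$, so the normalisation from the local model is preserved, and the sequence is eventually stationary on compacta by the vanishing condition (iii). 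The limit $\rho=\psi+\sum_i\rho^i$ is then a $\Cscr^{0,k+1}$ function that is a fibrewise strongly $J_t$-plurisubharmonic exhaustion, negative on $K$, and positive off $U$ (growth off $K^0$ handles points of $(T\times X)\setminus U$ outside the local chart, while $\psi$ handles those inside). Paracompactness of $T$ (or the $\Cscr^l$ structure) is then handled by the same partition-of-unity argument as at the end of the proof of Theorem~\ref{th:tame}, patching the local functions $\rho$; one must check that a convex combination of functions each negative on $K$ and positive off $U$ retains those two properties, which is immediate.

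For the converse, suppose $\rho$ is a function as in Theorem~\ref{th:tame}~(b) and fix $c\in\R$. Set $L=\{\rho\le c\}$. Each fibre $L_t=\{x\in X:\rho_t(x)\le c\}$ is compact because $\rho_t$ is an exhaustion, and upper semicontinuity of $t\mapsto L_t$ follows from continuity of $\rho$ together with the properness of each sublevel set: if $(t_j,x_j)\to(t_0,x_0)$ with $\rho_{t_j}(x_j)\le c$, then $\rho_{t_0}(x_0)\le c$, and one confines the $x_j$ to a fixed compact set using that $\rho_{t_0}$ is an exhaustion and $\rho$ is continuous (this is the same net argument as in the proof of Proposition~\ref{prop:usc}); hence $L$ is proper. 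Fibrewise $\Jscr$-convexity $L=\wh L_{\!\!\Jscr}$ is the standard fact that a sublevel set of a strongly plurisubharmonic exhaustion function is holomorphically convex, applied in each $(X,J_t)$: by \cite[Theorems 4.3.4 and 5.2.10]{Hormander1990} the holomorphic hull equals the $J_t$-plurisubharmonic hull, and $\rho_t$ being a strongly $J_t$-plurisubharmonic exhaustion shows $(\wh{L_t})_{J_t}\subset\{\rho_t\le c\}=L_t$.

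The main obstacle is the global patching in the first direction: one must verify that the rapid-growth terms $\rho^i$ introduced to make $\rho$ an exhaustion can be chosen to vanish identically on (a neighbourhood of) $K$, so that the sign conditions coming from the local model $\psi$ are never disturbed, while still making the partial sums fibrewise strongly plurisubharmonic globally and uniformly in $t$ on compacta. This is where tameness of $\Jscr$ enters decisively, through Lemma~\ref{lem:tame} (so that $\wh{K^{i-1}}_{\!\!\!\Jscr}$ stays inside $\mathring K^i$ after shrinking $T_0$) and through Lemma~\ref{lem:spsh} to pass from strong plurisubharmonicity at one parameter to a neighbourhood of parameters; everything else is a direct transcription of the classical argument and of the proof of Theorem~\ref{th:tame}. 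I would say explicitly that the details are left to the reader only for the routine bookkeeping, but I would spell out the two sign-preservation checks and the converse, since those are the content specific to Theorem~\ref{th:tame2}.
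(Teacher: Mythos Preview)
Your proposal is correct and follows precisely the route the paper indicates: the paper gives no detailed proof of Theorem~\ref{th:tame2}, stating only that it is obtained from the proof of Theorem~\ref{th:tame} as the analogue of \cite[Theorem 5.1.6]{Hormander1990}, with details left to the reader. Your plan---starting from a fibrewise local model carrying the sign conditions and then running the inductive patching of Theorem~\ref{th:tame} with nonnegative correction terms vanishing near $K$, together with the standard converse argument via the equality of holomorphic and plurisubharmonic hulls---is exactly this adaptation.
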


%
% Examples of tame families.
%
We give a few examples and constructions of 
tame families of Stein structures. 
The first observation is that nontameness can only appear
due to the behaviour of the structures near infinity.

\begin{proposition}\label{prop:locallyconstant}
Let $X$ and $\Jscr=\{J_t\}_{t\in T}$ be as in Theorem \ref{th:tame},
where $T$ is an arbitrary topological space.
If for every $t_0\in T$ there are a compact set $K\subset X$ and
a neighbourhood $T_0\subset T$ of $t_0$ such that $J_t=J_{t_0}$
holds on $X\setminus K$ for all $t\in T_0$, 
then the family $\Jscr$ is tame.
\end{proposition}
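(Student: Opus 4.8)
The plan is to reduce the tameness of $\Jscr$ at a point $t_0$ to a purely local computation of hulls near the fixed compact set $K\subset X$ along which the structures are allowed to vary. The key observation is that if $J_t$ agrees with $J_{t_0}$ outside a compact set $K$, then the $J_{t_0}$-holomorphic functions on $X$ restrict to functions that are $J_t$-holomorphic on $X\setminus K$, and conversely; in particular there is a single Stein exhaustion that controls the geometry at infinity simultaneously for all $t$ near $t_0$. First I would fix $t_0\in T$ together with the compact set $K$ and neighbourhood $T_0$ from the hypothesis. Since $(X,J_{t_0})$ is Stein, choose a smooth strongly $J_{t_0}$-plurisubharmonic exhaustion $\rho:X\to\R_+$; by Lemma \ref{lem:spsh} (applied on a relatively compact neighbourhood of $K$, after possibly shrinking $T_0$) we may assume that $\rho$ is also strongly $J_t$-plurisubharmonic on a fixed neighbourhood of $K$ for every $t\in T_0$. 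Because $J_t=J_{t_0}$ off $K$, the function $\rho$ is then strongly $J_t$-plurisubharmonic on all of $X$ for every $t\in T_0$, so in particular $\rho_t:=\rho$ gives a continuous (indeed $t$-independent, hence $\Cscr^{0,k+1}$) family of strongly $J_t$-plurisubharmonic exhaustion functions on a neighbourhood $T_0$ of $t_0$.

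Having produced such a family $\rho_t$, the conclusion follows immediately from the implication (b) $\Rightarrow$ (a) of Theorem \ref{th:tame}, which holds for an arbitrary topological space $T$: a continuous family of strongly $J_t$-plurisubharmonic exhaustion functions on a neighbourhood of $t_0$ forces $\Jscr$ to be tame at $t_0$. Since $t_0\in T$ was arbitrary, $\Jscr$ is tame. Alternatively, one can argue directly without invoking Theorem \ref{th:tame}: given a compact $K'\subset X$ and an open $U\supset \wh{K'}_{J_{t_0}}$, enlarge $K$ so that $K'\subset K$ as well, pick the common exhaustion $\rho$ as above normalised so that $\{\rho<0\}\Subset U$ contains $\wh{K'}_{J_{t_0}}$, and then a standard cut-off/gluing of $\rho$ with a strongly $J_{t_0}$-plurisubharmonic function separating $\wh{K'}_{J_{t_0}}$ from $X\setminus U$ (as in the proof of (b)$\Rightarrow$(a)) shows $\wh{K'}_{J_t}\subset U$ for all $t\in T_0$; here one uses that the plurisubharmonic hull equals the holomorphic hull via \cite[Theorems 4.3.4 and 5.2.10]{Hormander1990}.

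The only subtlety — and the step I expect to require the most care — is the appeal to Lemma \ref{lem:spsh} to upgrade strong $J_{t_0}$-plurisubharmonicity of the fixed exhaustion $\rho$ to strong $J_t$-plurisubharmonicity for nearby $t$, since Lemma \ref{lem:spsh} is stated for a relatively compact domain $U\Subset V$, whereas we need positivity of the Levi form on all of $X$. This is harmless precisely because $J_t$ and $J_{t_0}$ differ only on the compact set $K$: pick a relatively compact open $V\supset K$ and apply Lemma \ref{lem:spsh} with $U$ chosen so that $K\Subset U\Subset V$; on $X\setminus U$ the structure $J_t$ coincides with $J_{t_0}$, so $dd^c_{J_t}\rho=dd^c_{J_{t_0}}\rho>0$ there automatically, and on $U$ it is positive for $t$ in the shrunk neighbourhood $T_0$. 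Everything else is formal. Note also that the regularity hypothesis of Theorem \ref{th:tame} (local H\"older class $\Cscr^{0,(k,\alpha)}$) is only used through Lemma \ref{lem:spsh}, which requires class $\Cscr^{0,(1,\alpha)}$; this is consistent with the blanket assumptions inherited from Theorem \ref{th:tame}.
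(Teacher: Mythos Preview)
Your proposal is correct and follows essentially the same approach as the paper: fix a strongly $J_{t_0}$-plurisubharmonic exhaustion $\rho$, use Lemma \ref{lem:spsh} on a neighbourhood of the compact set $K$ (shrinking $T_0$) together with $J_t=J_{t_0}$ on $X\setminus K$ to conclude that $\rho$ is strongly $J_t$-plurisubharmonic on all of $X$, and then invoke the implication (b)\,$\Rightarrow$\,(a) of Theorem \ref{th:tame}. Your discussion of the ``only subtlety'' is in fact slightly more careful than the paper's own treatment of the application of Lemma \ref{lem:spsh}.
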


\begin{proof}
Let $t_0\in T_0\subset T$ and $K\subset X$ be as in the assumptions.
Pick a strongly $J_{t_0}$-plurisubharmonic exhaustion function
$\rho:X\to \R_+$. Lemma \ref{lem:spsh} shows that, 
up to shrinking $T_0$ around $t_0$, $\rho$ is strongly 
$J_{t}$-plurisubharmonic on $K$ for every $t\in T_0$.
The same is true on $X\setminus K$ since $J_t=J_{t_0}$ there. 
Hence, the implication (b)\ $\Rightarrow$ (a)
in Theorem \ref{th:tame} shows that the family $\{J_t\}_{t\in T_0}$ is tame. Since tameness is a local condition in the parameter $t$, $\Jscr$ is tame.
\end{proof}

%
%  Families of psc Runge domains are tame
%
\begin{proposition}\label{prop:Runge}
If $(X,J)$ is a Stein manifold and $\Phi_t:X\to \Phi_t(X)\subset X$ is 
a continuous family of diffeomorphisms onto Stein Runge domains in $X$,
then the family of Stein structures $J_t=\Phi_t^*J$ on $X$ is tame.
\end{proposition}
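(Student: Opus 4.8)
The plan is to verify Definition~\ref{def:tame} directly, reducing everything to a fixed hull computation in $(X,J)$ transported through the biholomorphisms $\Phi_t$. Since tameness is a local condition in $t$, fix $t_0\in T$ and write $\Omega_t=\Phi_t(X)\subseteq X$. As $\Phi_t\colon(X,J_t)\to(\Omega_t,J)$ is biholomorphic, it induces an isomorphism $\Oscr_{J_t}(X)\cong\Oscr_J(\Omega_t)$, so for every compact $K\subseteq X$ the $J_t$-convex hull $\wh K_{\!J_t}$ equals $\Phi_t^{-1}$ of the $J$-convex hull of $\Phi_t(K)$ computed inside $\Omega_t$. Because $\Omega_t$ is a Runge domain in the Stein manifold $(X,J)$, it is $\Oscr_J(X)$-convex, so that last hull coincides with the $J$-convex hull $\wh{\Phi_t(K)}_J$ computed in $X$, which is a compact subset of $\Omega_t$ (for this characterization of Runge pairs of Stein manifolds see, e.g., \cite{Hormander1990} or \cite{Forstneric2017E}). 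Thus
\[
	\wh K_{\!J_t}=\Phi_t^{-1}\bigl(\wh{\Phi_t(K)}_J\bigr),
\]
and the behaviour of $\wh K_{\!J_t}$ is controlled entirely by the (fixed) hull operation in $(X,J)$ together with the variation of $\Phi_t$ and $\Phi_t^{-1}$.

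The next step is to record the uniform upper semicontinuity of hulls in the fixed Stein manifold $(X,J)$: for every compact $L\subseteq X$ and every open $W\subseteq X$ with $\wh L_J\subseteq W$ there is an open set $V\supseteq L$ such that $\wh{L'}_J\subseteq W$ for every compact $L'\subseteq V$. This follows just as in the implication (b)$\Rightarrow$(a) of Theorem~\ref{th:tame}: by \cite[Theorem~5.1.6]{Hormander1990} choose a strongly $J$-plurisubharmonic exhaustion $\psi\colon X\to\R$ with $\psi<0$ on $L$ and $\psi>0$ on $X\setminus W$, put $V=\{\psi<0\}\subseteq W$, and note that any compact $L'\subseteq V$ satisfies, using the coincidence of holomorphic and plurisubharmonic hulls on a Stein manifold \cite[Theorems 4.3.4 and 5.2.10]{Hormander1990}, $\wh{L'}_J\subseteq\{\psi\le\max_{L'}\psi\}\subseteq\{\psi<0\}\subseteq W$.

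Now fix a compact $K\subseteq X$ and an open $U\supseteq\wh K_{\!J_{t_0}}$; the hull $\wh K_{\!J_{t_0}}$ is compact because $(X,J_{t_0})$ is Stein, so we may pick an open $U_1$ with $\wh K_{\!J_{t_0}}\subseteq U_1\Subset U$. Applying the previous step with $L=\Phi_{t_0}(K)$ and $W=\Phi_{t_0}(U_1)$ — which is open in $X$ and contains $\wh{\Phi_{t_0}(K)}_J=\Phi_{t_0}\bigl(\wh K_{\!J_{t_0}}\bigr)$ by the displayed identity at $t_0$ — yields an open $V\supseteq\Phi_{t_0}(K)$ with $\wh{L'}_J\subseteq\Phi_{t_0}(U_1)$ for every compact $L'\subseteq V$. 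Since $\Phi_t\to\Phi_{t_0}$ uniformly on the compact set $K$, for $t$ in a neighbourhood of $t_0$ we have $\Phi_t(K)\subseteq V$, hence $\wh{\Phi_t(K)}_J\subseteq\Phi_{t_0}(U_1)$ and therefore $\wh K_{\!J_t}=\Phi_t^{-1}\bigl(\wh{\Phi_t(K)}_J\bigr)\subseteq\Phi_t^{-1}\bigl(\Phi_{t_0}(U_1)\bigr)$. Finally, $\Phi_{t_0}(\overline{U_1})$ is a compact subset of $\Omega_{t_0}$, so by continuity of the family $\{\Phi_t\}$ (which gives $\Phi_t\to\Phi_{t_0}$ in $\Cscr^1_{\mathrm{loc}}$) the domains $\Omega_t$ eventually contain a fixed neighbourhood of $\Phi_{t_0}(\overline{U_1})$ and the inverses $\Phi_t^{-1}$ converge there uniformly to $\Phi_{t_0}^{-1}$; consequently $\Phi_t^{-1}\bigl(\Phi_{t_0}(U_1)\bigr)$ lies in an arbitrarily small neighbourhood of $\Phi_{t_0}^{-1}\bigl(\Phi_{t_0}(U_1)\bigr)=U_1$, hence inside $U$ once $t$ is close enough to $t_0$ (recall $U_1\Subset U$). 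This gives $\wh K_{\!J_t}\subseteq U$ for all $t$ near $t_0$, so $\Jscr$ is tame at $t_0$, and therefore tame.

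I expect the last part to be the main obstacle. In contrast to Proposition~\ref{prop:locallyconstant}, the structures $J_t=\Phi_t^*J$ need not agree off a compact set, and the Runge domains $\Omega_t$ may change drastically near infinity, so a priori a point of $X\setminus U$ could be carried by $\Phi_t$ into the central region $\Phi_{t_0}(U_1)$; excluding this needs genuine local uniform control of the inverse maps $\Phi_t^{-1}$ over a fixed compact subset of $\Omega_{t_0}$, rather than merely pointwise convergence. An alternative route would be to produce, via Theorem~\ref{th:tame}, a continuous family of strongly $J_t$-plurisubharmonic exhaustions of $X$ by pulling back through $\Phi_t$ exhaustions of the $\Omega_t$ built from one fixed exhaustion of $(X,J)$; but choosing those exhaustions continuously as $\Omega_t$ moves runs into the same difficulty.
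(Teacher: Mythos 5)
Your argument is correct and follows essentially the same route as the paper's (which is considerably terser): reduce everything to the fixed hull operation in $(X,J)$ via the Runge property of $\Omega_t=\Phi_t(X)$, invoke upper semicontinuity of $\Oscr_J(X)$-hulls of continuously varying compacts (your exhaustion-function lemma), and transport the conclusion back through $\Phi_t^{-1}$. The obstacle you flag at the end is not a genuine one: since $\Phi_t\to\Phi_{t_0}$ uniformly on compact subsets of $X$ and each $\Phi_t$ is injective, invariance of domain (equivalently, a degree argument on a compact neighbourhood $B$ of $\Phi_{t_0}^{-1}(C)$) shows that for $t$ near $t_0$ any given compact $C\subset\Omega_{t_0}$ is contained in $\Phi_t(\mathring B)\subset\Omega_t$ with $\Phi_t^{-1}(C)\subset B$, and a routine compactness argument then yields $\Phi_t^{-1}\to\Phi_{t_0}^{-1}$ uniformly on $C$ — so only $\Cscr^0_{\mathrm{loc}}$ convergence of the $\Phi_t$ is needed, not the $\Cscr^1_{\mathrm{loc}}$ convergence you mention (which the hypothesis of a merely continuous family does not provide).
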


\begin{proof}
Let $K\subset X$ be a compact set. Set $\Omega_t=\Phi_t(X)$
and $K_t=\Phi_t(K)$. Denote by $J^t_0$
the restriction of $J$ to $T\Omega_t$.
Since $\Phi_t:(X,J_t)\to (\Omega_t,J^t_0)$ is a biholomorphism,
we have $\wh K_{J_t}=\Phi_t^{-1}((\wh{K_t})_{J^t_0})$. 
Since $\Omega_t$ is Runge in $X$, 
$(\wh{K_t})_{J^t_0}$ equals $(\wh{K_t})_{J}$,
the hull of $K_t$ in $(X,J)$.
Since the family $K_t$ is continuous in $t$, 
the family of hulls $(\wh{K_t})_{J}$ is upper semicontinuous 
in $t$, so the same is true for $\wh K_{J_t}$.
\end{proof}

Theorem \ref{th:nontame} shows that Proposition \ref{prop:Runge}
fails in general if the domains $\Phi_t(X)$ are not Runge in $X$.
In that example, $X=\C^n$ with $n>1$, $t\in \R$, $\Phi_t(\C^n)=\C^n$
for $t\ne 0$, while $\Phi_0(\C^n)$ is not Runge in $\C^n$.
It is easy to find an example of a tame family of Stein structures 
$\{J_t\}_{t\in \R}$ on $\R^{2n}$ such that $(\R^{2n},J_0)$ 
equals $\C^n$ while $(\R^{2n},J_t)$ for $t\ne 0$ is biholomorphic 
to the unit ball in $\C^n$.

\begin{example}
Assume that $(Y,J_Y)$ is a Stein manifold, $X$ is a smooth manifold,
and $F:T\times X\to Y$ is a map of class $\Cscr^{0,\infty}$ such that for 
every $t\in T$, the map $F_t=F(t,\cdotp):X \to Y$ is a proper 
immersion whose image $F_t(X)$ is an immersed complex
submanifold of $Y$. Let $J_t$ denote the unique complex 
structure on $X$ such that the map $F_t$ is $(J_t,J_Y)$-holomorphic.
Since $F_t$ is proper, $J_t$ is Stein. 
Choosing a strongly plurisubharmonic exhaustion function 
$\rho:Y\to\R_+$, the function $\rho\circ F:T\times X\to\R_+$ satisfies
condition (c) in Theorem \ref{th:tame}, so $\Jscr=\{J_t\}_{t\in T}$ is tame.
\end{example}

%
%
%  THE OKA PRINCIPLE ON TAME FAMILIES OF STEIN MANIFOLDS
%
%
\section{The Oka principle for tame families of Stein structures}
\label{sec:Oka}

In this section, we state and prove the main result of the paper,
Theorem \ref{th:Oka}. It gives a parametric Oka principle with approximation
for maps from tame families of Stein structures to any Oka manifold.
Except for the regularity assumptions and statements, 
this result extends the special case concerning families of open Riemann
surfaces in \cite[Theorem 1.6]{Forstneric2024Runge}. 

Let $T$ be a locally compact Hausdorff space, 
$X$ be a smooth open manifold, and let $\pi:T\times X\to T$ 
denote the projection. Assume that $\mathscr{J}=\{J_t\}_{t\in T}$
is a tame family of integrable Stein structures on $X$
(see Definition \ref{def:tame}). 
Recall that a closed subset $K\subset T\times X$ is 
called {\em proper over $T$} (or simply {\em proper}) 
if the restricted projection $\pi|_K:K\to T$ is proper,
and is called $\mathscr{J}$-convex if $K= \wh K_{\!\!\mathscr{J}}$
(see \eqref{eq:whK2}). 
A continuous map $f$ on an open $U\subset T\times X$ 
is said to be {\em $\mathscr{J}$-holomorphic} if the map 
$f_t = f(t, \cdot)$ is $J_t$-holomorphic on $U_t = \{x \in X : (t,x) \in U \}$ 
for every $t \in T$.  
A topological space is said to be {\em $\sigma$-compact} if it is 
the union of countably many compact subspaces.
Every locally compact and $\sigma$-compact Hausdorff
space is paracompact \cite{Michael1957PAMS}. 
A topological space $T$ is a {\em Euclidean neighbourhood retract} (ENR) 
if it admits a topological embedding $\iota: T \hra \R^N$ for some $N$ 
whose image $\iota(T)\subset \R^N$ is a neighbourhood retract,
and is a {\em local ENR} if every point of $T$ 
has an ENR neighbourhood. (See 
\cite[Definition 1.5]{Forstneric2024Runge} and the references therein.)
In particular, every finite CW complex is an ENR,
%(see \cite[Corollary A.10]{Hatcher2002}),
and every countable locally compact CW-complex of finite dimension
is an ENR.

%
%  THE MAIN THEOREM
%
\begin{theorem}[The Oka principle for tame families of Stein structures] \label{th:Oka}
Assume the following:
\begin{enumerate}[\rm (a)]
\item $T$ is a $\sigma$-compact Hausdorff local ENR. 
In particular, $T$ may be a finite CW complex or a 
countable locally compact CW-complex of finite dimension.
\item $X$ is a smooth open manifold of real dimension $2n$. 
\item $r\ge 2n+11$ is an integer, or $r=+\infty$.  
\item $\Jscr=\{J_t\}_{t\in T}$ is a tame family of Stein structures 
of class $\Cscr^{0,r}$ on $X$ (see Definition \ref{def:tame}).
\item $K\subset T\times X$ is a proper (over $T$) $\mathscr{J}$-convex subset.
\item $Y$ is an Oka manifold with a distance function $\dist_Y$ 
inducing the manifold topology.
\item $f:T\times X \to Y$ is a continuous map, and there are an open subset 
$U\subset T\times X$ containing $K$ and a closed subset $Q\subset T$ such that $f$ is $\mathscr{J}$-holomorphic on $U\cup (Q\times X)$.
\end{enumerate}
Given a continuous function $\epsilon:T\to (0,+\infty)$,  
there exist a neighbourhood $U'\subset U$ of $K$ and a homotopy 
$f_s:T\times X\to Y$ $(s\in I=[0,1])$ satisfying the following conditions.
\begin{enumerate}[\rm (i)]
\item $f_0=f$.
\item The map $f_{s}$ is $\mathscr{J}$-holomorphic on $U'$
for every $s\in I$.
\item $\sup_{x\in K_t}\dist_Y(f_s(t,x),f(t,x))<\epsilon(t)$
for every $t\in T$ and $s\in I$.
\item The map $F=f_1$ is $\mathscr{J}$-holomorphic on $T\times X$. 
\item
The homotopy $f_s(t,\cdotp)$ $(s\in I)$ is fixed for every $t\in Q$, 
so $F=f$ on $Q\times X$.
\end{enumerate}
\end{theorem}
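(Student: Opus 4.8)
The plan is to reduce the theorem to the classical parametric Oka principle for a single Stein manifold, by locally ``straightening'' the family $\Jscr$ via the parametric Hamilton theorem (Theorem \ref{th:Hamilton}), and then patching the local solutions together using a standard induction over a locally finite cover of $T$. The key point is that, after fixing $t_0\in T$ and a large strongly $J_{t_0}$-pseudoconvex sublevel set $D\Subset X$ of the exhaustion $\rho_{t_0}$ furnished by tameness (Theorem \ref{th:tame}), Theorem \ref{th:Hamilton} gives a neighbourhood $T_0\ni t_0$ and a continuous family of biholomorphisms $\varphi_t:(D,J_t)\to(D_t,J_{t_0})$, $\varphi_{t_0}=\Id$. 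Composing the given family $f$ with these $\varphi_t^{-1}$ turns the problem on $T_0\times D$ into a problem about a continuous family of $J_{t_0}$-\emph{holomorphic} maps from a varying-but-fixed-complex-structure family of Stein domains $D_t$ into $Y$, i.e. exactly the setting of the parametric Oka property with approximation on the fixed Stein manifold $(X,J_{t_0})$ (see \cite[Theorem 5.4.4]{Forstneric2017E} and its parametric versions).

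\textbf{Main steps.} First I would use tameness (via Theorem \ref{th:tame}(c), after passing to a paracompact cover, so $\sigma$-compactness is used here) to produce a $\Jscr$-convex exhaustion $\emptyset=L_0\subset L_1\subset L_2\subset\cdots$ of $T\times X$ by proper $\Jscr$-convex compacts $L_j=\{\rho\le c_j\}$ with $L_j\subset \mathring L_{j+1}$, arranged so that the given compact $K$ lies in $L_1$ and $U$ contains a neighbourhood of $L_1$. Using also a local ENR structure on $T$ and a handlebody/bump decomposition of the $L_j$'s, the homotopy is built by an infinite induction: at each stage we must either (a) extend a $\Jscr$-holomorphic map from a neighbourhood of $L_j\cup(Q\times X)$ to a neighbourhood of $L_{j+1}$ when $L_{j+1}$ is a ``no-critical-point'' enlargement of $L_j$, or (b) cross a single critical point of the exhaustion $\rho_t$, which amounts to attaching a handle. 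For step (a), one covers the relevant piece of $T$ by finitely many charts $T_0$ on each of which Hamilton's theorem straightens $\Jscr$ to a constant structure, applies the parametric Oka property with approximation on the straightened model, and glues the finitely many local corrections by a partition of unity in $t$ together with a standard gluing lemma for $\Jscr$-holomorphic maps close on overlaps (a Cartan-type splitting, which here reduces to solving a $\dibar_{J_t}$ problem on a Stein piece — available from Theorem \ref{th:dibar} with parameters). Step (b) is handled exactly as in the non-parametric theory: a handle attachment reduces the convex-hull enlargement to the previous case after a $\Cscr^{0,r}$-small, $\Jscr$-holomorphic perturbation localised near the handle, using that $K$ is $\Jscr$-convex so the critical sublevel sets stay $\Jscr$-convex.

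\textbf{Expected obstacles.} The main technical obstacle is the \emph{gluing/patching over the parameter space}: after straightening $\Jscr$ on overlapping charts $T_i\cap T_j$, the two straightenings differ by a $t$-dependent biholomorphism, so the locally-produced $\Jscr$-holomorphic corrections must be merged into a single globally continuous (and, on $Q$, unchanged) family. This requires a parametric version of the Oka--Cartan gluing lemma in which the splitting operators depend continuously on $t$ and respect the closed set $Q\subset T$ where $f$ is already holomorphic; the needed continuity and uniform estimates come from Theorem \ref{th:dibar}(c)–(d) applied to the straightened families, but bookkeeping the loss of derivatives (hence the hypothesis $r\ge 2n+11$, two more than in Theorem \ref{th:Hamilton} to accommodate the gluing) is delicate. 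A secondary point is the passage from ``fine topology'' approximation on the proper set $K$ to the continuous-function control $\epsilon:T\to(0,\infty)$ in (iii); this is routine once the induction is set up so that successive corrections are summable, but it must be threaded through the whole construction. The homotopy $f_s$ is then obtained by concatenating, at geometrically decreasing scales in $s\in I$, the homotopies produced at each stage of the induction, with the final map $F=f_1$ $\Jscr$-holomorphic on all of $T\times X$ and equal to $f$ on $Q\times X$.
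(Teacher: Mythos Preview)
The overall architecture is right—straighten via Hamilton's theorem, apply the parametric Oka principle for a fixed Stein structure, and patch over $T$—but your gluing step contains a real gap. You propose to merge the local corrections over overlapping $T$-charts by ``a partition of unity in $t$ together with a standard gluing lemma for $\Jscr$-holomorphic maps close on overlaps (a Cartan-type splitting, which here reduces to solving a $\dibar_{J_t}$ problem).'' But the target $Y$ is a manifold, not a vector space, so neither convex combinations in $t$ nor a linear $\dibar$-based Cartan splitting is available. The nonlinear Cartan-pair gluing lemma from standard Oka theory glues over a Cartan pair in the \emph{Stein source}, not over patches in the parameter space $T$, which carries no complex structure. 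The paper flags exactly this obstruction and bypasses it: rather than gluing holomorphic maps over $T$-patches, it invokes a black-box parametric lemma (\cite[Lemma 6.3]{Forstneric2024Runge}, restated here as Lemma \ref{lem:main}) whose output homotopy is \emph{fixed for $t$ outside the current patch} (condition (d) there). The finite induction over triples $(P_0^\ell,P_1^\ell,P^\ell)$ covering a compact piece of $T$ then simply concatenates these homotopies; no gluing of holomorphic maps across $T$-overlaps is ever performed. A cut-off $\chi_1(x)\chi_2(t)$ is applied to the \emph{homotopy parameter} $s$, not to the map values, to extend each local homotopy from the pseudoconvex domain $V$ back to all of $X$.

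Two further points. Your step (b), crossing critical points of $\rho_t$ via a handlebody decomposition, is not needed: that analysis is already absorbed into Lemma \ref{lem:main}, which handles the full extension on the fixed Stein manifold $(V',J_{t_0})$ in one stroke; the paper's exhaustion $K^0\subset K^1\subset\cdots$ is by $\Jscr$-convex sets with no reference to critical points. And the bound $r\ge 2n+11$ is not ``two more than in Theorem \ref{th:Hamilton} to accommodate the gluing'': it is exactly $r\ge 2k+2n+9$ from Theorem \ref{th:Hamilton} with $k=1$, the minimum needed for the straightening diffeomorphisms $\Phi_t$ to be $\Cscr^1$ (see Remark \ref{rem:Hamilton}).
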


\begin{remark}\label{rem:Hamilton}
The choice of the integer $r$ in condition (c) is dictated by 
Theorem \ref{th:Hamilton}. If $k\ge 1$ and $r\ge 2k+2n+9$ are integers 
or $k=r=+\infty$, it follows from Theorem \ref{th:Hamilton} that every
continuous $\Jscr$-holomorphic map $f:U\to Y$ on an open subset 
$U\subset T\times X$ is of class $\Cscr^{0,k}$. 
Approximation in the fine $\Cscr^{0,0}$ topology (see (iii) in the theorem)
can then be upgraded to approximation in the fine $\Cscr^{0,k}$ topology;
see the last paragraph in \cite[Theorem 1.6]{Forstneric2024Runge}.
We shall not formally state or prove this generalisation since 
it follows from standard arguments.
%easily from the proof of  \cite[Theorem 1.6]{Forstneric2024Runge}.
\end{remark}

We first explain the special case of 
Theorem \ref{th:Oka} with $Y=\C$. 
In the following version of the Oka--Weil theorem for 
tame families of Stein structures, the parameter space $T$ can  
be more general than in Theorem \ref{th:Oka}. 
The special case when $X$ is an open surface is 
given by \cite[Theorem 1.1]{Forstneric2024Runge}.  

%
%
% OKA-WEIL THEOREM FOR A FAMILY OF COMPLEX STRUCTURES
% 
\begin{theorem}[The Oka--Weil theorem for tame families of Stein structures]
\label{th:OkaWeil}
	Assume that $X$ is a smooth manifold of dimension $2n$,
	$T$ is a locally compact and paracompact Hausdorff space, 
	$k\ge 1$ and $r\ge 2k+2n+9$ are integers or $k=r=+\infty$, 
	$\!\mathscr{J}=\{J_t\}_{t \in T}$ is a tame family of Stein structures 
	of class $\Cscr^{0,r}$ on $X$, 
	$K\subset T\times X$ is a proper over $T$ and
	$\mathscr{J}$-convex subset, 
	$U \subset T \times X$ is an open set containing $K$, and
	$f:U\to\C$ is a $\mathscr{J}$-holomorphic function. 
	Then, $f\in \Cscr^{0,k}(U)$ and it 
	can be approximated in the fine $\Cscr^{0,k}$ topology on $K$
	by $\mathscr{J}$-holomorphic functions $F: T\times X\to \C$. 
	If in addition $Q$ is a closed subset of $T$ and 
	$f$ is also $\Jscr$-holomorphic on $Q\times X$,   
	then $F$ can be chosen such that $F=f$ on $Q\times X$.
\end{theorem}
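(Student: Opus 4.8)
The plan is to run the classical Oka--Weil approximation scheme --- fibrewise exhaustion, convex bumps, handle crossings (cf.\ \cite[Ch.~2--3]{Forstneric2017E} or \cite{Hormander1990}) --- parametrically, with tameness supplying the exhaustion and the two stability results of Section~\ref{sec:Hamilton} supplying the dependence on $t$. By Remark \ref{rem:Hamilton} (a consequence of Theorem \ref{th:Hamilton}), $f\in\Cscr^{0,k}(U)$, and a $\Cscr^{0,0}$-approximation of $f$ by a $\mathscr{J}$-holomorphic map on a neighbourhood of $K$ automatically upgrades to a $\Cscr^{0,k}$-approximation on $K$. So it suffices to produce a $\mathscr{J}$-holomorphic $F:T\times X\to\C$ that is $\Cscr^{0,0}$-close to $f$ on $K$ and equal to $f$ on $Q\times X$.

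Since $\mathscr{J}$ is tame and $T$ is locally compact, paracompact Hausdorff, Theorem \ref{th:tame2} (with Theorem \ref{th:tame} applied in a sufficiently high fibrewise regularity order) gives a function $\rho\in\Cscr^{0,r}(T\times X)$ whose fibres $\rho_t$ are strongly $J_t$-plurisubharmonic exhaustions of $X$, with $\rho<0$ on $K$ and $\rho>0$ off $U$. Choosing $0=c_0<c_1<c_2<\cdots\to+\infty$, the sublevel sets $A_m=\{\rho\le c_m\}$ are proper over $T$ and $\mathscr{J}$-convex (Theorem \ref{th:tame2} again), they exhaust $T\times X$, and $K\subset A_0\subset U$; so $f$ is already $\mathscr{J}$-holomorphic on a neighbourhood of $A_0$. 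The theorem then follows from the inductive step: given a $\mathscr{J}$-holomorphic $g_m$ on a neighbourhood of $A_m$, equal to $f$ on $Q\times X$ and $\Cscr^{0,0}$-close to $f$ on $K$, produce such a $g_{m+1}$ on a neighbourhood of $A_{m+1}$ with $\|g_{m+1}-g_m\|_{A_m}$ arbitrarily small; the telescoping limit $F=\lim_m g_m$ then converges uniformly on each $A_m$, is $\mathscr{J}$-holomorphic on $T\times X$, and is the required map.

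For the inductive step I would cover $T$ by relatively compact open sets $\{T_\nu\}$ with locally finite closures such that for each $\nu$ some $t_\nu\in T_\nu$ and regular value $c>c_{m+1}$ of $\rho_{t_\nu}$ yield a domain $D_\nu=\{\rho_{t_\nu}<c\}\Subset X$ that contains the fibres of $A_{m+1}$ over $\overline{T_\nu}$ and is, after shrinking $T_\nu$ (by Lemma \ref{lem:spsh}), strongly $J_t$-pseudoconvex with Stein interior for all $t\in\overline{T_\nu}$. By Theorem \ref{th:Hamilton} there is a continuous family of $\Cscr^k$ biholomorphisms $G_t\colon(D_\nu,J_t)\to(G_t(D_\nu),J_{t_\nu})$ with $G_{t_\nu}=\Id$, which transports the step over $T_\nu$ into the classical Oka--Weil problem of raising, in the \emph{fixed} Stein domain $(D_\nu,J_{t_\nu})$, the sublevel of the strongly $J_{t_\nu}$-plurisubharmonic function $\rho_t\circ G_t^{-1}$ from $c_m$ to $c_{m+1}$. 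After a small fibrewise-continuous perturbation making $\rho_{t_\nu}$ Morse on the relevant region (so that the bump/handle combinatorics is constant over $T_\nu$), this is a fixed finite composition of cut-off multiplications and Kohn solutions of $\dibar$-equations on $D_\nu$, depending continuously on $t$ because $G_t$, the functions $\rho_t\circ G_t^{-1}$, and the Kohn operators all do (Theorem \ref{th:dibar}). Transporting back yields, over each $T_\nu$, a $\mathscr{J}$-holomorphic $h_\nu$ on a neighbourhood of $A_{m+1}$ that is close to $g_m$ on $A_m$. The scalar case is now genuinely easier than a general Oka target: taking a partition of unity $\{\chi_\nu\}$ on $T$ subordinate to $\{T_\nu\}$, the function $g_{m+1}:=\sum_\nu\chi_\nu h_\nu$ is again $\mathscr{J}$-holomorphic --- each $\chi_\nu$ depends on $t$ only, so a fibrewise finite combination of $J_t$-holomorphic functions is $J_t$-holomorphic --- is defined near $A_{m+1}$, and on $A_m$ equals $\sum_\nu\chi_\nu h_\nu\approx\sum_\nu\chi_\nu g_m=g_m$. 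The interpolation condition $F=f$ on $Q\times X$ is preserved throughout by the standard mechanism: use the minimal-norm (Kohn) solution of $\dibar$, which vanishes wherever the data does, and insert cut-offs in $t$ vanishing on $Q$, so that the corrections are zero over $Q$ at every stage; see the proof of \cite[Thm.~1.6]{Forstneric2024Runge} for the analogous argument in the surface case.

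The main obstacle is making this bump-by-bump argument parametric: the function algebras $\Oscr_{J_t}$ and the critical points of $\rho_t$ all move with $t$, so one cannot argue one complex structure at a time. Theorem \ref{th:Hamilton} is precisely the tool that overcomes this, by straightening the family of complex structures to a fixed one on each relevant domain; once it is available, what remains --- the regularity bookkeeping (the bound $r\ge 2k+2n+9$ originates from Theorems \ref{th:Hamilton} and \ref{th:dibar}), the choice of the auxiliary domains $D_\nu$, the fibrewise Morse perturbation with parameters, and the partition-of-unity amalgamation over the paracompact base --- is routine, if lengthy.
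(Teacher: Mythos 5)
Your overall architecture is the same as the paper's: straighten the family of complex structures over small parameter sets via Theorem \ref{th:Hamilton}, approximate in the resulting fixed Stein structure, glue the local-in-$t$ approximants with a partition of unity on $T$ (legitimate because the target $\C$ is linear), induct over a $\Jscr$-convex exhaustion supplied by tameness, and handle $Q$ by making the corrections vanish over $Q$. However, there is a genuine gap in your execution of the local approximation step. After transporting to the reference structure $J_{t_\nu}$ you propose to re-run the classical bump-by-bump Oka--Weil construction on the sublevel sets of $\rho_t\circ G_t^{-1}$, asserting that "after a small fibrewise-continuous perturbation making $\rho_{t_\nu}$ Morse \dots the bump/handle combinatorics is constant over $T_\nu$". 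This cannot be arranged: $T_\nu$ is an arbitrary (locally compact, paracompact Hausdorff) parameter set, not a manifold, and even over an interval a continuous family of exhaustion functions cannot in general be perturbed to have locally constant critical data (critical points are born and die in pairs as $t$ moves). Making the single function $\rho_{t_\nu}$ Morse says nothing about the family $\rho_t\circ G_t^{-1}$, and the "fixed finite composition of cut-offs and Kohn solutions" therefore does not exist as described. A second, smaller omission in the same step: $G_t$ maps $D_\nu$ onto the $t$-dependent domain $G_t(D_\nu)$, so the straightened problem does not live on a fixed Stein domain until you interpose fixed intermediate domains $V\Subset V'$ with $G_t(K_t)\subset V\subset V'\subset G_t(D_\nu)$ for all $t\in\overline{T_\nu}$, as in \eqref{eq:inclusions}.

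The fix is to discard the Morse/bump machinery entirely, which is not needed for a linear target. Once Theorem \ref{th:Hamilton} has produced the biholomorphisms $G_t$ and you have checked that the compacts $G_t\bigl((A_m)_t\bigr)$ are $\Oscr$-convex in a fixed Stein domain $V'$ with respect to the single structure $J_{t_\nu}$ and vary upper semicontinuously in $t$, the parametric Oka--Weil theorem for a \emph{fixed} Stein structure with continuously varying holomorphically convex compact fibres (\cite[Theorem 2.8.4]{Forstneric2017E}, or \cite[Lemma 5.3]{Forstneric2024Runge}) gives the desired fibrewise $J_{t_\nu}$-holomorphic approximants on $V'$ directly; pulling back by $G_t$ and gluing with a partition of unity in $t$ completes the step. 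This is exactly what the paper does. Your treatment of the interpolation over $Q$ is also thinner than needed: multiplying corrections by a cut-off in $t$ vanishing near $Q$ leaves the map uncorrected (hence not holomorphic on the larger set) for $t$ near but not in $Q$; the paper first extends $f|_{Q\times X}$ to a $\Jscr$-holomorphic family on a neighbourhood $T'\times\Omega$ of $Q\times\Omega$ and patches, so that no correction is needed there. With these repairs your proof coincides with the paper's.
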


Theorem \ref{th:OkaWeil} has the following corollary which 
shows that tameness of $\Jscr$ is implied by, and hence
equivalent to the one-fibre extension property for $\Jscr$-holomorphic
functions. 

%
%
% ONE-FIBRE EXTENSION PROPERTY CHARACTERISES TAMENESS
%
%
\begin{corollary}\label{cor:extension}
Assume that $T$ is a locally compact and paracompact Hausdorff space, 
$X$ is a smooth manifold, and $\mathscr{J}=\{J_t\}_{t \in T}$ 
is a continuous family of smooth Stein structures on $X$.
\begin{enumerate}[\rm (a)]
\item If $\Jscr$ is tame then every $\Jscr$-holomorphic function 
on $Q\times X$, where $Q$ is a closed subset of $T$, extends to a 
$\Jscr$-holomorphic function on $T\times X$. 
\item Conversely, if for every $f\in \Oscr_{J_{t_0}}(X)$ $(t_0\in T)$ 
there are a neighbourhood $T_0\subset T$ of $t_0$ and a 
$\Jscr$-holomorphic function $F:T_0\times X\to \C$
such that $F(t_0,\cdotp)=f$, then the family $\Jscr$ is tame.
\end{enumerate}
\end{corollary}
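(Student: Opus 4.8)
\textbf{Proof strategy for Corollary \ref{cor:extension}.}

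Part (a) is essentially immediate from Theorem \ref{th:OkaWeil}. The plan is to apply that theorem with the choices $K=\emptyset$ (or, if one wants a genuinely nonempty $\Jscr$-convex set, $K=\wh{\emptyset}_{\!\!\Jscr}=\emptyset$, which is trivially proper over $T$ and $\Jscr$-convex), $U=\emptyset$, and with $Q$ the given closed subset of $T$. A $\Jscr$-holomorphic function $f$ on $Q\times X$ is, in particular, a $\Jscr$-holomorphic function defined on the open set $U\cup(Q\times X)=Q\times X$ in the sense required; extending $f$ arbitrarily (continuously) to all of $T\times X$ — possible since $Q$ is closed in the paracompact Hausdorff space $T$, via a Tietze-type argument applied fibrewise, or simply by taking any continuous extension of the coefficient data — we are in the setting of the theorem. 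The conclusion gives a $\Jscr$-holomorphic $F:T\times X\to\C$ with $F=f$ on $Q\times X$, which is exactly the asserted extension. The only mild point to check is that the smoothness hypothesis ``$\Jscr$ of class $\Cscr^{0,r}$'' with $r=+\infty$ is met; this holds because the family is assumed to consist of smooth Stein structures.

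Part (b) is the substantive direction: we must deduce tameness of $\Jscr$ from the one-fibre extension property. The plan is to argue by contradiction, using the nontameness phenomenon as isolated in Lemma \ref{lem:bounded} and the proof of Theorem \ref{th:nontame}, but in the abstract. Suppose $\Jscr$ is not tame at some $t_0\in T$. Then there is a compact set $K\subset X$ and an open $U\supset\wh{K}_{\!J_{t_0}}$ such that for every neighbourhood $T_0$ of $t_0$ there is $t\in T_0$ with $\wh{K}_{\!J_t}\not\subset U$. Pick such points $t_j\to t_0$ together with $x_j\in\wh{K}_{\!J_{t_j}}\setminus U$. After passing to a subnet we may assume $x_j\to x_\infty$ for some $x_\infty\in X\setminus U$ (here one wants the $x_j$ to stay in a compact set; if they escape to infinity the argument is even easier, as then no global $J_{t_0}$-holomorphic function dominating on $K$ can be a limit of $J_t$-holomorphic functions uniformly bounded on $K$). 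Since $x_\infty\notin\wh{K}_{\!J_{t_0}}$, there is $f\in\Oscr_{J_{t_0}}(X)$ with $|f(x_\infty)|>\max_K|f|+\delta$ for some $\delta>0$, hence $|f|>\max_K|f|+\delta/2$ on a neighbourhood of $x_\infty$, so $|f(x_j)|>\max_K|f|+\delta/2$ for all large $j$. By the assumed one-fibre extension property there are a neighbourhood $T_0$ of $t_0$ and a $\Jscr$-holomorphic $F:T_0\times X\to\C$ with $F(t_0,\cdotp)=f$; write $f_t=F(t,\cdotp)\in\Oscr_{J_t}(X)$, so $f_t\to f$ uniformly on compacts as $t\to t_0$. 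For large $j$, $t_j\in T_0$ and $x_j\in\wh{K}_{\!J_{t_j}}$ give $|f_{t_j}(x_j)|\le\max_K|f_{t_j}|$; but $|f_{t_j}(x_j)|\to|f(x_\infty)|\ge\max_K|f|+\delta$ while $\max_K|f_{t_j}|\to\max_K|f|$, a contradiction. Hence $\Jscr$ is tame.

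\textbf{Main obstacle.} The delicate point in part (b) is controlling the points $x_j\in\wh{K}_{\!J_{t_j}}$: a priori nothing prevents them from wandering off to infinity in $X$, and in that regime one cannot extract a limit point $x_\infty$ at which to test a single function $f\in\Oscr_{J_{t_0}}(X)$. The clean way to handle both cases uniformly is to phrase the contradiction in terms of the family $f_t=F(t,\cdotp)$ directly: given any $f\in\Oscr_{J_{t_0}}(X)$ extended to $\{f_t\}$, nontameness at $t_0$ forces (after passing to a subnet) a point $x_\infty\in X$, possibly in $X\setminus U$ for $U$ chosen around $\wh{K}_{\!J_{t_0}}$, with $|f(x_\infty)|\le\max_K|f|$ for \emph{every} such $f$; but by definition of the holomorphic hull this says $x_\infty\in\wh{K}_{\!J_{t_0}}\subset U$, contradicting $x_\infty\notin U$. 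One must phrase this so that the single relevant extended family is produced \emph{after} fixing which $f$ violates the hull inequality — which is exactly what the contrapositive form in the statement allows, since the hypothesis furnishes an extending family for each individual $f\in\Oscr_{J_{t_0}}(X)$. I expect the bookkeeping of nets versus sequences (needed because $T$ is only assumed locally compact Hausdorff, not metrizable) to be the only real care required; the analytic content is entirely contained in Theorem \ref{th:OkaWeil} and in the elementary maximum-modulus estimate underlying Lemma \ref{lem:bounded}.
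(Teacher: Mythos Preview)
Part (a) is fine and matches the paper, which simply notes that it is contained in Theorem \ref{th:OkaWeil}.

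For part (b), your overall contradiction strategy is the same as the paper's, and you correctly identify the only real difficulty: controlling the points $x_j\in\wh K_{\!J_{t_j}}\setminus U$. However, your proposed handling of the unbounded case is a genuine gap. The parenthetical claim that ``if they escape to infinity the argument is even easier'' is not justified: from $|f_{t_j}(x_j)|\le\max_K|f_{t_j}|$ you only know the left side is bounded, but since $f_{t_j}\to f$ merely uniformly on compacta you cannot relate $f_{t_j}(x_j)$ to $f(x_j)$ when $x_j\to\infty$, and no contradiction emerges. Your alternative phrasing in the ``Main obstacle'' paragraph still presupposes the existence of a limit point $x_\infty\in X$, so it does not cover the escaping case either.

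The paper closes this gap with a simple device you are missing: choose the neighbourhood $U\Subset X$ to be \emph{relatively compact}, and pick $x_j\in\wh K_{\!J_{t_j}}\cap bU$ rather than in $\wh K_{\!J_{t_j}}\setminus U$. This intersection is nonempty because $K\subset U$, $\wh K_{\!J_{t_j}}\not\subset U$, and every connected component of the hull $\wh K_{\!J_{t_j}}$ meets $K$ (a consequence of Rossi's local maximum principle on the Stein manifold $(X,J_{t_j})$). Since $bU$ is compact, a subnet of $(x_j)$ converges to some $x_0\in bU\subset X\setminus\wh K_{\!J_{t_0}}$, and from there your limiting argument goes through verbatim: $|f_{t_j}(x_j)|\le\max_K|f_{t_j}|$ passes to $|f(x_0)|\le\max_K|f|$ for every extendable $f\in\Oscr_{J_{t_0}}(X)$, contradicting $x_0\notin\wh K_{\!J_{t_0}}$.
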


\begin{proof}[Proof of Corollary \ref{cor:extension}]
Note that (a) is a part of Theorem \ref{th:OkaWeil}.
We prove (b) by contradiction. 
For simplicity we assume that $T$ is first countable; in the general
case the same argument works with sequences replaced by nets.
Assume that $\Jscr$ is not tame.
Then there are a point $t_0\in T$, a compact 
$J_{t_0}$-convex set $K\subset X$, a neighbourhood 
$U\Subset X$ of $K$, and a sequence $t_j\in T$ with 
$\lim_{j\to\infty} t_j=t_0$ such that the hull $\wh K_{J_{t_j}}$ is not
contained in $U$ for any $j$. Since $K\subset U$, 
it follows that $\wh K_{J_{t_j}} \cap bU\ne\varnothing$.
Pick a point $x_j\in \wh K_{J_{t_j}} \cap bU$ for every $j$. 
Since $bU$ is compact, passing to a subsequence we may assume 
that $x_j$ converges to a point $x_0\in bU$ as $j\to\infty$. 
Assume that  $T_0\subset T$  
is a neighbourhood of $t_0$ and $F:T_0\times X\to \C$ 
is a $\Jscr$-holomorphic function. Let $f=F(t_0,\cdotp)$.
For every sufficiently big $j$ we have $t_j\in T_0$ and
hence $|F(t_j,x_j)|\le \max_{x\in K} |F(t_j,x)|$. Taking the limit
as $j\to\infty$ gives $|f(x_0)|\le \max_{x\in K} |f(x)|$.
Since $x_0 \not\in \wh K_{J_{t_0}}$, there exists a
function $f\in \Oscr_{J_{t_0}}(X)$ violating the above inequality,
and hence such $f$ does not admit a $\Jscr$-holomorphic
extension to $T_0\times X$ for any neighbourhood $T_0$ of $t_0$.  
\end{proof}

%
%  CHARACTERISATION OF J-CONVEX SUBSETS BY J-HOLO FUNCTIONS
%

For a tame family $\Jscr$, Corollary \ref{cor:extension} also implies 
the following characterisation of $\Jscr$-convex sets by 
$\Jscr$-holomorphic functions. 

\begin{corollary}\label{cor:Jconvex}
Let $T$, $X$, and $\mathscr{J}=\{J_t\}_{t \in T}$ be 
as in Corollary \ref{cor:extension}, and assume that $\Jscr$ is tame.
Then a proper subset $K\subset Z=T\times X$ is $\Jscr$-convex if 
and only if for every point $z_0=(t_0,x_0)\in Z\setminus K$ there exists
a $\Jscr$-holomorphic function $f:Z\to\C$ such that 
$|f(z_0)|>\sup_{z\in K} |f(z)|$.
\end{corollary}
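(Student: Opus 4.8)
The plan is to deduce both implications from the tameness hypothesis together with Corollary \ref{cor:extension} and the fibrewise Oka--Weil theory already established. One direction is immediate: if $K=\wh K_{\!\!\Jscr}$ and $z_0=(t_0,x_0)\notin K$, then $x_0\notin (\wh{K_{t_0}})_{J_{t_0}}$, so there is a $J_{t_0}$-holomorphic function $g\in\Oscr_{J_{t_0}}(X)$ with $|g(x_0)|>\max_{x\in K_{t_0}}|g|$. Applying Corollary \ref{cor:extension}(a) with $Q=\{t_0\}$ (viewing $g$ as a $\Jscr$-holomorphic function on $\{t_0\}\times X$) yields a $\Jscr$-holomorphic $f:Z\to\C$ with $f(t_0,\cdotp)=g$. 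After shrinking nothing and possibly replacing $f$ by a high power, the inequality $|f(z_0)|>\sup_{z\in K}|f(z)|$ will fail only if the supremum over $K$ is taken over fibres $K_t$ with $t\neq t_0$; this is where tameness enters. Since $K$ is proper and $\Jscr$-convex, the fibres $K_t$ are compact and vary upper semicontinuously; combined with continuity of $f$ and of the family, for $t$ near $t_0$ we have $\max_{x\in K_t}|f(t,x)|$ close to $\max_{x\in K_{t_0}}|g|<|g(x_0)|=|f(z_0)|$, so by a partition-of-unity/cut-off argument in $t$ (multiplying $f$ by a $\Jscr$-holomorphic function obtained from the constant $1$, times a plateau function of $t$ that is localized near $t_0$ — or more simply, invoking Theorem \ref{th:OkaWeil} to modify $f$ away from a neighbourhood of $t_0$) we arrange $|f(z_0)|>\sup_{z\in K}|f(z)|$.

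For the converse, suppose $K\subset Z$ is proper and satisfies the stated separation property; I must show $K=\wh K_{\!\!\Jscr}$. The inclusion $K\subset\wh K_{\!\!\Jscr}$ is automatic since fibrewise $K_t\subset(\wh{K_t})_{J_t}$. For the reverse inclusion, take $z_0=(t_0,x_0)\in\wh K_{\!\!\Jscr}$, i.e.\ $x_0\in(\wh{K_{t_0}})_{J_{t_0}}$, and suppose for contradiction $z_0\notin K$. By hypothesis there is a $\Jscr$-holomorphic $f:Z\to\C$ with $|f(z_0)|>\sup_{z\in K}|f(z)|\ge\sup_{x\in K_{t_0}}|f(t_0,x)|$. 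But $f_{t_0}=f(t_0,\cdotp)$ is $J_{t_0}$-holomorphic, so $|f_{t_0}(x_0)|\le\sup_{x\in K_{t_0}}|f_{t_0}(x)|$ because $x_0\in(\wh{K_{t_0}})_{J_{t_0}}$ — a contradiction. Hence $z_0\in K$ and $\wh K_{\!\!\Jscr}\subset K$.

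The step I expect to be the main obstacle is the first one: producing, from a function that separates $x_0$ from $K_{t_0}$ in the single fibre $(X,J_{t_0})$, a genuinely \emph{global} $\Jscr$-holomorphic function on $Z$ that separates $z_0$ from \emph{all} of $K$ and not merely from $K_{t_0}$. This requires combining the one-fibre extension (Corollary \ref{cor:extension}(a)), the upper semicontinuity of the fibres $K_t$ coming from properness, and the fine approximation in Theorem \ref{th:OkaWeil} to correct the behaviour of the extension over parameter values $t\neq t_0$; the tameness of $\Jscr$ is exactly what makes these corrections controllable. Everything else is a direct consequence of the defining property of the fibrewise holomorphic hull.
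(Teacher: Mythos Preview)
Your converse direction is exactly the paper's argument. Your forward direction has the right skeleton---extend $g$ to a $\Jscr$-holomorphic $F$ on $Z$ via Corollary~\ref{cor:extension}(a), then use upper semicontinuity of the fibres $K_t$ (from properness of $K$) together with continuity of $F$ to get a neighbourhood $T_0\ni t_0$ on which $|F(t_0,x_0)|>\max_{x\in K_t}|F(t,x)|$---but you over-complicate the final localisation step, which you flag as ``the main obstacle''.

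It is not an obstacle at all: since $\Jscr$-holomorphicity is a purely fibrewise condition (a map is $\Jscr$-holomorphic iff $f(t,\cdot)$ is $J_t$-holomorphic for each $t$), you may multiply by \emph{any} continuous function of $t$ alone. The paper simply takes a continuous $\chi:T\to[0,1]$ with $\chi(t_0)=1$ and $\supp\chi\subset T_0$, and sets $f(t,x)=\chi(t)F(t,x)$. This is $\Jscr$-holomorphic, satisfies $|f(z_0)|=|F(z_0)|$, and $\sup_K|f|\le\sup_{t\in T_0}\max_{K_t}|F(t,\cdot)|<|F(z_0)|$. No high powers, no invocation of Theorem~\ref{th:OkaWeil}, and no partition-of-unity gymnastics are needed. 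Your parenthetical ``plateau function of $t$'' is exactly this idea, but you bury it among heavier alternatives and seem not to trust that it already solves the problem.
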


\begin{proof}
If $K$ is not $\Jscr$-convex then one of its fibres
$K_{t_0}$ is not $J_{t_0}$-convex. 
If $x_0\in (\wh{K_{t_0}})_{J_{t_0}}\setminus K_{t_0}$ 
then every $\Jscr$-holomorphic function $f$ satisfies 
$|f(t_0,x_0)| \le \sup_{z\in K} |f(z)|$. Assume now that $K$ is $\Jscr$-convex
and let $z_0=(t_0,x_0)\in Z\setminus K$. Then, 
$x_0\not \in K_{t_0}=(\wh{K_{t_0}})_{J_{t_0}}$, so there exists 
$f_{t_0}\in\Oscr_{J_{t_0}}(X)$ with 
$|f_{t_0}(x_0)|>\max_{x\in K_{t_0}}|f_{t_0}(x)|$.
By Corollary \ref{cor:extension} (a) there is a $\Jscr$-holomorphic
function $F:Z\to \C$ with $F(t_0,\cdotp)=f_{t_0}$.
Since $K$ is proper, its fibres $K_t$ are upper
semicontinuous, so there is a neighbourhood $T_0\subset T$
of $t_0$ such that $|F(t_0,x_0)|>\max_{x\in K_t}|F(t,x)|$ 
for all $t\in T_0$. If $\chi:T\to [0,1]$ is a continuous function with 
$\chi(t_0)=1$ and $\supp \chi\subset T_0$ then 
the function $f:Z\to \C$ given by $f(t,x)=\chi(t) F(t,x)$ satisfies
the conclusion of the corollary.
\end{proof}

%
%  PROOF OF THE OKA-WEIL THEOREM
%
\begin{proof}[Proof of Theorem \ref{th:OkaWeil}]
%
%The proof is similar to that of \cite[Theorem 1.1]{Forstneric2024Runge}, with modifications to deal with the fact that the $J_t$-convex hulls of a compact set in $X$ may depend on $t$.
%
We first consider the case when $T$ is compact and $Q=\varnothing$. 
Since the set $K\subset T\times X$ is proper over the compact set $T$, 
$K$ is compact as well. Choose a compact set $L\subset X$ 
such that $K\subset T\times L$ and set 
$L'=\wh {T\times L}_{\!\!\Jscr}$ \eqref{eq:whK2}. 
It suffices to show that the function $f$ in the theorem can be 
approximated as closely as desired in $\Cscr^{0,k}(K)$ 
by $\Jscr$-holomorphic functions $F:U'\to \C$ on an open 
neighbourhood $U'\subset T\times X$ of $L'$. If this holds  then  
the conclusion follows by an
induction with respect to an exhaustion of $T\times X$
by an increasing sequence of compact $\Jscr$-convex sets.

Consider the problem for $t\in T$ 
near a fixed $t_0 \in T$. We denote by $K_t\subset X$ the fibre 
of $K$ over $t\in T$, and likewise for the other sets. 
For a subset $T_0\subset T$ we also write $K_{T_0}=K\cap (T_0\cap X)$
and $U_{T_0} = U\cap (T_0\times X)$.
Choose a relatively compact strongly $J_{t_0}$-pseudoconvex 
domain $\Omega\Subset X$ with smooth boundary 
such that $L'_{t_0}=\wh L_{J_{t_0}} \subset \Omega$.
Theorem \ref{th:Hamilton} furnishes a neighbourhood $T_0\subset T$
of $t_0$ and a map $\Phi: T_0\times \Omega\to T_0\times X$ 
of class $\Cscr^{0,k}$ such that $\Phi(t,x)=(t,\Phi_t(x))$ and
\begin{equation}\label{eq:Phit}
	\Phi_t : \Omega \to \Phi_t(\Omega) \subset X \ \ 
	\text{is a $(J_t,J_{t_0})$-biholomorphism for every $t\in T_0$},
\end{equation} 
with $\Phi_{t_0} = \operatorname{Id}_\Omega$. 
Choose $J_{t_0}$-Stein domains $V,V'$ in $X$ such that 
$L'_{t_0}\subset V \Subset V' \Subset  \Omega$. 
Shrinking $U$ around $K$ and $T_0$ around $t_0$ if necessary, 
the following inclusions hold for every $t\in T_0$ (see Fig.\ \ref{fig:PhitKt}):
\begin{equation}\label{eq:inclusions}
	U_t\subset\Omega,\quad 
	\Phi_t(K_t) 
	\subset V \subset V' \subset \Phi_t(\Omega),\quad 
	L'_t\subset V \subset  \Phi_t^{-1}(V').
\end{equation}

\begin{figure}[h] 
	\centering
	\includegraphics[scale=0.95]{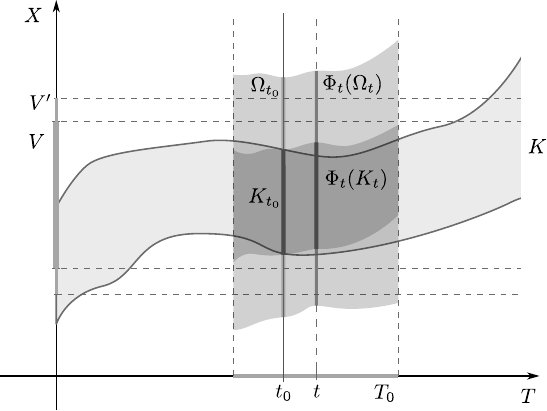}
  	\caption{The shaded areas depict the sets 
	$\Phi_t(K_t)\subset \Phi_t(\Omega)$ for $t\in T_0$.
	\label{fig:PhitKt}}
\end{figure}

Let $f:U\to\C$ be as in the theorem, so $f_t=f(t,\cdotp)$ is $J_t$-holomorphic 
on $U_t\supset K_t$ for every $t\in T$. 
The function $f\circ \Phi^{-1}:\Phi(U_{T_0}) \to \C$
is then continuous and fibrewise $J_{t_0}$-holomorphic, 
hence of class $\Cscr^{0,\infty}$. Since $K_t$ is $J_t$-convex in $\Omega$ 
and the map $\Phi_t$ \eqref{eq:Phit} is $(J_t,J_{t_0})$-biholomorphic, 
$\Phi_t(K_t)$ is $J_{t_0}$-convex in $\Phi_t(\Omega)$ 
(and hence in $V'\subset \Phi_t(\Omega)$, see \eqref{eq:inclusions}) 
for every $t\in T_0$. Hence, the set $K':=\Phi(K_{T_0})\subset T_0\times X$ 
is proper over $T_0$ and its fibres are $J_{t_0}$-convex in the Stein manifold
$(V',J_{t_0})$. By \cite[Lemma 5.3]{Forstneric2024Runge} 
there is a function $F': T_0\times V' \to \C$ of class $\Cscr^{0,\infty}$ 
which is fibrewise $J_{t_0}$-holomorphic and 
approximates $f\circ \Phi^{-1}$ as closely as desired in 
$\Cscr^{0,k}(K')$. Since $\Phi_t(V) \subset V'$ by the third inclusion in \eqref{eq:inclusions}, the function $F:=F'\circ \Phi:T_0\times V\to\C$
is well-defined, of class $\Cscr^{0,k}$, and it approximates 
$f$ in $\Cscr^{0,k}(K_{T_0})$. 
By the fifth inclusion in \eqref{eq:inclusions} we have that 
$L'\cap (T_0\times X) \subset T_0\times V$.
	
This gives a finite open cover $\{T_j\}_j$ of $T$ and 
open sets $V_j\subset X$ such that $\{T_j\times V_j\}_j$
is a cover of $L'$, and $\Jscr$-holomorphic functions 
$F_j:T_j\times V_j\to\C$ approximating $f$ in $\Cscr^{0,k}(K_{T_j})$
for every $j$. 
Choose a partition of unity $1 = \sum_j \chi_j$ on $T$ 
with $\operatorname{supp} \chi_j \subset T_j$ for every $j$. 
The function $F(t, x) = \sum_j \chi_j(t)F_j(t, x)$ 
is then well-defined and $\Jscr$-holomorphic 
on a neighbourhood of $L'$ in $T\times X$ and it 
approximates $f$ in $\Cscr^{0,k}(K)$. 
To conclude the proof, it remains to apply an induction with respect
% Franc: change the following part of the sentence
to an exhaustion of $T\times X$ by an increasing family of 
$\Jscr$-convex sets. 

Suppose now that $T$ is compact and $Q\subset T$ is nonempty.
Let $K\subset L' \subset T\times X$ be as above. Choose a 
strongly pseudoconvex domain $\Omega \Subset X$ such that 
$L'\cap (Q\times X) \subset Q\times \Omega$.   
We claim that there is a neighbourhood $T'\subset T$ of $Q$ 
and a $\Jscr$-holomorphic function $f':T'\times \Omega\to \C$
which agrees with $f$ on $Q\times \Omega$. If the complex 
structure $J_t$ is independent of $t\in T'$, this follows 
from the parametric Oka--Weil theorem 
\cite[Theorem 2.8.4]{Forstneric2017E}. In the case at hand, 
we choose a pair of smoothly bounded strongly pseudoconvex domain 
$\Omega_1 \Subset \Omega_2\Subset X$ such that 
$\overline \Omega \subset \Omega_1$, and we cover $Q$ by finitely
many open sets $T_1,\ldots,T_m\subset T$ with points $t_j\in T_j$ 
such that Theorem \ref{th:Hamilton} applies on 
$\overline T_j\times \Omega_2$ for every $j=1,\ldots,m$. This gives  
maps $\Phi_j: \overline T_j\times \Omega_2 \to \overline T_j\times X$ 
of class $\Cscr^{0,k}$ and of the form \eqref{eq:Phit} such that 
$\Phi_{j,t}: \Omega_2 \to \Phi_{j,t}(\Omega_2)$
is a $(J_t,J_{t_j})$-biholomorphism for every $t\in \overline T_j$.
Choosing the sets $T_j$ small enough we may assume that 
the following inclusions hold for $j=1,\ldots,m$:
\begin{equation}\label{eq:inclusions3}
	\overline T_j \times \overline \Omega \subset
	\Phi_j^{-1}(\overline T_j\times \Omega_1), 
	\quad 
	\overline T_j \times \overline \Omega_1 \subset 
	\Phi_j(\overline T_j \times \Omega_2).  
\end{equation}
We apply \cite[Theorem 2.8.4]{Forstneric2017E} to each
function $f\circ \Phi_j^{-1}: (\overline T_j\cap Q) \times \Omega_1\to\C$
(see the second inclusion in \eqref{eq:inclusions3}),
which is fibrewise $J_{t_j}$-holomorphic, to find a  fibrewise 
$J_{t_j}$-holomorphic function 
$\tilde f_j: \overline T_j \times \Omega_1\to\C$ which agrees 
with $f\circ \Phi_j^{-1}$
on $(\overline T_j\cap Q)\times \Omega_1$. The function 
$\tilde f_j \circ \Phi_j$ is then well-defined and  
$\Jscr$-holomorphic on $\overline T_j\times \Omega$ 
(see the first inclusion in \eqref{eq:inclusions3}), and 
it agrees with $f$ on $(\overline T_j \cap Q) \times \Omega$.
Choose a partition of unity $\{\chi_j\}_{j=1}^m$ on a neighbourhood
of $Q$ with $\supp \chi_j \subset T_j$. The function 
$f'=\sum_{j=1}^m \chi_j  (\tilde f_j \circ \Phi_j)$ has 
the desired properties. We now replace
$f$ by $(1-\xi) f+\xi f'$ where $\xi:T\to [0,1]$ is a continuous 
function with compact support contained 
in a small neighbourhood $Q'\supset Q$ such that $\xi=1$
on a neighbourhood of $Q$. 
This new function is $\Jscr$-holomorphic on $U\supset K$
and on $T'\times \Omega$, and it is close to the original 
function $f$ on $K$ if the neighbourhood $Q'\supset Q$ 
was chosen small enough. We apply the previously explained
construction to this new function, working on the complement of 
$Q$ in $T$ to find a $\Jscr$-holomorphic function
$F$ on a neighbourhood $U'$ of $L'$ which approximates $f$ on $K$
and agrees with $f$ on $(Q\times X)\cap U'$. This completes the 
induction step.

When $T$ is a locally compact and paracompact
Hausdorff space, the above proof gives an open locally finite
cover $\Tcal=\{T_j\}_j$ of $T$ (not necessarily countable) and
functions $F_j:\overline T_j\times X\to\C$ which approximate $f$ 
as closely as desired in the $\Cscr^{0,k}$ topology on 
$K\cap (\overline T_j\times X)$ and agree with $f$ on 
$(\overline T_j\cap Q)\times X$. Choosing a partition of unity
$\{\xi_j\}_j$ on $T$ subordinate to $\Tcal$ and setting
$F=\sum_j \xi_j F_j:T\times X\to \C$ gives functions satisfying the theorem. 
\end{proof}

We now turn to the Oka principle in Theorem \ref{th:Oka} where the 
target $Y$ is an arbitrary Oka manifold. We shall use the 
following special case of \cite[Lemma 6.3]{Forstneric2024Runge} 
which we restate using the notation of this paper. In this lemma, 
the Stein structure on $X$ is independent of the parameter $t$.

%
%  THE MAIN LEMMA
% 
\begin{lemma}\label{lem:main}
Assume that $P''\subset\R^N$ is a neighbourhood retract
and $P_0\subset P_1\subset P \subset P'$ are compact subsets of $P''$, 
each contained in the interior of the next one.
Let $X$ be a Stein manifold, $\pi:\C^N\times X\to \C^N$ be the projection, 
and $K\subset \C^N\times X$ be a compact subset such that
$\pi(K)\subset P$ and the fibre $K_t=\{x\in X:(t,x)\in K\}$ 
is $\Oscr(X)$-convex for every $t\in P$.
(The fibre $K_t$ may be empty for some $t$.) 
Assume that $U$ is an open neighbourhood of $K$ 
in $P' \times X$, $Y$ is an Oka manifold endowed with a distance
function $\dist_Y$, and $f:P' \times X \to Y$ is a continuous map
such that for every $t\in P$ the map $f_t=f(t,\cdotp):X\to Y$
is holomorphic on $U_t=\{x\in X:(t,x)\in U\}$. 
Fix $\epsilon>0$. After shrinking the open set $U\supset K$
if necessary, there is a homotopy $f_s : P \times X \to Y$ $(s\in I=[0,1])$ 
satisfying the following conditions. 
\begin{enumerate}[\rm (a)]
\item $f_0=f|_{P\times X}$.
\item $f_s(t,\cdotp) : X\to Y$ is holomorphic on $U_t$
for every $s\in I$ and $t\in P$.
\item $\max_{(t,x)\in K} \dist_Y(f_s(t,x),f(t,x))<\epsilon$ for every $s\in I$. 
\item $f_s(t,\cdotp)=f(t,\cdotp)$ for all $t\in P\setminus P_1$ and $s\in I$.
\item The map $f_1(t,\cdotp):X\to Y$ is holomorphic for every $t$
in a neighbourhood of $P_0$.
\end{enumerate}
\end{lemma}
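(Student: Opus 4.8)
The plan is to deduce the lemma from the characterisation of Oka manifolds by the parametric Convex Approximation Property (CAP) combined with the standard induction over a strongly plurisubharmonic Morse exhaustion of the \emph{fixed} Stein manifold $X$ (here the complex structure does not vary with $t$, so $X$ is a single Stein manifold and the parameter enters only through $t\in P$); see \cite{Forstneric2009CR} and \cite[Chap.\ 5--6]{Forstneric2017E}. First I would reduce to bounded geometry: since $K$ is compact and $\pi(K)\subset P$, there is a compact $L\subset X$ with $K\subset P\times L$, and replacing $L$ by its $\Oscr(X)$-convex hull we may assume $L=\wh L$ and $K_t\subset L$ for all $t\in P$. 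Fix a smooth strongly plurisubharmonic Morse exhaustion $\rho:X\to\R$ with $L\subset X_0:=\{\rho<0\}$ and put $X_c=\{\rho<c\}$. The goal then becomes to attain fibrewise holomorphicity on each $X_c$ in turn, for $t$ near $P_0$, by a sequence of small deformations keeping the map fibrewise holomorphic on $U$, $\epsilon$-close on $K$, and fixed for $t\in P\setminus P_1$.

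The heart of the argument is a parametric \emph{one-step lemma}: assuming $f_s(t,\cdot)$ is already fibrewise holomorphic on $\overline{X_a}$ for $t$ near $P_0$, deform it so as to become holomorphic on $\overline{X_b}$, where $[a,b]$ contains at most one critical value of $\rho$. In the noncritical case I would cover $\overline{X_b}\setminus X_a$ by finitely many small convex bumps and cross them one at a time via a Cartan pair $(\overline{X_a},B)$ with $B$ modelled on a convex set in $\C^n$. On $B$ the map is approximated by a fibrewise holomorphic map into $Y$ using the parametric CAP, and this new piece is glued to the old one over the overlap by the gluing of holomorphic sprays dominating $Y$ --- the step available precisely because $Y$ is Oka. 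Continuity in $t$ and the interpolation conditions are built in by working with the parametric CAP over the neighbourhood retract $P''\supset P'$ and by inserting cutoff functions in $t$ supported in the band between $P_0$ and $P_1$, so that the deformation is switched off outside $P_1$ and is genuine near $P_0$.

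The critical case, where $\rho$ has a single critical point of Morse index $k\le n$ in $(a,b)$, I would treat by first extending fibrewise holomorphicity across the totally real $k$-handle attached along $\{\rho=c\}$: in a Morse chart one approximates, by a Mergelyan--Whitney type argument on the handle together with the parametric CAP, to obtain a fibrewise holomorphic map on a thin neighbourhood of $\overline{X_a}$ together with the attached core disc, after which the leftover collar is crossed by the noncritical procedure. Assembling the countably many steps --- with the corrections chosen to shrink geometrically away from $K$ so that the homotopy converges --- produces a map that is fibrewise holomorphic on all of $X$ for $t$ in a neighbourhood of $P_0$, stays fibrewise holomorphic on $U$ and $\epsilon$-close on $K$, and is unchanged for $t\in P\setminus P_1$. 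The hard part will be the gluing step: for a general Oka target, rather than a vector space, one cannot simply add a correction, and one must instead glue two fibrewise holomorphic maps over a Cartan pair by composing with a dominating spray and solving a parametric Cartan splitting problem with continuous dependence on $t$ and with the interpolation constraints near $P_0$ and off $P_1$; keeping these compatible across the whole induction, and maintaining $\Oscr(X)$-convexity of all intermediate sets, is the technical core.
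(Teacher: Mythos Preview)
The paper does not actually prove this lemma: it introduces it as ``the following special case of \cite[Lemma 6.3]{Forstneric2024Runge}'' and simply restates it in the notation of the present paper, then uses it as a black box in the proof of Theorem \ref{th:Oka}. So there is no proof in the paper to compare against.

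That said, your outline is the correct one and is precisely the machinery from \cite[Chapters 5--6]{Forstneric2017E} that underlies the cited lemma: exhaustion of the fixed Stein manifold $X$ by sublevel sets of a strongly plurisubharmonic Morse function, the noncritical step via convex bumps and Cartan pairs with gluing of dominating sprays (the parametric Oka property), the critical step via Mergelyan-type approximation across the totally real core of the attached handle, and cutoff in the parameter $t$ between $P_0$ and $P_1$ to localise the deformation. You have identified the genuine technical crux, namely the nonlinear gluing over Cartan pairs with continuous dependence on $t\in P$ and the interpolation constraints; this is exactly \cite[Proposition 5.13.1 and Sect.\ 6.5--6.6]{Forstneric2017E}. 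One small refinement: rather than assembling countably many steps to reach all of $X$, for conclusion (e) you only need fibrewise holomorphicity on a neighbourhood of $P_0$, and since the corrections are local in $t$ and the exhaustion is by compact sublevel sets, the induction terminates after finitely many bump/handle steps on each compact piece, with the limit in $c\to\infty$ handled by the usual telescoping estimate.
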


\begin{proof}[Proof of Theorem \ref{th:Oka}]
Let the integers $k\ge 1$ and $r\ge 2k+2n+9$ be as in 
Remark \ref{rem:Hamilton}. 
We shall follow \cite[proof of Theorem 1.6]{Forstneric2024Runge}, 
which treats the case when $X$ is a smooth open surface.
The adjustment we have to make is that the $J_t$-convex hull 
of a compact set in $X$ may now change with the parameter $t$. 
Unlike in the proof of Theorem \ref{th:OkaWeil}, 
we can not glue partial approximants by partitions of unity on $T$
since the target $Y$ is a manifold, so the problem is nonlinear.
Instead, we make all deformations by homotopies and use cut-off
functions in the parameters of the homotopy at every inductive step.

The conditions on $T$ imply that it is locally compact, $\sigma$-compact
and Hausdorff. Choose a normal exhaustion 
$T_1\subset T_2\subset \cdots\subset \bigcup_{j=1}^\infty T_j=T$ by 
compact sets (that is, each $T_j$ is contained in the interior of $T_{j+1}$).
Tameness of $\mathscr{J}$ provides an increasing sequence
$L_1\subset L_2 \subset \cdots\subset \bigcup_{j=1}^\infty L_j=X$
of compact sets forming a normal exhaustion of $X$ such that
for all $j=1,2,\ldots$ we have that 
\[
	K\cap (T_j\times X) \subset T_j\times L_j \quad \text{and}\quad  
	\wh{(T_{j}\times L_{j})}_{\!\!\!\Jscr} 
	\subset  T_{j}\times L_{j+1}.
\]
%(See Fig. \ref{fig:L2}.)

\begin{figure}[h]
  	\centering
	\includegraphics[scale=0.95]{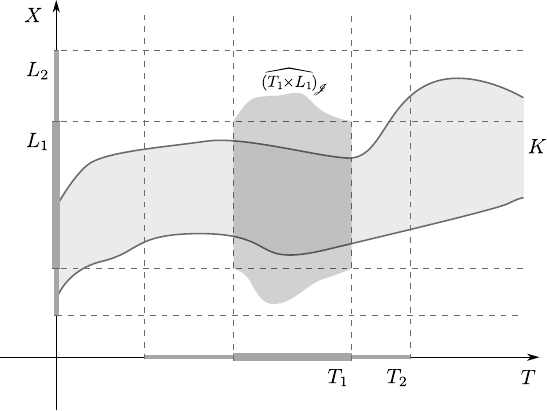}
 	\caption{An illustration of the choice of the set $L_2$.}
	\label{fig:L2}
\end{figure}
\noindent 

Define an increasing sequence of subsets 
$K=K^0\subset K^1\subset \cdots 
\subset \bigcup_{j=0}^\infty K^j=T\times X$ by 
\begin{equation}\label{eq:Kj}
	K^j = \wh{(T_j\!\times\! L_j)}_{\!\!\mathscr{J}} \cup K,
	\quad j=1,2,\ldots.
\end{equation}
Note that each $K^j$ is proper over $T$ and $\mathscr{J}$-convex.
Let $f^0=f:T\times X\to Y$ be the given map in the theorem
which is $\Jscr$-holomorphic on a neighbourhood of $K=K^0$
and on $Q\times X$.
We may assume that the distance function $\dist_Y$ is complete.
Let $\epsilon:T\to (0,+\infty)$ be the continuous function in the theorem.
We shall find a sequence of continuous maps
$f^j:T\times X\to Y$ and homotopies $f^j_s:T\times X\to Y$
$(s\in I=[0,1])$ satisfying the following conditions for every $j=1,2,\ldots$.
\begin{enumerate}[\rm (A)] 
\item $f^j$ is $\mathscr{J}$-holomorphic on a neighbourhood of 
the set $K^j$ \eqref{eq:Kj}. 
%(By the assumption, this also holds for $j=0$.) 
\item $f^j_0=f^{j-1}$ and $f^j_1=f^j$. 
\item $f^j_s$ is $\mathscr{J}$-holomorphic on a neighbourhood of $K^{j-1}$
for every $s\in I$, where the neighbourhood does not depend 
on $s\in I$.
\item $\max_{x\in K_t^{j-1}} \dist_Y (f^{j-1}(t,x),f^j_s(t,x))<2^{-j}\epsilon(t)$
for all $s\in I$ and $t\in T$. 
\item The homotopy $f^j_s(t,\cdotp)$, $s\in I$, is fixed for all $t\in Q$. 
\end{enumerate}
These conditions clearly imply that the homotopies $f^j_s$ $(j\in\N,\ s\in I)$ 
can be assembled into a single homotopy $f_s:T\times X\to Y$ $(s\in I)$ 
from the initial map $f_0=f=f^0$ to the limit $\mathscr{J}$-holomorphic 
map $f_1=F=\lim_{j\to\infty} f^j:T\times X\to Y$ (condition (iv))
such that for every $s\in I$, $f_s$ is $\mathscr{J}$-holomorphic 
on a neighbourhood of $K$ (condition (ii)), 
it approximates $f$ to precision $\epsilon$ on $K$ (condition (iii)), 
and the homotopy is fixed over $Q$ (condition (v)). 

Every step in the induction is of the same kind, so it suffices 
to show the initial step with $j=1$. This is accomplished by a finite 
induction which we now explain.

If the subset $Q\subset T$ in condition (v) is nonempty, 
we choose a small neighbourhood $Q_1\subset T$ of $Q$ and 
deform $f^0=f$ by a homotopy which is fixed for 
$t\in Q\cup (T\setminus Q_1)$ to another map $\tilde f^0:Z\to Y$ 
which approximates $f^0$ in the fine topology on $K^0$
such that $\tilde f^0(t,\cdotp)$ is holomorphic on a neighbourhood 
of $K^1_t$ for every $t$ in a closed neighbourhood $\wt Q \subset Q_1$
of $Q$ and the other properties of $f^0$ remain in place. 
This modification can be done in a similar way as in the last part 
of the proof of Theorem \ref{th:OkaWeil} but using the gluing technique
in \cite[Proposition 5.13.1]{Forstneric2017E} instead of  
partitions of unity. To simplify the notation, we replace
$f^0$ by $\tilde f^0$ and drop the tilde. Define the set 
\begin{equation}\label{eq:wtK0}
	\wt K^0 := \big[(\wt Q\times X)\cap K^1\big] \cup K^0 
	\subset T\times X.
\end{equation}
Note that $K^0 \subset \wt K^0 \subset K^1$, $\wt K^0$ is proper
over $T$ and $\Jscr$-convex, and $f^0$ is $\Jscr$-holomorphic
on a neighbourhood of $\wt K^0$. If $Q=\varnothing$,
we take $\wt Q=\varnothing$ and $\wt K^0=K^0$. 

Fix a point $t_0\in T_1$. Since $T_2$ is compact, there are 
a smoothly bounded strongly $J_{t_0}$-pseudoconvex 
domain $\Omega\Subset X$ and $J_{t_0}$-Stein 
domains $V, V'$ in $X$ such that 
\begin{equation}\label{eq:incl1}
   	\bigcup_{t\in T_2}K^1_t \Subset V\Subset V' \Subset \Omega. 
\end{equation}
The conditions on $T$ imply that there is a neighbourhood 
$P''\subset T$ of $t_0$
which is an ENR, so we may assume that 
$P''\subset \R^N\subset\C^N$ is a neighbourhood retract. 
Theorem \ref{th:Hamilton} gives a compact neighbourhood
$P'\subset T_2$ of $t_0$, contained in the interior of $P''$,  
and a map $\Phi: P'\times \Omega\to X$ of class $\Cscr^{0,k}$ 
and of the form $\Phi(t,x)=(t,\Phi_t(x))$ such that
$\Phi_t : \Omega \to \Phi_t(\Omega) \subset X$
is a $(J_t,J_{t_0})$-biholomorphism for every $t\in P'$ 
and $\Phi_{t_0} = \operatorname{Id}_\Omega$. % (see \eqref{eq:Phit}).
Shrinking $P'$ around $t_0$ we may assume that for every 
$t\in P'$ we have
\begin{equation}\label{eq:incl2}
	U_t\subset\Omega,\quad 
	\Phi_t(\wt K^0_t) \subset V \subset V' \subset \Phi_t(\Omega),
	\quad 
	K^1_t\subset V \subset  \Phi_t^{-1}(V').
\end{equation}
(These are analogues of conditions \eqref{eq:inclusions}.)
Pick a compact neighbourhood $P\subset T$ of $t_0$, 
contained in the interior of $P'$, 
and consider the continuous family of maps 
\[
	f'_t :=  f_t\circ \Phi_t^{-1}: \Phi_t(\Omega) \to Y,\quad t\in P.
\]
Since the map $\Phi^{-1}_t: (\Phi_t(\Omega),J_{t_0}) \to (\Omega,J_t)$ 
is biholomorphic and $f_t$ is $J_t$-holomorphic on a neighbourhood
of $\wt K^0_t$, the map $f'_t$ is $J_{t_0}$-holomorphic on a 
neighbourhood of $\Phi_t(\wt K^0_t)$ for every $t\in P$
(see the second set of inclusions in \eqref{eq:incl2}). 
Pick a pair of smaller neighbourhoods $P_0\subset P_1\subset P$ of 
$t_0$, each contained in the interior of the next one. 
Lemma \ref{lem:main}, applied to the family of $J_{t_0}$-convex sets 
$\Phi_t(\wt K^0_t)$ in the $J_{t_0}$-Stein domain 
$V'\subset X$, gives a homotopy 
\[
	f'_{s,t}: V' \to Y\ \ \text{for $t\in P$ and $s\in I$} 
\]
such that $f'_{s,t}=f'_{0,t}=f'_t$ holds for $t\in P\setminus P_1$ and $s\in I$, 
the map $f'_{s,t}$ is $J_{t_0}$-holomorphic on a neighbourhood of
$\Phi_t(\wt K^0_t)$ and approximates $f'_t$ uniformly on 
$\Phi_t(\wt K^0_t)$ to arbitrary precision for all $t\in P$
and $s\in I$, and $f'_{1,t}$ is $J_{t_0}$-holomorphic on 
$V'$ for $t$ in a neighbourhood of $P_0$. 
By the third set of inclusions in \eqref{eq:incl2} 
we have that $\Phi_t(V)\subset V'$ for $t\in P$. It follows that the maps 
\begin{equation}\label{eq:ftb}
	f_{s,t}:=f'_{s,t} \circ \Phi_t: V \to Y\ \ 
	\text{for $s\in I$ and $t\in P$} 
\end{equation}
are $J_t$-holomorphic on a neighbourhood of $\wt K^0_t$,
$f_{s,t}$ approximates $f_t$ uniformly on $\wt K^0_t$ 
(and uniformly in $s\in I$) to arbitrary  precision, 
we have $f_{s,t}=f_{0,t}=f_t$ for $s\in I$ and $t\in P\setminus P_1$,
and the map $f^1_t:=f_{1,t}:V \to Y$ is 
$J_t$-holomorphic for all $t$ in a neighbourhood of $P_0$. 
We extend the family of homotopies to all $t\in T$ by setting 
$f_{s,t}=f_{0,t}=f_t$ for $t\in T\setminus P_1$ and $s\in I$.

Note that for $t\in P_1$ the map $f_{s,t}$ in \eqref{eq:ftb} is still 
defined only on $V\subset X$. In order to extend the 
homotopy to all of $X$ also for $t\in P_1$, choose a smooth 
cut-off function $\chi_1: X\to [0,1]$ such that $\chi_1=1$ 
in a neighbourhood of the compact set 
$\bigcup_{t\in P} K^1_t$ and $\supp \chi_1\subset V$. 
If the sets $Q\subset \wt Q$ are nonempty, 
we choose a second cut-off  function $\chi_2:T\to [0,1]$ such that 
$\chi_2=1$ on $T\setminus \wt Q$ and $\chi_2=0$ 
on a neighbourhood of $Q$. 
If $Q$ is empty we simply take $\chi_2=1$ on $T$. 
We can now extend the maps $f_{s,t}=f_s(t,\cdotp)$ 
in \eqref{eq:ftb} to all of $X$ without changing their values on a 
neighbourhood of $K^1$ by setting 
\[
	\tilde f_{s,t}(x) := f_{s\chi_1(x)\chi_2(t),t}(x) 
	\quad \text{for $t\in T$, $x\in X$, and $s\in I$}. 
\]	
For $t$ in a neighbourhood of $Q$ we have $\chi_2=0$ and hence 
$\tilde f_{s,t}=f_{0,t}=f_t$. 

Since $T_1$ is compact, we can find a finite family of triples
$P_0^j\subset P_1^j \subset P^j$ $(j=1,2,\ldots,m)$ of compact 
sets in $T$ such that $T_1\subset \bigcup_{j=1}^m P_0^j$ and
the construction described above can be made on each of these triples. 
The induction proceeds as follows. 
In the first step, we perform the procedure explained above on 
the first triple $(P^1_0,P^1_1,P^1)$ with the set $K^0$
and the map $g^0:=f^0=f$. We obtain a homotopy from 
$g^0$ to $g^1:T\times X \to Y$ 
such that every map in the homotopy is $\mathscr{J}$-holomorphic on
a neighbourhood of $K^0$, it approximates $g^0=f^0$ on $K^0$ to precision 
$\epsilon/2m$, and the homotopy is fixed for $t$ in a neighbourhood 
of $\overline {T\setminus P^1_1} \cup Q$.
The resulting map $g^1$ is $\mathscr{J}$-holomorphic 
on a neighbourhood of the compact $\Jscr$-convex set 
\[	
	S^1 := \big[(P^1_0\times X)\cap K^1\big] \cup  K^0 
	\subset T\times X. 
\]
Similary we define compact $\Jscr$-convex sets $S^\ell$ 
for $\ell=2,\dots, m$ by
\begin{equation*}
	S^\ell = \big[((P^1_0\cup\dots \cup  P^\ell_0)\times X)\cap K^1\big]
	\cup K^0  \subset T\times X.
\end{equation*} 
%(See Fig.\ \ref{fig:S2}.)

\begin{figure}[h]
  \centering
\includegraphics[scale=0.95]{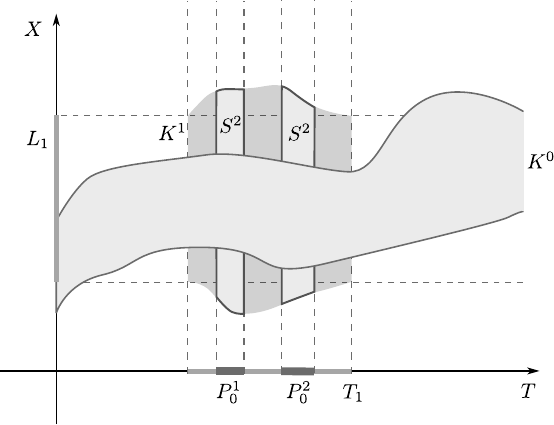}
  \caption{The set $S^2$. \label{fig:S2}}
\end{figure}

In step $\ell\in \{2,\dots, m\}$ the same argument 
is applied to the map $g^{\ell-1}$ 
on the triple $(P^\ell_0,P^\ell_1,P^\ell)$ with respect to the set $S^{\ell-1}$.
The resulting map $g^\ell: T\times X\to Y$ is $\mathscr{J}$-holomorphic 
on a neighbourhood of $S^\ell$. We also obtain a homotopy 
from $g^{\ell-1}$ to $g^\ell$ consisting of maps which are 
$\mathscr{J}$-holomorphic on a neighbourhood of $S^{\ell-1}$, they 
approximate $g^{\ell-1}$ on $S^{\ell-1}$ to precision 
$\epsilon/2m$, and the homotopy is fixed for $t$ in a neighbourhood of 
$\overline{T\setminus P^\ell_1} \cup Q$.
After $m$ steps we obtain a map $g^m:T\times X\to Y$ which is 
$\mathscr{J}$-holomorphic on a neighbourhood
of $S^m$, which contains $K^1$ (see \eqref{eq:Kj}). We define $f^1:=g^m$.
Furthermore, the homotopies between the subsequent maps 
$g^{\ell}$ and $g^{\ell+1}$ for $\ell=0,1,\ldots,m-1$ can be assembled 
into a homotopy $f^1_s$ $(s\in I)$ from the initial map $f^1_0 = f^0=g^0$ 
to $f^1_1=f^1=g^m$ such that $f^1_s$ satisfies (i)--(v) for $j=1$. 
This explains the inductive step and thereby concludes the proof.
\end{proof}

%
%  OKA-WEIL FOR FIBREWISE HOLOMORPHIC VECTOR BUNDLES
%
\section{The Oka--Weil theorem for 
sections of fibrewise holomorphic vector bundles} 
\label{sec:OkaWeil}

%
%  Franc: 
% The following is taken from my preprint 
%  The nonhomogeneous Cauchy-Riemann equation 
%                  on families of open Riemann surfaces
%  https://arxiv.org/abs/2508.13660
% 

In this section, we assume that $T$ is a 
locally compact and paracompact Hausdorff space,
$X$ is a smooth manifold, and $\Jscr=\{J_t\}_{t\in T}$ 
is a tame family (see Def.\ \ref{def:tame}) of Stein structures on $X$ 
of class $\Cscr^{0,\infty}$. The main result of this section,
Theorem \ref{th:OkaWeilB}, is an Oka--Weil approximation
theorem for $\Jscr$-holomorphic %(also called fibrewise holomorphic) 
sections of $\Jscr$-holomorphic vector bundles on $Z=T\times X$. 
It generalises Theorem \ref{th:OkaWeil}, which pertains to sections 
of a trivial vector bundle. Theorem \ref{th:OkaWeilB} will
be used in the following section to obtain global solvability
of the $\dibar$-equation on tame families of Stein manifolds;
see Theorem \ref{th:dibarGlobal}.

\begin{comment}
We endow each fibre $Z_t=\{t\}\times X$ ($t\in T$) of $Z=T\times X$
with the complex structure $J_t$. 
Denote by $\Cscr^{0,\infty}=\Cscr^{0,\infty}_Z$ 
the sheaf of germs of continuous fibrewise smooth functions on $Z$,
and by $\Oscr=\Oscr_Z$ the sheaf of germs of 
$\Jscr$-holomorphic functions on $Z$.
These are sheaves of unital abelian rings, 
with $\Oscr_Z$ a subsheaf of $\Cscr^{0,\infty}_Z$. 
For a complex Lie group $G$, $\Oscr_Z^G\subset \Cscr_Z^{0,\infty,G}$ 
denote the sheaves of germs of maps $Z\to G$ of respective classes; 
these are sheaves of groups. 
\end{comment}

%
%  FIBREWISE HOLOMORPHIC FIBRE VECOTR BUNDLES
%
\begin{definition}\label{def:FHVB}
Let $T$, $X$ and $\Jscr$ be as above, and let $GL_r(\C)$
denote the complex Lie group of invertible $r\times r$ matrices.
A complex vector bundle $\pi:E \to Z=T\times X$
of rank $r$ is said to be $\Jscr$-holomorphic %(or fibrewise holomorphic)
if it is defined on an open covering $\{U_i\}_i$ of $Z$
by a 1-cocycle $g=(g_{i,j})$ consisting of $\Jscr$-holomorphic maps
$g_{i,j}:U_{i,j}\to GL_r(\C)$.
\end{definition}

Explicitly, $E$ is obtained by gluing the trivial 
bundles $U_i\times \C^r$ on the overlaps $U_{i,j}=U_i\cap U_j$
by identifying a point $(z,v)\in U_j\times \C^r$ 
(where $z = (t,x) \in U_{i,j}$)
with $(z,g_{i,j}(z)v)\in U_i\times \C^r$.

%
%   OKA-WEIL FOR SECTIONS OF VECTOR BUNDLES
%
\begin{theorem}
\label{th:OkaWeilB}
Let $T$, $X$ and $\Jscr$ be as above and 
$E\to Z=T\times X$ be a $\Jscr$-holomorphic vector bundle. 
Given a proper $\Jscr$-convex subset $K\subset Z$ (see \eqref{eq:whK2}), 
an open subset $U\subset Z$ containing $K$,  a closed subset $Q\subset T$,
a $\Jscr$-holomorphic section %$f:U\cup(Q\times X) \to E|(U\cup(Q\times X))$, 
$f$ of $E$ over $U\cup(Q\times X)$, and an integer $k\in\Z_+$, we can approximate
$f$ in the fine $\Cscr^{0,k}$ topology on $K$ by $\Jscr$-holomorphic
sections $F:Z\to E$ such that $F=f$ on $Q\times X$. 
\end{theorem}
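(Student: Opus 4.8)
The plan is to run the proof of Theorem \ref{th:OkaWeil} essentially verbatim; the only genuinely new ingredient is the passage from scalar functions to sections of the bundle $E$. Recall that the basic local step in the proof of Theorem \ref{th:OkaWeil} was: after straightening the family $\Jscr$ near a point $t_0\in T$ by Theorem \ref{th:Hamilton} to the constant structure $J_{t_0}$ on a strongly pseudoconvex domain $\Omega\Subset X$, one approximates the transported function on the transported (proper over the parameter, fibrewise $J_{t_0}$-convex) compact set $K'$ by a global fibrewise $J_{t_0}$-holomorphic function on a neighbourhood $T_0\times V'$, using \cite[Lemma 5.3]{Forstneric2024Runge}; one then pulls back by the straightening map and glues the finitely many local approximants by a partition of unity on $T$, which is legitimate because functions — and here sections of $E$ — add. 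Exhaustions of $Z$ by proper $\Jscr$-convex compact sets, the reduction to $T$ compact, and the treatment of the closed set $Q$ all carry over word for word (replacing ``$\C$-valued function'' by ``section of $E$'' throughout, and replacing the parametric Oka--Weil theorem \cite[Theorem 2.8.4]{Forstneric2017E} used near $Q$ by its analogue for sections of a holomorphic vector bundle over a Stein manifold, which itself reduces to the scalar case by the embedding trick below). Thus it suffices to establish the $E$-section analogue of the basic local step.

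First I would set up the local step as in the proof of Theorem \ref{th:OkaWeil}: fix $t_0\in T$, choose a smoothly bounded strongly $J_{t_0}$-pseudoconvex $\Omega\Subset X$ and $J_{t_0}$-Stein domains $V\Subset V'\Subset\Omega$, and apply Theorem \ref{th:Hamilton} to obtain a compact neighbourhood $T_0\subset T$ of $t_0$ and a continuous family of $(J_t,J_{t_0})$-biholomorphisms $\Phi_t:\Omega\to\Phi_t(\Omega)\subset X$ with $\Phi_{t_0}=\Id_\Omega$ and the inclusions \eqref{eq:inclusions} in force. Transporting $E$ and $f$ by $(t,x)\mapsto(t,\Phi_t(x))$, the bundle $E$ near the transported piece of $L'$ becomes a vector bundle $E'$ over a neighbourhood of $\{(t,\Phi_t(L'_t)):t\in T_0\}$ in $T_0\times V'$ whose transition maps are continuous in $t$ and $J_{t_0}$-holomorphic in the $V'$-variable, and $f$ becomes a section $f'$ of $E'$ that is fibrewise $J_{t_0}$-holomorphic near the proper, fibrewise $J_{t_0}$-convex compact set $K'=\{(t,\Phi_t(K_t)):t\in T_0\}$. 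Shrinking $T_0$, I would arrange that $\{\Phi_t(L'_t):t\in T_0\}\subset T_0\times V''$ for a $J_{t_0}$-convex Stein domain $V''\Subset V'$, and that $E'$ admits over $T_0\times V''$ a fibrewise $J_{t_0}$-holomorphic embedding $\iota_t:E'_t\hookrightarrow V''\times\C^N$ onto a holomorphic subbundle together with a fibrewise $J_{t_0}$-holomorphic left inverse $\pi_t:V''\times\C^N\to E'_t$ (so $\pi_t\circ\iota_t=\Id$), both continuous in $t\in T_0$.

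Granting this embedding, regard $\iota\circ f'$ as a continuous family of $\C^N$-valued functions, fibrewise $J_{t_0}$-holomorphic near $K'$; applying \cite[Lemma 5.3]{Forstneric2024Runge} to each of its $N$ components yields a fibrewise $J_{t_0}$-holomorphic $G:T_0\times V''\to\C^N$ with $G$ arbitrarily close to $\iota\circ f'$ on $K'$ in the fine $\Cscr^{0,k}$ topology. Then $F':=\pi\circ G$ is a global fibrewise $J_{t_0}$-holomorphic section of $E'$ over $T_0\times V''$ with $F'\approx\pi\circ\iota\circ f'=f'$ on $K'$. Pulling $F'$ back by $(t,x)\mapsto(t,\Phi_t^{-1}(x))$, which is legitimate on $T_0\times V$ by \eqref{eq:inclusions}, produces a $\Jscr$-holomorphic section of $E$ over $T_0\times V$ approximating $f$ on $K$ over $T_0$ in the fine $\Cscr^{0,k}$ topology, with $L'\cap(T_0\times X)\subset T_0\times V$. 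Covering $T$ by finitely many such $T_0$'s (for $T$ compact) and gluing with a partition of unity on $T$ gives the local-to-global step; an induction over an exhaustion of $Z$ by proper $\Jscr$-convex compact sets then finishes the proof, exactly as for Theorem \ref{th:OkaWeil}.

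The hard part is the existence of the parametrised holomorphic embedding $\iota_t$, equivalently a parametric version of Cartan's Theorems A and B (or of Grauert's Oka principle) over the Stein manifold $V'$: one must produce, \emph{continuously in the parameter} $t\in T_0$, finitely many global $J_{t_0}$-holomorphic sections of $(E'_t)^\ast$ over $V'$ spanning each fibre over $\overline{V''}$. For the single value $t_0$ this is classical — every holomorphic vector bundle over a Stein manifold is a holomorphic direct summand of a trivial one. For the family one shrinks $T_0$ so that the underlying topological bundle over $T_0\times V''$ is pulled back from the slice $\{t_0\}\times V''$, hence topologically admits such an embedding, and then deforms the topological embedding to a holomorphic one with continuous dependence on $t$ by the parametric Oka principle for sections of holomorphic bundles with Oka fibre over a Stein base (the relevant fibre, the complex manifold of injective linear maps $\C^r\to\C^N$, is complex homogeneous, hence Oka); see \cite[Chapter 5]{Forstneric2017E}. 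With this one new ingredient in hand, every remaining step is a transcription of the proof of Theorem \ref{th:OkaWeil} with ``function'' replaced by ``section of $E$''.
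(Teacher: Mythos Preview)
Your overall strategy matches the paper's: reduce to $T$ compact and $Q=\varnothing$, straighten $\Jscr$ near each $t_0$ via Theorem \ref{th:Hamilton}, solve the approximation problem over $T_0\times V'$ with the constant structure $J_{t_0}$, pull back, and glue by a partition of unity on $T$. Where your argument diverges from the paper is precisely at the point you flag as the hard part.

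The paper does not try to embed the varying bundles $E'_t$ into a trivial bundle. Instead, after forming the push-forward bundle $E'=\Phi_*(E|(T_0\times\Omega))$, it invokes Leiterer's stability theorem \cite[Theorem 2.7]{Leiterer1990}: for $T_0$ small enough there is a continuous family of $J_{t_0}$-holomorphic vector bundle isomorphisms $\Psi_t:E'_t\stackrel{\cong}{\to}E'_{t_0}$ over a strongly pseudoconvex domain $\Omega_2$ (the relevant cohomological hypotheses being automatic on such domains, cf.\ \cite[Remark 2.11 and Theorem 2.12]{Leiterer1990}). This reduces the problem to approximating a continuous family of holomorphic sections of the \emph{fixed} bundle $E'_{t_0}$ over a Stein domain, where the standard parametric Oka--Weil theorem \cite[Theorem 2.8.4]{Forstneric2017E} applies directly.

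Your embedding argument has a gap at exactly this place. The parametric Oka principle in \cite[Chapter 5]{Forstneric2017E} is for sections of a \emph{fixed} holomorphic fibration over a Stein base, with an external parameter space. In your situation the holomorphic fibration $\mathrm{Hom}_{\mathrm{inj}}(E'_t,V''\times\C^N)\to V''$ itself varies with $t$, so the cited result does not apply as stated; an Oka principle for sections of varying $\Jscr$-holomorphic bundles is in fact posed as an open problem later in this paper (Problem \ref{prob:fibrebundles}). To close the gap you would first need to show that the family $E'_t$ is, after shrinking $T_0$, \emph{holomorphically} (not just topologically) isomorphic to the constant family $E'_{t_0}$ with continuous dependence on $t$ --- but that is precisely Leiterer's theorem. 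Once you have it, your embedding trick becomes superfluous: you are already in the fixed-bundle setting and can appeal to \cite[Theorem 2.8.4]{Forstneric2017E} directly, as the paper does.
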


\begin{proof}
We shall assume that $T$ is compact and  $Q=\varnothing$;
the general case can be dealt with as in the proof of 
Theorem \ref{th:OkaWeil}, which pertains to sections of trivial bundles. 

Since $K$ is proper over the compact set $T$, it is compact.
It suffices to show that, given a $\Jscr$-convex subset
$L\subset Z$ with $K\subset L$, we can approximate $f$ in the  
$\Cscr^{0,k}$ topology on $K$ by $\Jscr$-holomorphic
sections of $E$ over a neighbourhood of $L$. The conclusion of the 
theorem then follows by induction with respect to an increasing sequence 
of $\Jscr$-convex subsets exhausting $Z$. 
Furthermore, since the problem is linear,
we may use partitions of unity on $T$.
This reduces the proof to the approximation problem for parameter 
values $t$ in a neighbourhood of a given point $t_0\in T$.

Fix $t_0\in T$ and smoothly bounded strongly 
$J_{t_0}$-pseudoconvex domains 
$\Omega_i\Subset X$ for $i=1,2,3$ such that 
$
	L_{t_0}\subset \Omega_1 \Subset \Omega_2\Subset \Omega_3.
$  
Theorem \ref{th:Hamilton} gives a compact neighbourhood $T_0\subset T$
of $t_0$ and a map $\Phi: T_0\times \Omega_3\to T_0\times X$ 
of class $\Cscr^{0,\infty}$ such that $\Phi(t,x)=(t,\Phi_t(x))$ and
$\Phi_t : \Omega_3 \to \Phi_t(\Omega_3) \subset X$ 
is a $(J_t,J_{t_0})$-biholomorphism for every $t\in T_0$, 
with $\Phi_{t_0} = \operatorname{Id}_\Omega$. 
Shrinking $T_0$ around $t_0$, we may assume that 
the following inclusions hold for every $t\in T_0$:
\begin{equation}\label{eq:inclusions2}
	L_t \subset \Omega_1 \subset \Phi_t^{-1}(\Omega_2),
	\qquad \Omega_2 \Subset \Phi_t(\Omega_3).
\end{equation}
Since the map $\Phi_t$ is $(J_t,J_{t_0})$-biholomorphic, the push-forward
vector bundle $\Phi_*(E|(T_0\times \Omega))$ is continuous in $t\in T_0$
and fibrewise $J_{t_0}$-holomorphic. Denote by $E'$ the restriction 
of this bundle to the domain 
$T_0\times \overline\Omega_2 \subset \Phi(T_0\times \Omega_3)$
(see \eqref{eq:inclusions2}). 
Its restriction $E'_t$ to the fibre $\{t\}\times \overline \Omega_2$
is a $J_{t_0}$-holomorphic vector bundle, smooth up to the 
boundary of $\Omega_2$ and depending continuously on $t\in T_0$.
Assuming as we may that $T_0$ is chosen small enough,
the stability result of Leiterer \cite[Theorem 2.7]{Leiterer1990} 
gives a family of $J_{t_0}$-holomorphic vector bundle isomorphisms 
over $\Omega_2$,
\begin{equation}\label{eq:Psit}
	\Psi_t: E'_t \stackrel{\cong}{\longrightarrow} E'_{t_0},\quad t\in T_0,
\end{equation}
smooth up to the boundary and depending continuously on $t\in T_0$. 
(The cited result is stated in terms of $J_{t_0}$-holomorphic transition 
cocycles $g^t=\{g^t_{i,j}\}$ for $E'_t$ for $t\in T_0$,
defined on a fixed open cover of $\overline \Omega_2$ and continuous
up to the boundary of the respective domains, and there are 
cohomological condition (i), (ii) on the endomorphism bundle
$\mathrm{Ad}(E_{t_0})$ of $E_{t_0}$. As explained in 
\cite[Remark 2.11]{Leiterer1990} and 
\cite[proof of Theorem 2.12]{Leiterer1990},
the two cohomology groups appearing in the hypothesis of 
\cite[Theorem 2.7]{Leiterer1990} vanish when the base is a 
compact strongly pseudoconvex domain with $\Cscr^2$ boundary.) 
The upshot is that the bundle $E'\to T_0\times \overline \Omega_2$ is 
fibrewise isomorphic to the trivial (independent of $t$) extension 
of the vector bundle $E'_{t_0}:=E'| (\{t_0\}\times \overline \Omega_2)$. 
%In other words, we can get rid of the $t$-dependence in the defining 1-cocycle of the bundle $E'$. 

Denote by $E_t$ the restriction of the initial vector bundle $E\to Z$ to the 
fibre over $t\in T$. We are given a continuous family of $J_t$-holomorphic 
sections $f_t:U_t\to E_t|U_t$, $t\in T$. For every $t\in T_0$, the map 
$\tilde f_t:=f_t\circ\Phi_t^{-1}$ is a $J_{t_0}$-holomorphic section of
the push-forward bundle $E'_t =(\Phi_t)_* E_t$ over the domain 
$\Phi_t(U_t)$, depending continuously on $t\in T_0$. By using the 
isomorphisms $\Psi_t$ in \eqref{eq:Psit}, we may consider
$\{\tilde f_t\}_{t\in T_0}$
as a family of $J_{t_0}$-holomorphic sections of $E'_{t_0}$
over the family of domains $\Phi_t(U_t)\supset \Phi_t(K_t)$.
(Here, $K_t$ is the fibre of the set $K$ in the theorem.)
Note that for every $t\in T_0$ the set $\Phi_t(K_t)$ is 
$J_{t_0}$-convex in $\Phi_t(\Omega_3)$, hence in 
$\Omega_2$; furthermore  $\Phi(K\cap (T_0\times X))$
is proper over $T_0$. 

By the parametric Oka--Weil theorem 
for sections of holomorphic vector bundles over Stein manifolds
(see \cite[Theorem 2.8.4]{Forstneric2017E}), we can approximate
$\tilde f_t$ in the $\Cscr^k$ topology on $\Phi_t(K_t)$, uniformly
in $t\in T_0$, by $J_{t_0}$-holomorphic sections $F'_t$ of 
the bundle $E'_{t_0}$ over the Stein domain $\Omega_2$.
(Approximation on variable fibres $\Phi_t(K_t)$ is reduced
to the case of constant fibres by the same technique
as in the proof of Theorem \ref{th:OkaWeil}, using a 
continuous partition of unity on $T_0$.)
Applying again the vector bundle isomorphisms \eqref{eq:Psit},
we may consider $F'_t$ as a $J_{t_0}$-holomorphic section of 
the bundle $E'_t$ over $\Omega_2$. Finally, $F_t:=F'_t\circ\Phi_t$ 
is a $J_t$-holomorphic section of the original bundle $E_t$ 
over the domain $\Phi_t^{-1}(\Omega_2)$, and these sections
depend continuously on $t\in T_0$. By \eqref{eq:inclusions2}
we have $L_t \subset \Omega_1\subset \Phi_t^{-1}(\Omega_2)$ 
for all $t\in T_0$, so $F(t,x)=(t,F_t(x))$ is a $\Jscr$-holomorphic
section of $E|(T_0\times \Omega_1)$ which 
approximates $f$ on $K\cap (T_0\times X)$. Note that 
$L\cap (T_0\times X) \subset T_0\times \Omega_1$.

Since $T$ is compact, this gives a finite open cover 
$\{W_j=T_j\times \Omega_j\}_j$ of $L$ such that $\Tcal=\{T_j\}_j$ 
is an open cover of $T$, and $\Jscr$-holomorphic sections $F_j$ of 
$E|W_j$ approximating $f$ in the $\Cscr^{0,k}$ topology on 
$K\cap W_j$ as closely as desired for every $j$. 
If $\{\chi_j\}_j$ is a continuous partition 
of unity on $T$ subordinate to $\Tcal$ then $F=\sum_j \chi_j F_j$ is a 
$\Jscr$-holomorphic section of $E$ over a neighbourhood 
of $L$ in $Z$ approximating $f$ in the fine $\Cscr^{0,k}$
topology on $K$. As explained at the beginning, 
this concludes the proof.
\end{proof}

%
%	THE OKA PRINCIPLE FOR VECTOR BUNDLES
%
\section{Global solution of the $\dibar$-equation on tame families 
of Stein manifolds}
\label{sec:dibar}

In this section, we assume that $T$ is a locally compact 
and paracompact Hausdorff space,
$X$ is a smooth manifold, and $\Jscr=\{J_t\}_{t\in T}$ 
is a tame family of Stein structures on $X$ (see Def.\ \ref{def:tame})
of class $\Cscr^{0,\infty}$. Write $Z=T\times X$.
Every fibre $Z_t=\{t\}\times X\cong X$ 
is endowed with the Stein structure $J_t$.
For each pair of integers $p\ge 0,\ q\ge 0$ we denote by 
$\Dscr^{p,q}(Z)$ the space of $(p,q)$-forms on the 
fibres $(X,J_t)$ of $Z$ of class $\Cscr^{0,\infty}$.
The following is a corollary to Theorems \ref{th:dibar} and \ref{th:OkaWeilB}.
A related result for $p=0,\ q=1$ on families of open Riemann surfaces 
%(the only case of interest in that situation) 
is \cite[Corollary 1.2]{Forstneric2025dibar}.

%
%  COROLLARY TO THE MAIN THEOREM
%
\begin{theorem}\label{th:dibarGlobal}
Let $p\ge 0$ and $q\ge 1$. Given a %continuous 
family $\alpha=\{\alpha_t\}_{t\in T} \in \Dscr^{p,q}(Z)$ of 
smooth $(p,q)$-forms $\alpha_t\in \Dscr^{p,q}(X,J_t)$
with $\dibar_{J_t} \alpha_t =0$ for all $t\in T$, 
there exists $\beta=\{\beta_t\}_{t\in T} \in \Dscr^{p,q-1}(Z)$ 
satisfying
\begin{equation}\label{eq:dibar1}
	\dibar_{J_t} \beta_t = \alpha_t \ \ 
	\text{on $X$ for every $t\in T$.}
\end{equation}
\end{theorem}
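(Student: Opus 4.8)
The plan is to imitate the classical exhaustion proof of solvability of $\dibar$ on a Stein manifold (equivalently, of $H^q(X,\Omega^p)=0$ for $q\ge 1$), performing every step continuously in $t$ and using the two cited theorems as the only analytic inputs. The point that makes the parameter harmless is that the equation $\dibar_{J_t}\beta_t=\alpha_t$ is affine in $\beta$ and $\dibar_{J_t}$ commutes with multiplication by functions of $t$ alone: if $\{\chi_i\}$ is a partition of unity on $T$ subordinate to an open cover $\{T_i\}$ and each $\beta^{(i)}$ solves the equation over $T_i\times\Omega_i$ with $\Omega_i\subset X$ open, then $\sum_i\chi_i(t)\beta^{(i)}_t$ solves it over the open set $\{(t,x)\in Z:\ x\in\Omega_i\ \text{for every }i\ \text{with }t\in\supp\chi_i\}$. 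Using this together with one further partition of unity over a locally finite relatively compact cover of $T$, it suffices to treat the case when $T$ is compact.

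The heart of the matter is a \emph{local} solvability statement: given $t_0\in T$, a compact $J_{t_0}$-convex set $S\subset X$, an open $W\supset S$, and a continuous family $\{\alpha_t\}$ of smooth $\dibar_{J_t}$-closed $(p,q)$-forms on $W$ (for $t$ near $t_0$, $q\ge1$), there are a neighbourhood $T_0\ni t_0$, a fixed open set $\Omega'$ with $S\subset\Omega'\Subset W$, and a continuous family $\{\beta_t\}_{t\in T_0}$ of smooth $J_t$-forms on $\Omega'$ with $\dibar_{J_t}\beta_t=\alpha_t$. First I would pick smoothly bounded, relatively compact, strongly $J_{t_0}$-pseudoconvex domains with Stein interiors $\Omega'\Subset\Omega''\Subset\Omega$ with $S\subset\Omega'$ and $\overline\Omega\subset W$, and invoke Theorem~\ref{th:Hamilton} (with $bD$ and $\Jscr$ smooth) to get a $\Cscr^{0,\infty}$ family of $(J_t,J_{t_0})$-biholomorphisms $\Phi_t\colon\Omega\to\Phi_t(\Omega)\subset X$ with $\Phi_{t_0}=\mathrm{Id}$, so that (after shrinking $T_0$) $\Omega'\subset\Phi_t^{-1}(\Omega'')$ and $\overline{\Omega''}\subset\Phi_t(\Omega)$ for $t\in T_0$. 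Then the pushed-forward forms $\widetilde\alpha_t:=(\Phi_t^{-1})^*\alpha_t$ are $\dibar_{J_{t_0}}$-closed, smooth on $\overline{\Omega''}$, and continuous in $t$; applying Theorem~\ref{th:dibar} to the fixed domain $\Omega''$ and the \emph{constant} family $J_{t_0}$ — parts (a) and (e), which in this case just furnish the Kohn solution operator $K_{t_0}$ turning a continuous family of smooth data into smooth solutions continuous in $t$ — yields $\widetilde\beta_t$ with $\dibar_{J_{t_0}}\widetilde\beta_t=\widetilde\alpha_t$, and $\beta_t:=\Phi_t^*\widetilde\beta_t$ solves the equation on $\Omega'$.

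Now assume $T$ compact. By tameness and Theorem~\ref{th:tame2}, fix $\rho\in\Cscr^{0,\infty}(Z)$ with strongly $J_t$-plurisubharmonic exhaustive fibres $\rho_t$, and set $L_\ell=\{\rho\le\ell\}$: these are compact, proper over $T$, $\Jscr$-convex, and exhaust $Z$. Applying the local statement with $S=(L_\ell)_{t_0}$ over a finite subcover of $T$ and patching by a partition of unity on $T$ gives, for each $\ell$, a continuous family $\{\beta^{(\ell)}_t\}$ of smooth $J_t$-forms on a neighbourhood of $L_\ell$ solving the equation. I would then run the standard inductive correction. The difference $\beta^{(\ell+1)}_t-\beta^{(\ell)}_t$ is $\dibar_{J_t}$-closed near $L_\ell$, continuously in $t$. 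If $q\ge2$, the local statement in bidegree $(p,q-1)$ (with $q-1\ge1$), plus a partition of unity on $T$, writes it as $\dibar_{J_t}\delta_t$ near $L_\ell$; choosing a smooth $\chi\colon Z\to[0,1]$ equal to $1$ near $L_\ell$ and compactly supported where $\delta$ is defined, and replacing $\beta^{(\ell+1)}$ by $\beta^{(\ell+1)}-\dibar(\chi\delta)$ (which still solves the equation, as $\dibar_{J_t}\dibar_{J_t}=0$) makes $\beta^{(\ell+1)}=\beta^{(\ell)}$ near $L_\ell$; the corrected sequence is eventually constant on every compact of $Z$ and its limit is the desired $\Cscr^{0,\infty}$ solution. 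If $q=1$, the difference is a $\Jscr$-holomorphic section, over a neighbourhood of $L_\ell$, of the bundle $\Lambda^{p,0}\to Z$ of fibrewise $(p,0)$-forms, which is a $\Jscr$-holomorphic vector bundle (Definition~\ref{def:FHVB}: the pullback under the Hamilton maps $\Phi_t$ of a $J_{t_0}$-holomorphic frame is a $J_t$-holomorphic frame depending $\Cscr^{0,\infty}$ on $t$, whence the transition cocycle is $\Jscr$-holomorphic). Since $L_\ell$ is proper and $\Jscr$-convex, Theorem~\ref{th:OkaWeilB} lets me approximate this difference to precision $2^{-\ell}$ in the fibrewise $\Cscr^{\ell}$ norm on $L_{\ell-1}$ by a $\Jscr$-holomorphic section $h^{(\ell)}\colon Z\to\Lambda^{p,0}$; replacing $\beta^{(\ell+1)}$ by $\beta^{(\ell+1)}-h^{(\ell)}$ preserves the equation, and $\beta=\lim_\ell\beta^{(\ell)}$ then exists in $\Cscr^{0,\infty}$ and satisfies $\dibar_{J_t}\beta_t=\alpha_t$ (the right side being reached on any compact set once $\ell$ is large). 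Finally, for general paracompact $T$ one applies the compact case over the closures of a locally finite relatively compact cover of $T$ and glues with a partition of unity on $T$, using the linearity observation once more.

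The only genuinely new difficulty relative to the $t$-independent theory is that the complex structure, and with it the notions of strongly pseudoconvex domain and of $\dibar$-closed form and the $\dibar$-Neumann operator, all vary with $t$; this is precisely what is handled by freezing $J_t$ to $J_{t_0}$ via Hamilton's theorem on small domains, where Theorem~\ref{th:dibar} and Theorem~\ref{th:OkaWeilB} apply verbatim, after which the $t$-linearity of $\dibar$ lets partitions of unity in $t$ carry out the global gluing. I therefore expect the main care to go not into any single estimate but into the bookkeeping of neighbourhoods — making sure every patched family is defined on a genuine neighbourhood of $L_\ell$ (using upper semicontinuity of the fibres of $L_\ell$) and that the corrections do not shrink those neighbourhoods away.
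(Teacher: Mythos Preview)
Your proof is correct and follows the same overall architecture as the paper's: local solvability on neighbourhoods of proper $\Jscr$-convex sets, exhaustion of $Z$ by such sets, and the standard inductive correction (approximation via Theorem~\ref{th:OkaWeilB} for sections of $\Lambda^{p,0}$ when $q=1$, cutoff of a primitive when $q\ge 2$).

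The one substantive difference is in the local solvability step. You invoke Hamilton's theorem to straighten $J_t$ to a fixed $J_{t_0}$ via $\Phi_t$, then apply the Kohn operator for the constant structure $J_{t_0}$ on a fixed domain. The paper instead applies Theorem~\ref{th:dibar}(e) directly to the \emph{varying} family $\{J_t\}_{t\in T_0}$ on a single domain $D\Subset X$: by tameness and Lemma~\ref{lem:spsh} one can arrange that $D$ is strongly $J_t$-pseudoconvex and contains $K_t$ for all $t$ in a neighbourhood $T_0$ of $t_0$, and then Theorem~\ref{th:dibar}(e) already delivers a $\Cscr^{0,\infty}$ family of solutions on $T_0\times D$ without any change of coordinates. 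This is shorter, since the continuous dependence of Kohn's solution on the complex structure is precisely the content of the Greene--Krantz theorem; your detour through Hamilton is valid but unnecessary at this point in the argument.
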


In the sequel, we shall often write the equation \eqref{eq:dibar1}
in the form $\dibar_{\!\!\Jscr}\beta=\alpha$ on $Z=T\times X$. 

\begin{proof}
We begin by showing that the equation \eqref{eq:dibar1} is solvable
on a neighbourhood of any proper $\Jscr$-convex subset 
$K\subset Z$ (see \eqref{eq:whK2}).

Denote by $K_t$ the fibre of $K$ over $t\in T$.
Fix a point $t_0\in {T}$ and a smoothly bounded
strongly $J_{t_0}$-pseudoconvex neighbourhood $D \Subset X$ 
of $K_{t_0}$. By tameness of $\Jscr$ and
Lemma \ref{lem:spsh}, there is a neighbourhood 
$T_0\subset T$ of $t_0$ such that $D$ is a strongly 
$J_{t}$-pseudoconvex neighbourhood of $K_{t}$ for all $t\in T_0$. 
% Franc   removed Global on the next line
By Theorem \ref{th:dibar} (e), there exists 
$\beta \in \Dscr^{p,q-1}(T_0\times D)$ solving 
$\dibar_{\!\!\Jscr} \beta = \alpha$ on $T_0\times D$. 
In this way, we obtain an open locally finite cover 
$\Tcal=\{T_i\}_{i\in I}$ of $T$, smoothly bounded domains $D_i\Subset X$
such that $K \subset \bigcup_{i\in I} T_i\times D_i$, 
and solutions $\beta_i \in \Dscr^{p,q-1}(T_i\times D_i)$
to $\dibar_{\!\!\Jscr} \beta_i= \alpha$ on $T_i\times D_i$.
Let $\{\chi_i\}_{i\in I}$ be a partition of unity on $T$ 
subordinate to $\Tcal$. Then, 
$\beta=\sum_{i\in I}\chi_i \beta_{i}$ is a solution 
to \eqref{eq:dibar1} in a neighbourhood of $K$.

Choose an exhaustion $K^1\subset K^2\subset \cdots$ of $Z$ 
by proper $\Jscr$-convex sets (see \eqref{eq:whK2}). 
We shall inductively find solutions $\beta^j$ to \eqref{eq:dibar1} 
in neighbourhoods of $K^j$ such that $\beta^j$ approximates 
the solution $\beta^{j-1}$ from the previous
step on a neighbourhood of $K^{j-1}$ (if $q=1$), or agrees 
with it (if $q>1$).

Denote by $\Omega^{p}$ the sheaf of germs of 
$\Jscr$-holomorphic $(p,0)$-forms on the fibres of $Z=T\times X$. 
In particular, $\Omega^0=\Oscr$ is the sheaf
of germs of $\Jscr$-holomorphic functions on $Z$.
Since the complex structures $J_t\in \Jscr$ are smoothly compatible,
$\Omega^p$ is a subsheaf of the sheaf $\Escr^{p,0}$ of fibrewise 
smooth $(p,0)$-forms on the fibres of $Z$.
The elements $\beta\in \Dscr^{p,0}(Z)$ satisfying $\dibar_{\!\!\Jscr} \beta=0$ 
are precisely the global sections of $\Omega^p$ over $Z$. Equivalently, 
they are holomorphic sections of the $\Jscr$-holomorphic 
vector bundle on $Z$ (see Definition \ref{def:FHVB})
whose restriction to $Z_t=(X,J_t)$ is $\Lambda^p \, T^{*(1,0)}(X,J_t)$, 
the $p$-th exterior power of 
the $(1,0)$-cotangent bundle of $(X,J_t)$
(see Section \ref{sec:almostcomplex}). 

Let $\beta^j$ be as above, solving $\dibar_{\!\!\Jscr}\beta^j=\alpha$
on a neighbourhood of $K^j$ for $j=1,2,\ldots$. Then, 
\begin{equation}\label{eq:difference}
	 \dibar_{\!\!\Jscr} (\beta^j-\beta^{j-1})=0
	 \ \ \text{holds on a neighbourhood $U$ of $K^{j-1}$}. 
\end{equation}
If $q=1$, this means that $\beta^j-\beta^{j-1}$ is a 
section of the sheaf $\Omega^p$ on $U$.
By Theorem \ref{th:OkaWeilB}, we can approximate
it in the fine $\Cscr^{0,j}$ topology on $K^{j-1}$ by a global
section $\gamma$ of $\Omega^p$. Replacing 
$\beta^j$ by $\beta^j-\gamma$ ensures that $\beta^j$
solves $\dibar_{\!\!\Jscr}\beta^j=\alpha$
on a neighbourhood of $K^j$ and it approximates $\beta^{j-1}$ 
on $K^{j-1}$. Performing this construction inductively gives a  
sequence $\beta^j$ converging in the fine $\Cscr^{0,\infty}$-topology
to a solution $\beta\in \Dscr^{p,0}(Z)$ of 
the equation \eqref{eq:dibar1}.

Assume now that $q>1$. As explained earlier, 
\eqref{eq:difference} implies that 
$\beta^j-\beta^{j-1}=\dibar_{\!\!\Jscr}\gamma$
on a neighbourhood $U$ of $K^{j-1}$ for some
$\gamma\in \Dscr^{p,q-2}(U)$. Let $\chi:Z\to[0,1]$
be a function of class $\Cscr^{0,\infty}$ with $\supp(\chi)\subset U$ 
which equals $1$ in a neighbourhood of $K^{j-1}$.
Replacing $\beta^j$ by $\beta^j-\dibar_{\!\!\Jscr}(\chi \gamma)$
gives a solution to $\dibar_{\!\!\Jscr}\beta^j=\alpha$
in a neighbourhood of $K^{j}$ such that $\beta^j=\beta^{j-1}$
holds in a neighbourhood of $K^{j-1}$. Hence, the 
sequence $\beta^j$ is stationary and hence 
converges to a global solution of \eqref{eq:dibar1}.
\end{proof}

%
%  Dolbeault cohomology vanishes
%
\begin{theorem}\label{th:Dolbeault}
(Assumptions as above.) %We have that 
$H^q(Z,\Omega^p)=0$ for all $q=1,2,\ldots$.
\end{theorem}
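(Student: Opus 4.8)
The plan is to realise $\Omega^p$ as the kernel of $\dibar_{\!\!\Jscr}$ on a fine resolution by fibrewise smooth forms, and then invoke the abstract de Rham theorem to reduce the computation of $H^q(Z,\Omega^p)$ to Theorem \ref{th:dibarGlobal}. Denote by $\Escr^{p,q}$ the sheaf on $Z=T\times X$ of germs of continuous fibrewise smooth $(p,q)$-forms, so that $\Dscr^{p,q}(Z)=\Gamma(Z,\Escr^{p,q})$ and $\Escr^{p,0}$ is the sheaf occurring in Section \ref{sec:dibar}. I would consider the complex of sheaves
\begin{equation}\label{eq:Dolbeaultres}
	0 \longrightarrow \Omega^p \longrightarrow \Escr^{p,0}
	\xrightarrow{\ \dibar_{\!\!\Jscr}\ } \Escr^{p,1}
	\xrightarrow{\ \dibar_{\!\!\Jscr}\ } \cdots
	\xrightarrow{\ \dibar_{\!\!\Jscr}\ } \Escr^{p,n} \longrightarrow 0,
\end{equation}
in which $\Omega^p\hra\Escr^{p,0}$ is the natural inclusion recorded in Section \ref{sec:dibar} and $n=\dim_\C(X,J_t)$. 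Since a germ $\beta$ of a fibrewise smooth $(p,0)$-form satisfies $\dibar_{\!\!\Jscr}\beta=0$ precisely when each $\beta_t$ is $J_t$-holomorphic, the sequence \eqref{eq:Dolbeaultres} is exact at $\Escr^{p,0}$.

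The first substantive step is exactness of \eqref{eq:Dolbeaultres} at $\Escr^{p,q}$ for $q\ge1$, i.e.\ a parametrised fibrewise $\dibar$-Poincar\'e lemma. Fix $(t_0,x_0)\in Z$ and a germ $\alpha$ in $\Escr^{p,q}$ at $(t_0,x_0)$ with $\dibar_{\!\!\Jscr}\alpha=0$. Choose a smoothly bounded, strongly $J_{t_0}$-pseudoconvex neighbourhood $D\Subset X$ of $x_0$; by Lemma \ref{lem:spsh} there is a neighbourhood $T_0\subset T$ of $t_0$ such that $D$ is strongly $J_t$-pseudoconvex with Stein interior for every $t\in T_0$, and, after shrinking $T_0$, a representative of $\alpha$ is defined on $T_0\times\overline D$. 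Theorem \ref{th:dibar}(e), applied with $s=\infty$ and using that $\Jscr$ is of class $\Cscr^{0,\infty}$ and $bD$ is smooth, then yields $\beta\in\Dscr^{p,q-1}(T_0\times D)$ with $\dibar_{\!\!\Jscr}\beta=\alpha$ on the neighbourhood $T_0\times D$ of $(t_0,x_0)$. This is exactly the local step already carried out at the beginning of the proof of Theorem \ref{th:dibarGlobal}, so \eqref{eq:Dolbeaultres} is a resolution of $\Omega^p$. Furthermore each $\Escr^{p,q}$ is a fine sheaf: it is a module over the sheaf of continuous fibrewise smooth functions on $Z$, which admits partitions of unity subordinate to any locally finite open cover of $Z=T\times X$, obtained, just as in the proofs of Theorems \ref{th:OkaWeil}, \ref{th:OkaWeilB} and \ref{th:dibarGlobal}, by multiplying a continuous partition of unity on $T$ by smooth bump functions on $X$. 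Since $Z$ is paracompact (as the product of the paracompact, locally compact Hausdorff space $T$ with the $\sigma$-compact manifold $X$), the sheaves $\Escr^{p,q}$ are acyclic.

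The proof then concludes via the abstract de Rham theorem: for the fine resolution \eqref{eq:Dolbeaultres} of $\Omega^p$ on the paracompact space $Z$ one has
\[
	H^q(Z,\Omega^p)\;\cong\;
	\frac{\ker\bigl(\dibar_{\!\!\Jscr}\colon\Dscr^{p,q}(Z)\to\Dscr^{p,q+1}(Z)\bigr)}
	{\operatorname{im}\bigl(\dibar_{\!\!\Jscr}\colon\Dscr^{p,q-1}(Z)\to\Dscr^{p,q}(Z)\bigr)}
	\qquad\text{for }q\ge1,
\]
and the right-hand side is zero by Theorem \ref{th:dibarGlobal}. Hence $H^q(Z,\Omega^p)=0$ for all $q\ge1$, as asserted. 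The only place where genuine analysis is used is the parametrised local solvability of the $\dibar$-equation needed for the exactness of \eqref{eq:Dolbeaultres}; this is immediate from Theorem \ref{th:dibar}(e) and Lemma \ref{lem:spsh}, and the remainder is the standard Dolbeault--de Rham formalism.
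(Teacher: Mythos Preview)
Your proof is correct and follows essentially the same route as the paper: both set up the fibrewise Dolbeault resolution \eqref{eq:Dolbeaultres} by fine sheaves and invoke the abstract de Rham theorem together with Theorem \ref{th:dibarGlobal}. The only difference is cosmetic: you spell out the parametrised $\dibar$-Poincar\'e lemma via Theorem \ref{th:dibar}(e) and Lemma \ref{lem:spsh}, whereas the paper simply attributes exactness of the resolution to Theorem \ref{th:dibarGlobal} (whose proof begins with precisely that local argument).
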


The groups $H^q(Z,\Omega^p)$ are classically called 
Dolbeault cohomology groups, although Dolbeault's opinion
was that they should in fact be called Grothendieck groups.

\begin{proof}
Let $\Escr^{p,q}$ denote the sheaf of fibrewise smooth $(p,q)$-forms 
on $Z=T\times X$ which are continuous in $t\in T$
(i.e.\ of class $\Cscr^{0,\infty}$).
Consider the sequence of homomorphisms of sheaves of abelian groups
\begin{equation}\label{eq:resolution}
	0\lra \Omega^p \longhookrightarrow \Escr^{p,0}
	\stackrel{d_0}{\lra} \Escr^{p,1} 
	\stackrel{d_1}{\lra} \Escr^{p,2} \stackrel{d_2}{\lra} 
	\cdots 
\end{equation}
where each $d_j$ is the $\dibar_{\!\!\Jscr}$ operator 
which equals $\dibar_{J_t}$ on $Z_t=(X,J_t)$ for every $t\in T$. 
%(The sequence terminates at $q=\dim_\C X$.)
By Theorem \ref{th:dibarGlobal} the sequence \eqref{eq:resolution} is exact.
All sheaves in \eqref{eq:resolution} except $\Omega^p$ 
are fine sheaves, % (they admit partitions of unity), 
so their cohomology groups of order $\ge 1$ vanish. 
(See e.g.\ \cite[Chapter VI]{GunningRossi1965} or 
\cite{Wells2008} for sheaf cohomology.) 
Hence, \eqref{eq:resolution} is an acyclic resolution of the 
sheaf $\Omega^p$. It follows that
\[
	H^q(Z,\Omega^p)= 
	\frac{
	\mathrm{Ker} \{d_{q}: \Gamma(Z,\Escr^{p,q}) \to 
	\Gamma(Z,\Escr^{p,q+1})\}}
	{\mathrm{Im} \{d_{q-1} : \Gamma(Z,\Escr^{p,q-1}) 
	\to \Gamma(Z,\Escr^{p,q})\}}
	= \frac{ 
	   \{\alpha\in \Dscr^{p,q}(Z): \dibar_{\!\!\Jscr} \alpha=0\}} 
	   {\{\dibar_{\!\!\Jscr}\beta: \beta\in \Dscr^{p,q-1}(Z)\}}.
\] 
Here, $\Gamma$ denotes the space of sections.
The group on the right hand side vanishes by Theorem \ref{th:dibarGlobal}.
\end{proof}

%
%	THE OKA PRINCIPLE FOR VECTOR BUNDLES
%
\section{The Oka principle for vector bundles on tame families of Stein manifolds} 
\label{sec:bundles}

Assume that $T$ is a topological space, $X$ is a smooth manifold, 
and $\Jscr=\{J_t\}_{t\in T}$ is a tame family of Stein structures on $X$. 
The notion of a $\Jscr$-holomorphic vector bundle on $Z=T\times X$ 
was introduced in Definition \ref{def:FHVB}. In this section, 
we prove the Oka principle for $\Jscr$-holomorphic vector bundles on 
tame families of Stein manifolds. 
We begin with line bundles. Denote by $\Pic(Z)$ the set of isomorphism 
classes of $\Jscr$-holomorphic line bundles on $Z=T\times X$. 
We have the following Oka principle which was proved
for line bundles on Stein manifolds (with $T$ a singleton)
by Oka \cite{Oka1939}.

%
%  The Oka principle for line bundles on families
%
\begin{theorem}\label{th:OPLB}
Assume that $T$ is a locally compact and paracompact Hausdorff space
and $\Jscr=\{J_t\}_{t\in T}$ is a tame family of class 
$\Cscr^{0,\infty}$ of Stein structures on a smooth manifold $X$. 
Then, every topological complex line bundle on $Z=T\times X$ 
is isomorphic to a $\Jscr$-holomorphic line bundle, 
and any two $\Jscr$-holomorphic line bundles on $Z$
which are topologically isomorphic are also
isomorphic as $\Jscr$-holomorphic line bundles.
Furthermore, $\Pic(Z)\cong H^2(Z,\Z)$.
\end{theorem}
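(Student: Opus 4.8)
The plan is to run the classical Oka argument for line bundles via the exponential sheaf sequence, using the Dolbeault vanishing $H^{q}(Z,\Oscr)=0$ $(q\ge 1)$ from Theorem~\ref{th:Dolbeault} in place of Cartan's Theorem~B. On $Z=T\times X$ let $\Oscr=\Omega^{0}$ denote the sheaf of germs of $\Jscr$-holomorphic functions and $\Oscr^{*}$ its subsheaf of germs of nowhere-vanishing ones, a sheaf of abelian groups under multiplication; likewise let $\Cscr^{0}$ be the sheaf of germs of continuous $\C$-valued functions on $Z$ and $\Cscr^{0,*}$ its subsheaf of nowhere-vanishing germs. First I would record the two exponential sequences
\begin{equation}\label{eq:expseq}
	0\lra \Z \longhookrightarrow \Oscr \xrightarrow{\ \exp(2\pi\imath\,\cdotp)\ } \Oscr^{*}\lra 0,
	\qquad
	0\lra \Z \longhookrightarrow \Cscr^{0} \xrightarrow{\ \exp(2\pi\imath\,\cdotp)\ } \Cscr^{0,*}\lra 0,
\end{equation}
and check that they are exact. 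The kernel of $f\mapsto \exp(2\pi\imath f)$ is the constant sheaf $\Z$, since a $\Z$-valued function that is fibrewise holomorphic (or merely continuous) is locally constant; surjectivity of $\exp$ as a morphism of sheaves reduces to the existence of local logarithms. For the latter, given a nowhere-vanishing $\Jscr$-holomorphic $g$ near a point $(t_{0},x_{0})$, one chooses a contractible coordinate ball $B\ni x_{0}$, so that $g(t,\cdotp)$ has a $J_{t}$-holomorphic logarithm on $B$, and pins down the branch by a continuous choice of $\log g(\cdotp,x_{0})$ for $t$ in a small neighbourhood of $t_{0}$; the continuous case is immediate. The inclusions $\Oscr\hookrightarrow\Cscr^{0}$ and $\Oscr^{*}\hookrightarrow\Cscr^{0,*}$ then give a morphism from the first sequence in \eqref{eq:expseq} to the second that is the identity on the subsheaf $\Z$.

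Next I would invoke the cohomological dictionary. By Definition~\ref{def:FHVB}, the isomorphism classes of $\Jscr$-holomorphic line bundles on $Z$ form the group $\Pic(Z)=H^{1}(Z,\Oscr^{*})$, whereas the isomorphism classes of topological complex line bundles on $Z$ form $H^{1}(Z,\Cscr^{0,*})$, and the map forgetting the holomorphic structure is the one induced by $\Oscr^{*}\hookrightarrow\Cscr^{0,*}$. (Here $Z=T\times X$ is paracompact, as already used in the proof of Theorem~\ref{th:Dolbeault}, so \v{C}ech and sheaf cohomology agree and both classifications are valid.) Taking the long exact cohomology sequences of \eqref{eq:expseq} and using naturality of the connecting homomorphisms, I obtain the commutative ladder
\[
\begin{CD}
	H^{1}(Z,\Oscr) @>>> H^{1}(Z,\Oscr^{*}) @>{\delta}>> H^{2}(Z,\Z) @>>> H^{2}(Z,\Oscr)\\
	@VVV @VVV @| @VVV\\
	H^{1}(Z,\Cscr^{0}) @>>> H^{1}(Z,\Cscr^{0,*}) @>{\delta'}>> H^{2}(Z,\Z) @>>> H^{2}(Z,\Cscr^{0}).
\end{CD}
\]
By Theorem~\ref{th:Dolbeault} with $p=0$, $H^{q}(Z,\Oscr)=H^{q}(Z,\Omega^{0})=0$ for all $q\ge 1$, so $\delta$ is an isomorphism; this already yields $\Pic(Z)\cong H^{2}(Z,\Z)$. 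Since $\Cscr^{0}$ is a fine sheaf on the paracompact space $Z$, we have $H^{q}(Z,\Cscr^{0})=0$ for $q\ge 1$, so $\delta'$ is an isomorphism as well.

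Because the central square commutes and $\delta,\delta'$ are isomorphisms, the forgetful map $\Pic(Z)=H^{1}(Z,\Oscr^{*})\to H^{1}(Z,\Cscr^{0,*})$ is an isomorphism. Its surjectivity gives that every topological complex line bundle on $Z$ is isomorphic to a $\Jscr$-holomorphic one, and its injectivity gives that any two $\Jscr$-holomorphic line bundles whose underlying topological bundles are isomorphic are isomorphic as $\Jscr$-holomorphic line bundles. Combined with $\Pic(Z)\cong H^{2}(Z,\Z)$, this establishes the three assertions of the theorem. I do not expect a genuine obstacle here: the sole analytic ingredient is the Dolbeault vanishing $H^{\ge 1}(Z,\Oscr)=0$ of Theorem~\ref{th:Dolbeault}, which is exactly where the tameness of $\Jscr$ enters, and everything after that is formal homological algebra. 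The points needing a modest amount of care are the construction of local $\Jscr$-holomorphic logarithms depending continuously on the parameter $t$ (so that \eqref{eq:expseq} is exact), the paracompactness of $Z=T\times X$ under the standing hypotheses (needed both for fineness of $\Cscr^{0}$ and for the cohomological classification of line bundles), and the routine identification of $\Pic(Z)$ and of the set of topological complex line bundles on $Z$ with the corresponding degree-one cohomology groups.
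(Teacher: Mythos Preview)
Your proposal is correct and follows exactly the approach indicated in the paper, which merely sketches the standard cohomological argument via the exponential sheaf sequence, invoking $H^{1}(Z,\Oscr)=H^{2}(Z,\Oscr)=0$ from Theorem~\ref{th:Dolbeault} and the identification $H^{1}(Z,\Oscr^{*})=\Pic(Z)$. Your write-up is in fact more detailed than the paper's own proof, and your flagged subtleties (continuous dependence of local $\Jscr$-holomorphic logarithms on $t$, and paracompactness of $Z=T\times X$) are precisely the points one must check to make the sketch rigorous.
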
 

The proof of this result follows the standard cohomological 
argument for the exponential sheaf sequence on $Z$, 
using that $H^1(Z,\Oscr)=0$ and $H^2(Z,\Oscr)=0$
(see Theorem \ref{th:Dolbeault}) and 
$H^1(Z,\Oscr^*)=\Pic(Z)$. We refer to
\cite[Sect.\ 5.2]{Forstneric2017E} for the classical
case of line bundles on Stein manifolds,
and to \cite[Theorem 2.3]{Forstneric2025dibar} 
for line bundles on families of open Riemann surfaces.

For vector bundles of arbitrary rank, we have the following Oka principle.
% under a stronger assumption on the parameter space as in Theorem \ref{th:Oka}.

%
% THE OKA PRINCIPLE FOR VECTOR BUNDLES
%
\begin{theorem}\label{th:OPvectorbundles} 
Assume that $T$, $\Jscr=\{J_t\}_{t\in T}$ and $X$
are as in Theorem \ref{th:Oka}, with $\Jscr$ of class $\Cscr^{0,\infty}$.
Then, every topological vector bundle on $Z=T\times X$ is isomorphic 
to a $\Jscr$-holomorphic vector bundle, and every 
pair of $\Jscr$-holomorphic vector bundles which are topologically
isomorphic are also isomorphic as $\Jscr$-holomorphic vector bundles.
\end{theorem}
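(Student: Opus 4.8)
The plan is to carry out the classical Oka--Grauert argument, using the results of this paper in place of the corresponding facts for a single Stein manifold. There are two statements to establish: \textbf{(I)} every topological complex vector bundle on $Z=T\times X$ is isomorphic to a $\Jscr$-holomorphic one, and \textbf{(II)} two $\Jscr$-holomorphic vector bundles on $Z$ that are topologically isomorphic are isomorphic as $\Jscr$-holomorphic bundles. Note that $Z$ is paracompact, being the product of the locally compact, $\sigma$-compact Hausdorff space $T$ with the manifold $X$, so topological vector bundles on $Z$ carry continuous Hermitian metrics and pullbacks along homotopic maps are isomorphic. Since the abelian argument used for line bundles in Theorem \ref{th:OPLB} (the exponential sheaf sequence together with $H^1(Z,\Oscr)=H^2(Z,\Oscr)=0$) has no direct higher-rank analogue, I would instead route everything through the Oka principle for maps (Theorem \ref{th:Oka}) and a section-level version of it.

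For \textbf{(I)}, I would present $E_0\to Z$, of rank $r$, as the image of an idempotent. Since $X$ has dimension $2n$ and $T$, being a local ENR, is locally finite-dimensional, $Z$ has locally finite covering dimension; assuming for the moment that it is finite-dimensional (the general case is recovered below), $E_0$ is a continuous direct summand of some trivial bundle $Z\times\C^N$, so there is a continuous map $p_0:Z\to\Ical_{r,N}$, where $\Ical_{r,N}=\{P\in M_N(\C):P^2=P,\ \mathrm{rank}\,P=r\}$, with $E_0$ isomorphic to the subbundle $z\mapsto p_0(z)(\C^N)$ of $Z\times\C^N$. The set $\Ical_{r,N}$ is a complex manifold on which $GL_N(\C)$ acts transitively by conjugation with isotropy $GL_r(\C)\times GL_{N-r}(\C)$; hence it is a complex homogeneous space, and therefore an Oka manifold (see \cite[Chap.\ 5]{Forstneric2017E}). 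Applying Theorem \ref{th:Oka} with $K=\varnothing$ and $Q=\varnothing$ to $p_0$ produces a homotopic $\Jscr$-holomorphic map $p_1:Z\to\Ical_{r,N}$, and its image $\{(z,v)\in Z\times\C^N:v\in p_1(z)(\C^N)\}$ is a $\Jscr$-holomorphic vector bundle on $Z$ topologically isomorphic to $E_0$, since homotopic idempotents have isomorphic images. For the general case one covers $T$ by finite-dimensional open sets (each again a $\sigma$-compact Hausdorff local ENR), applies the above over each, and splices the local $\Jscr$-holomorphic models over an exhaustion of $T$ using assertion \textbf{(II)} to match them on overlaps, by an induction patterned on the proof of Theorem \ref{th:OkaWeil}.

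For \textbf{(II)}, given $\Jscr$-holomorphic bundles $E,E'\to Z$ of rank $r$ admitting a topological isomorphism, I would form the $\Jscr$-holomorphic vector bundle $H:=\mathrm{Hom}(E,E')\cong E^*\otimes E'$ and, inside it, the open subset $W\subset H$ of homomorphisms that are invertible on every fibre. Over an open set $U\subset Z$ trivialising both $E$ and $E'$ one has $W|_U\cong U\times GL_r(\C)$, with transition maps given by $\Jscr$-holomorphic maps into $GL_r(\C)\times GL_r(\C)$ acting on the fibre by left and right multiplication; thus $W\to Z$ is a $\Jscr$-holomorphic fibre bundle whose fibre is the complex Lie group $GL_r(\C)$, which is Oka. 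A $\Jscr$-holomorphic section of $W$ is exactly a $\Jscr$-holomorphic isomorphism $E\to E'$, and a continuous section of $W$ is a topological one; so \textbf{(II)} is the statement that every continuous section of $W$ is homotopic, through continuous sections of $W$, to a $\Jscr$-holomorphic one. This is the section-level analogue of Theorem \ref{th:Oka}, and I would prove it by re-running the inductive scheme of the proof of Theorem \ref{th:Oka}: at the local step in $t$, transport $W|_{T_0\times\Omega}$ forward along the biholomorphisms $\Phi_t$ of Theorem \ref{th:Hamilton} and then, exactly as in the proof of Theorem \ref{th:OkaWeilB}, use Leiterer's stability theorem \cite[Theorem 2.7]{Leiterer1990} to replace the resulting $t$-family of $J_{t_0}$-holomorphic fibre bundles over varying domains by a single $J_{t_0}$-holomorphic fibre bundle over a fixed Stein domain, to which the parametric Oka principle for sections of holomorphic fibre bundles with Oka fibres over Stein manifolds \cite[Chap.\ 6]{Forstneric2017E} applies in place of Lemma \ref{lem:main}; the homotopies obtained over the various local pieces are then patched using cut-off functions in the homotopy parameters, just as in the proof of Theorem \ref{th:Oka}. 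The same argument also yields the relative and approximative refinements, with the isomorphism prescribed over a closed $Q\subset T$ and approximating a given one on a proper $\Jscr$-convex subset.

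The main obstacle is precisely this section-level Oka principle: checking that the argument of Theorem \ref{th:Oka}, combined with the transport-and-stabilisation device from the proof of Theorem \ref{th:OkaWeilB}, carries over verbatim to sections of $\Jscr$-holomorphic fibre bundles with Oka fibres, with all the requisite continuity in $t\in T$ preserved at each stage. No new idea beyond Sections \ref{sec:Hamilton}, \ref{sec:Oka} and \ref{sec:OkaWeil} should be needed, but the bookkeeping is nontrivial; a smaller, separate point is the finite-dimensionality reduction in \textbf{(I)}, which is routine but must be made explicit.
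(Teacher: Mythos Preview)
Your proposal is correct and takes essentially the same approach as the paper: for \textbf{(I)} the paper uses the Grassmannian $Gr_r(\C^N)$ rather than your idempotent space $\Ical_{r,N}$ as the classifying Oka manifold (these are closely related homogeneous spaces, and the paper does not address the finite-dimensionality point you raise), and for \textbf{(II)} the paper likewise forms the $GL_r(\C)$-fibre bundle $P=\delta(E,E')$ of isomorphisms and proves the section-level Oka principle by exactly the route you outline---Hamilton's theorem, Leiterer's stability, the parametric Oka principle for sections over a fixed Stein base, and the globalisation scheme from the proof of Theorem \ref{th:Oka}.
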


\begin{proof}
The proof of the first statement follows that of 
\cite[Theorem 2.4]{Forstneric2025dibar},
which gives an analogous result on families on open Riemann surfaces.
Let $Gr_r(\C^N)$ denote the Grassmann manifold of complex 
$r$-dimensional subspaces of $\C^N$, and let $\U\to Gr_r(\C^N)$
denote the universal bundle whose fibre over $\Lambda\in Gr_r(\C^N)$
consists of all vectors $v\in \Lambda\subset\C^N$.
Every topological vector bundle of rank $r$ on $Z$
is obtained as the pullback by a continuous map
$f:Z\to Gr_r(\C^N)$ of the universal bundle $\U$ for a sufficiently
big $N$; furthermore, homotopic maps induce isomorphic
vector bundles, and $\Jscr$-holomorphic maps induce  
$\Jscr$-holomorphic vector bundles. Since $Gr_r(\C^N)$
is a complex homogeneous manifold, and hence an 
Oka manifold, every continuous map $Z\to Gr_r(\C^N)$
is homotopic to a $\Jscr$-holomorphic map by 
Theorem \ref{th:Oka}. This proves the first part.

To prove the second statement, let $E\to Z$ and 
$E' \to Z$ be $\Jscr$-holomorphic vector bundles of rank $r$. 
There is an open cover $\{U_j\}_j$ of $Z$ 
and $\Jscr$-holomorphic vector bundle isomorphisms 
$\theta_j: E|{U_j} \to U_j\times\C^r$ and 
$\theta'_j: E'|{U_j}\to U_j\times\C^r$. Set $U_{i,j}=U_i\cap U_j$
and let
\[
    g_{i,j} : U_{i,j}  \to GL_r(\C), \quad g'_{i,j} : U_{i,j} \to GL_r(\C)
\]
denote the $\Jscr$-holomorphic transition maps of the two bundles, 
so that 
\[
    \theta_i \circ\theta_j^{-1}(z,v)= \bigl(z,g_{i,j}(z)v\bigr),
    \quad z\in U_{i,j},\ v\in\C^r,
\]
and likewise for $E'$. A complex vector bundle isomorphism 
$\Phi : E \to E'$ is given by a collection of complex vector bundle 
isomorphisms $\Phi_j: U_j\times\C^r\to U_j\times \C^r$ of the form
\[
    \Phi_j(z,v)=\bigl(z,\phi_j(z)v\bigr),
    \quad z\in U_j, \ v\in \C^r,
\]
with $\phi_j(z)\in GL_r(\C)$ for $z\in U_j$, 
satisfying the compatibility conditions
\begin{equation}\label{eq:bundle-P}
    \phi_i= g'_{i,j}\phi_j g_{i,j}^{-1} = g'_{i,j} \phi_j g_{j,i} 
    \quad {\rm on}\ U_{i,j}.
\end{equation}
Let $P=\delta(E,E') \to Z$ denote the $\Jscr$-holomorphic fibre bundle 
with fibre $GL_r(\C)$ and transition maps \eqref{eq:bundle-P},
so a collection of maps $\phi_j : U_j\to GL_r(\C)$ satisfying
\eqref{eq:bundle-P} is a section of $P$ over $Z$. 
Thus, complex vector bundle isomorphisms $E\to E'$ 
correspond to sections of $P\to Z$, with $\Jscr$-holomorphic
isomorphisms corresponding to $\Jscr$-holomorphic sections.
This reduces the problem to proving that every continuous section
$f:Z\to P$ is homotopic to a $\Jscr$-holomorphic section.

We proceed as in the proof of Theorem \ref{th:OkaWeilB}.
By Theorem \ref{th:Hamilton}, for every $t_0\in T$ and
smoothly bounded strongly pseudoconvex domain $\Omega\Subset X$ 
there are a neighbourhood $T_0\subset T$ of $t_0$ and a map 
$\Phi: T_0\times \Omega\to T_0\times X$ 
of class $\Cscr^{0,\infty}$ such that $\Phi(t,x)=(t,\Phi_t(x))$ and
$\Phi_t : \Omega \to \Phi_t(\Omega) \subset X$ 
is a $(J_t,J_{t_0})$-biholomorphism for every $t\in T_0$, 
with $\Phi_{t_0} = \operatorname{Id}_\Omega$. 
The push-forward bundles 
\[
	\wt E=\Phi_*(E|(T_0\times \Omega)),\quad 
	\wt E' = \Phi_*(E'|(T_0\times \Omega))
\]
are fibrewise $J_{t_0}$-holomorphic and depend continuously 
on $t\in T_0$. Choose a pair of strongly $J_{t_0}$-pseudoconvex domain 
$\Omega_1\Subset \Omega_2\Subset X$ such that 
\[
	T_0\times \overline \Omega_2\subset \Phi(T_0\times\Omega)
	\ \ \text{and}\ \ 
      T_0\times \overline \Omega_1\subset \Phi^{-1}(T_0\times\Omega_2).
\]
After shrinking $T_0$ around $t_0$, the stability 
theorem of Leiterer \cite[Theorem 2.7]{Leiterer1990} 
gives a family of $J_{t_0}$-holomorphic vector bundle isomorphisms 
$\Psi_t: \wt E_t \stackrel{\cong}{\longrightarrow} E_{t_0}$ 
over $\Omega_2$ (see \eqref{eq:Psit}) 
depending continuously on $t\in T_0$. 
We get similar isomorphisms 
$\Psi'_t: \wt E'_t \stackrel{\cong}{\longrightarrow} E'_{t_0}$ 
for the bundle $\wt E'$ over $T_0\times \Omega_2$. 
The upshot is that the vector bundles 
$\wt E|(T_0\times \Omega_2)$ and $\wt E'|(T_0\times \Omega_2)$ are 
fibrewise $J_{t_0}$-isomorphic to the trivial (independent of $t$) 
extensions of the vector bundles 
$E_{t_0}|\Omega_2$ and $E'_{t_0}|\Omega_2$, respectively.
In this local picture, a topological isomorphism
$E\to E'$ is given by a family of topological isomorphisms
$E_{t_0}|\Omega_2 \stackrel{\cong}{\longrightarrow} E'_{t_0}|\Omega_2$
depending continuously on $t\in T_0$, and a $\Jscr$-holomorphic 
isomorphism $E\to E'$ is given by a family of $J_{t_0}$-holomorphic isomorphisms. % depending continuously on $t\in T_0$.
Such isomorphisms correspond to sections of a $J_{t_0}$-holomorphic
fibre bundle $H\to \Omega_2$ with fibre $GL_r(\C^N)$ defined as above,
see \eqref{eq:bundle-P}.

By the parametric Oka principle for sections of holomorphic
fibre bundles with Oka fibres over Stein manifolds, 
a family of topological sections of $H\to \Omega_2$ is isomorphic
to a family of holomorphic sections, with approximation on compact 
holomorphically convex subsets of $\Omega_2$.
Going back to the original vector bundles $E$, $E'$
and $P=\delta(E,E')$, we see that
any continuous section of $P$ is homotopic over 
$T_0\times \Omega_1$ to a $\Jscr$-holomorphic 
section, with approximation on a $\Jscr$-convex subset where 
the section is already holomorphic. The globalisation scheme
in the proof of Theorem \ref{th:Oka} then applies and shows that 
every continuous section of $P\to Z$ is homotopic to a 
$\Jscr$-holomorphic section.
\end{proof}

\section{Open problems}\label{sec:problems}

In this final section we collect some open problems for future
investigation. The first problem of technical nature is related to the stability
of canonical solutions of the $\dibar$-equation.

\begin{problem}\label{prob:dibar} 
Does Theorem \ref{th:dibar} have an analogue with smooth 
dependence of solutions on the parameter $t\in T$ when 
$T$ is a smooth manifold and the family of complex structures
$\Jscr=\{J_t\}_{t\in T}$ is smooth in $(t,x)\in T\times X$?
\end{problem}

%
% Franc: the following paragraph was added on Oct. 14.
%
We are not aware of results in the literature concerning Problem \ref{prob:dibar}, 
except when $X$ is a surface (see \cite{Forstneric2024Runge,Forstneric2025dibar}).
An affirmative answer would give a similar generalisation of 
the parametric Hamilton's theorem (see Theorem \ref{th:Hamilton}), 
and hence of all our main results. The corresponding 
analogue of Theorem \ref{th:OkaWeil} %and its corollaries %in Sect.\ \ref{sec:Oka} 
would show that if $\Jscr=\{J_t\}_{t\in T}$ is a smooth tame family of Stein structures 
then the manifold $Z=T\times X$, with the complex structure $J_t$ 
on $\{t\}\times X$ for every $t\in T$, is a {\em Cartan manifold} 
in the sense of Jurchescu \cite[Sect.\ 6]{Jurchescu1988RRMPA}; 
see also the discussion and references in \cite[Remark 1.2]{Forstneric2024Runge}. 
Cartan manifolds are analogues of Stein manifolds in 
the category of smooth CR manifolds with integrable
complex tangent subbundle. 
For real analytic Cartan manifolds with CR codimension one,
%embedded in a Euclidean space, 
the function theory and the Oka principle for vector bundles 
%(see Theorem \ref{th:OPvectorbundles}) 
were treated by Mongodi and Tomassini  
\cite{MongodiTomassini2016,MongodiTomassini2019}.

\begin{comment}
We are not aware of results in the literature concerning Problem \ref{prob:dibar}, 
except when $X$ is a surface (see \cite{Forstneric2024Runge,Forstneric2025dibar}).
The extant generalisations of Kohn's work on the $\dibar$-Neumann
problem mainly deal with lowering the boundary regularity of the domain
and understanding the behaviour of the $\dibar$-Neumann
operator on Levi degenerate domains,
which is irrelevant for the problems considered in this paper.
\end{comment}

Our main result, Theorem \ref{th:Oka}, shows that 
tame families $\Jscr$ of smooth Stein structures on a given manifold $X$
admit many $\Jscr$-holomorphic maps to any Oka manifold. 
Theorem \ref{th:OkaWeil} gives a similar result for functions
with more general parameter spaces.
Which additional properties can these maps have?
The following problem is of particular interest; 
see \cite[Theorem 2.4.1]{Forstneric2017E}
for the summary of the classical results for Stein manifolds
and references to the original papers.
% in the case of a single Stein manifold.

\begin{problem}\label{prob:embedding}
Assume that $T$, $X$, and $\Jscr$ are as in Theorem \ref{th:OkaWeil}.
Is there a $\Jscr$-holomorphic map $F:Z=T\times X\to \C^N$
for a suitable $N\in\N$ such that for every $t\in T$
the $J_t$-holomorphic map $F(t,\cdotp):X\to\C^N$ is proper, 
an immersion, an embedding? In particular, taking $N=2\dim_\C X+1$,
is there an $F$ such that  $F(t,\cdotp):X\to\C^N$ is a
proper $J_t$-holomorphic embedding for every $t\in T$?
\end{problem}

The Oka principle in Theorem \ref{th:Oka} only pertains to 
maps to Oka manifolds. In light of the classical results for 
a single Stein manifold (see \cite[Theorem 5.4.4]{Forstneric2017E}),
the following is a natural question.

\begin{problem}\label{prob:fibrebundles}
Let $T$, $X$ and $\Jscr$ be as in Theorem \ref{th:Oka},
and let $E\to Z=T\times X$ be a topological $\Jscr$-holomorphic 
%(fibrewise holomorphic) 
fibre bundle with an Oka fibre. Does the Oka principle 
hold for sections $Z\to E$?
\end{problem}

%A special case when Problem \ref{prob:fibrebundles} has an affirmative answer are the principal fibre bundles considered in the proof of Theorem \ref{th:OPvectorbundles}. 

We expect that this holds true, but the proof would require a 
suitable reworking of all basic tools used in the proof of the Oka
principle for a single Stein manifold; see \cite[Chap.\ 5]{Forstneric2017E}. 

\medskip 
\noindent {\bf Acknowledgements.} 
Forstneri\v c is supported by the European Union (ERC Advanced grant HPDR, 101053085) and grants P1-0291 and N1-0237 from ARIS, Republic of Slovenia. 
Sigurðardóttir is supported by a postdoctoral fellowship from 
the Institute of Mathematics, Physics and Mechanics, Ljubljana.
The first named author wishes to thank Yuta Kusakabe and Finnur L\'arusson 
for helpful discussions on some of the topics of this paper.

%%%%%%%%%% 
%%%%%%%%%% 
%%%%%%%%%% 
%%%%%%%%%% THE BIBLIOGRAPHY 
%%%%%%%%%% 
%%%%%%%%%% 

%{\bibliographystyle{abbrv} \bibliography{references}} 
%\begin{comment}

%\end{comment}

\end{document}